\newcommand{\bm}[1]{\mbox{\boldmath $#1$}}
\newcommand{\str}[1]{\rule{0mm}{#1mm}}
\DeclareMathAlphabet{\mathonebb}{U}{bbold}{m}{n}
\newcommand{\1}{\ensuremath{\mathonebb{1}}}
\def\Es{{\mathbb{E}}}
\def\E{{\mathbb{E}_s}}
\def\IL{{\mathbb{L}}}
\def\N{{\mathbb{N}}}
\def\R{{\mathbb{R}}}
\def\P{{\mathbb{P}}}
\def\II{{\mathcal I}}
\def\JJ{{\mathcal J}}
\def\cP{\mathcal P}
\def\A{{\mathscr A}}
\def\B{{{\mathscr B}}}
\def\CC{\mathscr C}
\def\D{{\mathscr{D}}}
\def\FF{{\mathscr{F}}}
\def\GG{{\mathscr{G}}}
\def\LL{{\mathscr L}}
\def\X{{\mathscr{X}}}
\newcommand{\bs}[1]{\boldsymbol{#1}}
\def\gh{\mathbf{h}}
\def\gs{{\mathbf{s}}}
\def\gt{{\mathbf{t}}}
\def\gT{{\mathbf{T}}}
\def\w{\mathbf{w}}
\def\gX{{\bs X}}
\def\g0{{\mathbf{0}}}
\def\gup{\boldsymbol{\Upsilon}}
\def\eps{{\varepsilon}}
\def\<{{\langle}}
\def\>{{\rangle}}
\newcommand{\eref}[1]{(\ref{#1})}
\newcommand{\pa}[1]{\left({#1}\right)}
\newcommand{\norm}[1]{\left\|{#1}\right\|}
\newcommand{\cro}[1]{\left[{#1}\right]}
\newcommand{\ab}[1]{\left|{#1}\right|}
\newcommand{\ac}[1]{\left\{{#1}\right\}}
\newtheorem{thm}{Theorem}
\newtheorem{lem}{Lemma}
\newtheorem{prop}{Proposition}
\newtheorem{cor}{Corollary}
\newtheorem{defi}{Definition}
\newtheorem{remark}{Remark}
\def\1{1\hskip-2.6pt{\rm l}}
\begin{document}
\title{rho-estimators for shape restricted density estimation}
\author{Y. Baraud}
\address{Univ. Nice Sophia Antipolis, CNRS,  LJAD, UMR 7351, 06100 Nice, France.}
\email{baraud@unice.fr}
\author{L. Birg\'e}
\address{Sorbonne Universit\'es, UPMC Univ.\ Paris 06, CNRS UMR 7599, LPMA, 75005 Paris, France.}
\email{lucien.birge@upmc.fr}
\date{\today}
\begin{abstract}
The purpose of this paper is to pursue our study of $\rho$-estimators built from i.i.d.\ observations that we defined in
Baraud~{\it et al.}~\citeyearpar{Baraud:2014qv}. For a $\rho$-estimator based on some 
model $\overline{S}$ (which means that the estimator belongs to $\overline{S}$) and a true distribution of the observations that also belongs to $\overline{S}$, the risk (with squared Hellinger loss) is bounded by a quantity
which can be viewed as a dimension function of the model and is often related to the ``metric dimension'' of this model, as defined in Birg\'e~\citeyearpar{MR2219712}. This is a minimax point of view and it is well-known that it is pessimistic. Typically, the bound is accurate for most points in the model but may be very pessimistic when the true distribution belongs to some specific part of it. This is the situation that we want to investigate here. For some models, like the set of decreasing densities on $[0,1]$, there exist specific points in the model that we shall call {\em extremal} and for which the risk is substantially smaller than the typical 
risk. Moreover, the risk at a non-extremal point of the model can be bounded by the sum of the risk bound at a well-chosen extremal point plus the square of its distance to this point. This implies that if the true density is close enough to an extremal point, the risk at this point may be smaller than the minimax risk on the model and this actually remains true even if the true density does not belong to the model. The result is based on some refined bounds on the suprema of empirical processes that are established in Baraud~\citeyearpar{Baraud:2014kq}.
\end{abstract}
\maketitle
\section{Introduction}
The present paper pursues the study of $\rho$-estimation, introduced in Baraud~{\it et al.}~\citeyearpar{Baraud:2014qv}, as a versatile estimation strategy based on models. We want here to explain some specific property of these estimators that we shall call {\em superminimaxity}, a study which was motivated by a conference that Adityanand Guntuboyina gave in Cambridge in June 2014. His talk was about Gaussian regression but we shall deal here with density estimation. Given $n$ i.i.d.\ observations $X_{1},\ldots,X_{n}$ with an unknown density $s$ with respect to some reference measure $\mu$ and an estimator $\widehat{s}(X_{1},\ldots,X_{n})$ of $s$, we measure its performance using the loss function $h^{2}(s,\widehat{s})$ where $h$ is the Hellinger distance. We shall focus here on $\rho$-estimators and some of their properties that lead to superminimaxity. 

The first of these properties is {\em robustness}. There exist various notions of robustness:  robustness to model contamination, robustness to possible outliers, etc.\ --- see Huber~\citeyearpar{MR606374} for some illustrations ---. In some of these cases, the problem can be formulated in the following way. If we know the performance of an estimator when the true density $s=\overline s$ belongs to a model $\overline S$, how does it deteriorate when $s$ is actually of the form $(1-\eps)\overline s+\eps t$ for some small $\eps\in (0,1)$ and an arbitrary density $t\neq \overline s$, that is, when a proportion $\eps$ of the data actually corresponds to a sample of density $t$ and not $\overline s$. Since for such a density $s$ one can check that $h^{2}(s, \overline s)\le \eps$, it is natural to wonder what happens to the risk of the estimator not only when $s$ is a mixture of the form $(1-\eps)\overline s+\eps t$ as before but also, more generally, when it belongs to a small Hellinger ball around $\overline s$, which leads to the notion of {\em robustness with respect to Hellinger deviation}s that we shall use here. 

To illustrate the problem of contamination, assume that we choose as our statistical model $\overline{S}$ for the unknown density $s$ the set of uniform densities on $[0,\theta]$ with $0<\theta\le10$, in which case the MLE (maximum likelihood estimator) is the uniform density $\widehat{s}$ on $\left[0,X_{(n)}\right]$, where $X_{(n)}$ is the largest observation, with a risk $\E\!\left[h^{2}(s,\widehat{s})\right]$ bounded by $C/n$ when $s$ belongs to $\overline{S}$, $\Es_{t}$ denoting the expectation when the true density is $t$. But what if the true density $s$ does not belong to $\overline{S}$? Unfortunately, the situation may become quite different. If $s$ is the mixture $s_{0}=(1-1/n)\1_{[0,1]}+(1/n)(\1_{[9,10]})$ for some $n$ larger than 100, it is easy to check that, with probability of order $1-e^{-1}\approx 0.63$, at least one of the $X_{i}$ will be larger than 9 and the MLE will be the uniform distribution on $\left[0,X_{(n)}\right]$ with $X_{(n)}>9$, which is a terrible estimator of $s=s_{0}$, although the model $\overline{S}$ is quite good since the Hellinger distance between $s$ and $\overline S$ is not larger than $1/\sqrt{n}$. 

The previous example shows that the MLE is definitely not robust in our sense since it may be very sensitive to small deviations from the model on the contrary to $\rho$-estimators. To be more precise, let us consider some model of densities $\overline{S}$ and a $\rho$-estimator $\widehat{s}$ based on $\overline{S}$ with a risk function on $\overline{S}$ bounded by $R(\overline{s},n)$, that is,
\begin{equation}
\Es_{\overline{s}}\left[h^{2}(\overline{s},\widehat{s})\right]\le R(\overline{s},n)\quad\mbox{for all }\overline{s}\in\overline{S}.
\label{eq-localrisk}
\end{equation}
The robustness of $\rho$-estimators can be expressed by the following property, proven in Baraud~{\it et al.}~\citeyearpar{Baraud:2014qv}: whatever the density $s$,
\begin{equation}\label{eq-robust3}
\E\!\left[h^{2}(s,\widehat{s})\right]\le C_{0}\left[R(\overline{s},n)+h^{2}(s,\overline{s})\right]\quad\mbox{for all }\overline{s}\in\overline{S},
\end{equation}
where $C_{0}$ is a universal positive constant. This is a fondamental property of $\rho$-estimators for the following reasons: if $s$ is quite close to a simple density $\overline s$ in $\overline S$ which can be estimated with a small risk bound $R(\overline s,n)$, the $\rho$-estimator will essentially behave as if the true density were $\overline s$ and the risk bound at $s$ will be that at $\overline s$ plus a small additional term that can be viewed as a squared bias. Intuitively, a $\rho$-estimator based on a sample with density $s$ and a $\rho$-estimator based on a sample with density $\overline s$ will remain close. In our parametric example based on uniform distributions, everything happens as if the $\rho$-estimator only considered the data with values in $[0,1]$ and ignored the others. Consequently, its risk remains of order $1/n$ even when $s=s_{0}$ instead of $s=\overline s=1_{[0,1]}$. This notion of robustness is quite flexible and shows that the risk of the estimator does not deteriorate much in a small Hellinger neighbourhood of any point $\overline s$ of the model. 

For many well-chosen models $\overline{S}$, the risk can be uniformly bounded on $\overline{S}$:
\begin{equation}
\sup_{\overline{s}\in\overline{S}}R(\overline{s},n)\le R(\overline{S},n),
\label{eq-robust}
\end{equation}
which corresponds to the minimax point of view, so that (\ref{eq-robust}) leads to
\begin{equation}\label{eq-robustG}
\E\!\left[h^{2}(s,\widehat{s})\right]\le C_{0}\left[R(\overline{S},n)+h^{2}(s,\overline{S})\right]\quad\mbox{whatever the density }s.
\end{equation}
It turns out that for some models $\overline{S}$, there exists a subset $V$ of $\overline{S}$ such that the risk bounds $R(\overline s,n)$ are substantially smaller than $R(\overline{S},n)$ for all 
$\overline s\in V$. This is what we call {\em superminimaxity}. Although there exists some analogy in the denomination, the notion is quite different from the one of superefficiency as described in the famous counterexample of Hodges and the Theorem of Le Cam about points of superefficiency, apart from the fact that it deals with the property that estimation is faster at some points. However superefficiency is an asymptotic property at a point, while superminimaxity on $V$ is definitely nonasymptotic and defined for a given value of the number $n$ of observations. For a detailed study of superefficiency, one could look at the paper by Brown, Low, and Zhao~\citeyearpar{MR1604424}. Moreover, superminimaxity on $V$ together with the robustness of $\rho$-estimators has the following consequence: if $s$ is either in $V$ or close enough to it, the risk bound at $s$,
\[
\E\!\left[h^{2}(s,\widehat{s})\right]\le C_{0}\inf_{\overline{s}\in V}\left[R(\overline{s},n)+h^{2}(s,\overline{s})\right],
\]
may be substantially smaller than the typical risk bound~\eref{eq-robustG} leading to superminimaxity at $s$. It is actually the combination of the robustness of $\rho$-estimators and the existence of local risk bounds of the form (\ref{eq-localrisk}) that lead to this phenomenon as also described, in a different framework, by Chatterjee {\em et al.}~\citeyearpar{chatterjee2015}, a paper that strongly influenced our research in this direction. 

Showing that the risk $\Es_{\overline{s}}\left[h^{2}(\overline{s},\widehat{s})\right]$ at some particular points $\overline s$ can be bounded from above by some quantity $R(\overline{s},n)$ which is of smaller order than the global minimax risk over $\overline S$ requires some specific probabilistic tools that have been established in Baraud~\citeyearpar{Baraud:2014kq}. These tools allow to bound the expectation of the supremum of an empirical process over the neighbourhood of an element $\overline s\in\overline S$ by some quantity which is of smaller order than that one could get by using the global entropy of the class $\overline S$ as, for example, in van de Geer~\citeyearpar{MR1212164}.

The existence of points in the model on which the estimator is superminimax was already noticed for the Grenander estimator of a non-increasing density --- see Grenander~\citeyearpar{MR599175} and Groeneboom~\citeyearpar{MR822052} --- on an interval $[a,+\infty)$ with a known value of $a$. It is shown in Birg\'e~\citeyearpar{MR1026298} that the $\IL_{1}$-risk of the Grenander estimator of a non-increasing piecewise constant density based on at most $D$ intervals is bounded by $c\sqrt{D/n}$, for some positive universal constant $c$, and can therefore be of smaller order than the typical risk for non-increasing densities which is of order  $n^{-1/3}$. We shall see below that, for the same estimation problem, the $\rho$-estimator will perform similarly (up to possible logarithmic factors) with the same superminimaxity property on piecewise constant densities. Moreover, the $\rho$-estimator does not need to know $a$ and is robust with respect to the Hellinger distance. 

The case of monotone densities on $[a,+\infty)$ is far from unique. There are many other examples of families $\overline S$ of densities for which one can find a subset $V$ of $\overline S$ on which the rates of convergence of the $\rho$-estimator are faster than the rate at a ``typical" point of $\overline S$. Moreover, it happens that the set $V$ often possesses good approximation properties with respect to the much larger space $\overline S$. These approximation properties combined with the robustness of $\rho$-estimators as expressed by \eref{eq-robust3} allow to derive non-asymptotic minimax risk bounds over large  subsets of $\overline S$. Such sets are possibly non-compact and therefore neither possess a finite metric dimension nor a finite entropy. 

In view of illustrating this superminimaxity phenomenon, we shall consider in the present paper 
models of densities $\overline S$ defined by some shape constraints, namely piecewise monotone, piecewise convex or concave and log-concave densities. There is a large amount of literature dealing with these density models and we shall content ourselves to mention a few references only and refer the reader to the bibliography therein. For monotone densities we refer to the books by Groeneboom and Wellner~\citeyearpar{MR1180321} and van de Geer~\citeyearpar{MR1739079}. For the estimation of a convex density, we mention Groeneboom {\it et al.}~\citeyearpar{MR1891742} and refer to the papers of Doss and Wellner~\citeyearpar{Doss-Wellner}, D\"umbgen and Rufibach~\citeyearpar{MR2546798}, Cule and Samworth~\citeyearpar{MR2645484} for the estimation of a log-concave density. In the regression setting, let us mention Guntuboyina and Sen~\citeyearpar{MR3405621} for estimating a convex regression function and Chatterjee {\it et al.}~\citeyearpar{chatterjee2015} for the isotonic regression. Recently,
Bellec~\citeyearpar{Bellec:2015qf} extended the results of Chatterjee {\it et al.}~\citeyearpar{chatterjee2015} about the properties of least-squares estimators over convex polyhedral cones in the homoscedastic Gaussian regression framework, to general closed convex subsets of $\R^{n}$ from which he also derived some results of superminimaxity in this specific Gaussian framework. In these two papers the results are restricted to convex models. As opposed, convexity does not play any special role in our presentation and the models we shall use here are not necessarily convex which allows us to deal with more general shape constraints like piecewise monotonicity or log-concavity.

The paper is organised as follows. The statistical setting, main notations and conventions as well as a brief reminder of what a $\rho$-estimator is in the density estimation framework can be found in Section~\ref{sect-SS}. The introductory example of the model of monotone densities in Section~\ref{Mon} gives a first flavour of the results we establish all along the paper. The main result can be found in Section~\ref{sect-main} and its applications to different density models (piecewise constant, piecewise monotone, piecewise convex-concave and log-concave densities) are detailed in Section~\ref{sect-appli}. The problem of model selection is addressed in Section~\ref{sect-MS} and Section~\ref{sect-proof} is devoted to the proofs.

\section{The statistical setting}\label{sect-SS}
Let $(\X,\A)$ be a measurable set, $\mu$ a $\sigma$-finite measure on $(\X,\A)$, $\cP_{\!\mu}$ 
the set of all probabilities on $(\X,\A)$ which are absolutely continuous with respect to $\mu$. We shall denote by $\LL$ the set of real-valued functions from $\X$ to $\R$ and $\LL_\mu$ the subset of $\LL$ consisting of those functions $t\ge 0$ satisfying  
$\int t\,d\mu=1$, that is the set of probability densities with respect to $\mu$. An element 
of $\cP_{\!\mu}$ with density $t\in\LL_{\mu}$ will be denoted by $P_t$. We turn $\cP_{\!\mu}$ into a metric space via the Hellinger distance $h$. We recall from Le Cam~(\citeyear{MR0334381} or \citeyear{MR856411}) that the Hellinger distance between 
two elements $P$ and $Q$ of $\cP_{\!\mu}$ is given by
\[
h(P,Q)=\cro{{1\over 2}\int_{\X}\pa{\sqrt{dP/d\mu}-\sqrt{dQ/d\mu}}^{2}d\mu}^{1/2}.
\]
For $P_{t},P_{u}\in\cP_{\!\mu}$ with $t,u\in\LL_{\mu}$ we shall write $h(t,u)$ for $h(P_t,P_u)$.

We observe $n$ i.i.d.\ random variables $X_{1},\ldots,X_{n}$ with values in $(\X,\A)$ and distribution $P_{s}$ for some density $s\in\LL_{\mu}$.  Although $s$ might not be uniquely defined in $\LL_{\mu}$ as a density with respect to $\mu$ of the distribution of the observations, we shall refer to $s$ as ``the" density of $P_{s}$ for simplicity. 
To avoid trivialities, we shall always assume, in the sequel, that $n\ge3$, so that $\log n>1$.
%
%
The estimators $\widehat{s}$ that we shall consider here will be based on {\it models} for $s$ defined as follows.
\begin{defi}
A density model, or a model (for short), is a subset  of $\LL_{\mu}$ for which there exists an at most countable subset $S\subset \overline S$ such that $\{P_{t},\ t\in S\}$ is dense in $\{P_{t},\ t\in \overline S\}$  with respect to the Hellinger distance. We shall then say that $S$ is dense in $\overline S$ and that $\overline S$ is separable with respect to the Hellinger distance. 
\end{defi}
A density model $\overline S$ should be chosen so that the corresponding probability model $\{P_{t},\ t\in \overline S\}$ approximates the true distribution $P_s$ (with respect to the Hellinger distance). The model $\overline S$ may or may not contain $s$. Of course a model $\overline S$ is good only if the distance $h\left(s,\overline{S}\right)$ is not too large, where we set, for $A\subset\LL_\mu$, $h(t,A)=\inf_{u\in A}h(t,u)$. Our aim in this paper is to study the performance of a $\rho$-estimator $\widehat s$ of $s$ built on $\overline S$. The definition and properties of $\rho$-estimators have been described in great details in Baraud~{\it et al.}~\citeyearpar{Baraud:2014qv} and  we only give below a brief account of what a $\rho$-estimator is.

\subsection{What is a $\rho$-estimator?}\label{sect-RhoEst}
In the context of density estimation based on i.i.d.\ variables, which is the one we consider here, a $\rho$-estimator provides a robust (in our sense) and an (almost) rate optimal estimator over a model $\overline S$ of densities in all cases we know. In order to avoid long developments we restrict ourselves to its construction in the specific situations we shall encounter here, namely when the observations are i.i.d.

Let $\psi$ be the increasing function from $[0,+\infty]$ onto $[-1,1]$ defined by
\[
\psi(u)={u-1\over \sqrt{1+u^{2}}}\quad\mbox{for }u\in [0,+\infty)\qquad\mbox{and}\qquad\psi(+\infty)=1.
\]
Given a model $\overline S$ of densities on $(\X,\A,\mu)$ and a countable and dense subset $S$ of $\overline S$, a $\rho$-estimator $\widehat s$ of the density $s$ on $\overline S$ is defined in the following way. For densities $t,t'\in\LL_\mu$ we set
\[
\gT(\gX,t,t')={n\over 2}\cro{h^{2}\pa{t,{t+t'\over 2}}-h^{2}\pa{t',{t+t'\over 2}}}+{1\over \sqrt{2}}\sum_{i=1}^{n}\psi\pa{\sqrt{t'\over t}(X_{i})}
\]
and define $\widehat s$ as any (measurable) element of the closure of the set
\begin{equation}\label{def-ens}
\left\{t\in S\,\left|\,\gup(S,t)\le \inf_{t'\in S} \gup(S,t')+35.7\right.\right\}\quad\mbox{with}\quad\gup(S,t)=\sup_{t'\in S}\gT(\gX,t,t').
\end{equation}
In the calculation of $\gT(\gX,t,t')$, which involves the ratio $t'/t$, we use the convention $0/0=1$ and $a/0=+\infty$ for $a>0$. The constant 35.7 in~\eref{def-ens} has only been chosen for convenience in the calibration of the numerical constants in the original paper Baraud~{\it et al.}~\citeyearpar{Baraud:2014qv} and can be replaced by any positive number. It is clear from the construction that, given a model $\overline S$, there is not a unique $\rho$-estimator on $\overline S$. However, the risk bounds we derived in Baraud~{\it et al.}~\citeyearpar{Baraud:2014qv} are valid for any version of these $\rho$-estimators.

\subsection{Notations, conventions and definitions}\label{sect-conv}
We set $\log_+(x)=\max\{\log x,1\}$, $\N^*=\N\setminus\{0\}$, $a\vee b=\max(a,b)$, $a\wedge b=\min(a,b)$
and, for $x\in \R_{+}$, $\lceil x\rceil=\inf\{n\in\N,\ n\ge x\}$; $|A|$ denotes the cardinality of the finite set $A$ and $C,C',\ldots$ numerical constants that may vary from line to line.  For a function $f$ on $\R$, $f(x+)$ and $f(x-)$ denote respectively the right-hand and left-hand limits of $f$ at $x$ 
whenever these limits exist. We shall also use the following conventions: $\sum_{\varnothing}=0$, $x/0=+\infty$ if $x>0$, $x/0=-\infty$ if $x<0$ and $0/0=1$. 
%
\begin{defi}\label{D-part}
A partition of the open interval $(a,b)$ ($-\infty\le a<b\le+\infty$) of size $k+1$ with $k\in\N$ is either $\varnothing$ when $k=0$ or a finite set $\II=\{x_1,\ldots, x_k\}$ of real numbers with $a<x_1<x_2<\cdots<x_k<b$ if $k\ge1$. We shall call {\em endpoints} of the partition $\II$ the numbers $x_j$, $1\le j\le k$, and {\em intervals} of the partition the {\em open} intervals $I_j=(x_j,x_{j+1})$, 
$0\le j\le k$ with $x_0=a$ and $x_{k+1}=b$. A partition $\II$ will also be identified to the set of its intervals and we shall equally write $\II=\{I_0,\ldots, I_k\}$ or $\II=\{x_1,\ldots, x_k\}$. 
\end{defi}
The set of all partitions of $\R$ with $k$ endpoints or $k+1$ intervals is denoted by $\JJ(k+1)$ and the length of $I_j$ by $\ell(I_j)$. If $\II=\{x_1,\ldots, x_k\}$ and $\II'=\{x'_1,\ldots, x'_{k'}\}$, $\II\vee\II'=\{x_1,\ldots, x_k\}\cup\{x'_1,\ldots, x'_{k'}\}$ and $\II\succeq\II'$ means that $\{x_1,\ldots, x_k\}\supset\{x'_1,\ldots, x'_{k'}\}$.

\section{Monotone densities}\label{Mon}
In view of illustrating the main result of this paper to be presented in Section~\ref{sect-main}, let us consider the example of the model $\overline S$ consisting of all the densities with respect to the Lebesgue measure $\mu$ that are non-increasing on some arbitrary interval of $\R$ which is open on its left end and vanish elsewhere. In this case $(\X,\A)=(\R,\B(\R))$ and $\overline S$ is the set of all densities of the form $t=f\1_{(\overline{x},+\infty)}$ with $\overline{x}\in\R$ and $f$ is a non-increasing and non-negative function on $(\overline{x},+\infty)$ (which may be unbounded in the neighbourhood of $\overline{x}$) such that $\int_{\overline{x}}^{+\infty}f(x)\,dx=1$. The results we get below would be similar for the set of all densities which are non-decreasing on some interval of $\R$ and vanish elsewhere. 

For $D\in\N^*$ we define $\overline V(D)$ to be the set of all densities of the form $\sum_{j=1}^Da_j\1_{(x_j,x_{j+1}]}$ with $\II=\{x_1,\ldots, x_{D+1}\}\in\JJ(D+2)$ and $a_j\ge0$ for $1\le j\le D$. Note that the densities in $\overline V(D)$ take the value 0 on the two unbounded extremal intervals $I_0$ and $I_{D+1}$ of the partition $\II$. For instance, $\overline V(1)$ corresponds to the family of uniform densities on intervals, that is 
\[
\overline V(1)=\left\{t(\cdot)=\theta_{1}^{-1}\1_{(0,\theta_1]}(\cdot-\theta_{0}),\ \theta_{1}>0,\ \theta_{0}\in\R\right\}.
\] 
In such a situation, we can prove the following result.
\begin{thm}\label{thm-mono}
Any $\rho$-estimator $\widehat s$ on $\overline S$, as defined in Section~\ref{sect-RhoEst}, satisfies
\begin{equation}\label{eq-1}
C\E\!\cro{h^{2}(s,\widehat s)}\le\inf_{D\ge1}\cro{h^{2}\!\left(s,\overline V(D)\cap\overline S\right)
+{D\over n}\log_+^{3}\!\pa{n\over D}}\quad\mbox{for all }s\in\LL_{\mu}
\end{equation}
and some universal constant $C\in(0,1]$.
\end{thm}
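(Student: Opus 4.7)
The plan is to decompose the argument into a \emph{robustness step} and a \emph{superminimaxity step}. First I would apply the robustness inequality \eref{eq-robust3} of $\rho$-estimators, which reduces the problem to finding, for each $\overline s\in\overline V(D)\cap\overline S$, a local risk bound of the form $R(\overline s,n)\le CDn^{-1}\log_+^3(n/D)$. Granted such a bound, inserting it into \eref{eq-robust3}, taking the infimum over $\overline s\in\overline V(D)\cap\overline S$ (this produces the approximation term $h^{2}(s,\overline V(D)\cap\overline S)$) and finally the infimum over $D\ge1$ yields \eref{eq-1} after absorbing the multiplicative constants into a single $C\in(0,1]$.

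The bulk of the proof is therefore the local risk bound at a piecewise constant non-increasing density $\overline s\in\overline V(D)\cap\overline S$. I would invoke the main theorem of Section~\ref{sect-main}, whose hypotheses reduce to controlling the expected supremum of certain empirical processes (built from $\gT(\gX,\cdot,\cdot)$ and the function $\psi$) indexed by Hellinger balls of $\overline S$ centred at $\overline s$. The required control is exactly what the refined inequalities of Baraud~\citeyearpar{Baraud:2014kq} deliver: the supremum can be bounded by a \emph{local} metric-dimension functional of $\overline S$ at $\overline s$, rather than by a global entropy of $\overline S$ (which would only produce the $n^{-1/3}$ minimax rate for monotone densities).

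The main obstacle is then the geometric estimate: showing that the local metric dimension of $\overline S$ at a piecewise constant $\overline s$ with $D$ pieces is of order $D$ up to logarithmic factors. The key observation is that any non-increasing density within Hellinger distance $r$ of $\overline s$ can be sandwiched, on each of the $D$ intervals carrying $\overline s$, by monotone step functions on a dyadic refinement of depth $O(\log_+(n/D))$; combined with the monotonicity constraint this yields a bracketing entropy of order $CD\log_+(n/D)$ on Hellinger balls. Feeding this entropy into the chaining/peeling argument of Baraud~\citeyearpar{Baraud:2014kq} should produce the exponent $3$ in the polylog factor (one logarithm from the bracket count, one from the chaining integral, one from the peeling over a geometric sequence of Hellinger radii). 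I expect this entropy step, together with a careful handling of the unbounded extremal intervals $I_{0}$ and $I_{D+1}$ on which the densities vanish, to be where all the real work of the proof is concentrated.

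As final housekeeping, I would verify the separability of $\overline S$ required by the definition of a density model (using rational endpoints and rational values on a dense countable $S\subset\overline S$), so that \eref{eq-robust3} and the $\rho$-estimator construction of Section~\ref{sect-RhoEst} apply as stated, and check that the infimum over $\overline s\in\overline V(D)\cap\overline S$ in the robustness bound is compatible with the Hellinger approximation term on the right-hand side of \eref{eq-1}.
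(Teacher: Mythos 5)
Your high-level plan is correct and aligns with the paper: reduce to a local risk bound at points of $\overline V(D)\cap\overline S$ and derive \eref{eq-1} by optimizing over $D$ and the approximating point. But the way you propose to establish the local risk bound is not the paper's approach, and as stated it has a real gap.

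The hypothesis of Theorem~\ref{thm-main} is \emph{not} a bound on a local metric dimension or a local bracketing entropy of $\overline S$; it is that the approximating points $\overline s$ be \emph{extremal} in $\overline S$ in the sense of Definition~\ref{def-extremal}, i.e.\ that the family of level sets $\{t - \lambda\overline s > 0\}$, $t\in\overline S$, $\lambda\ge 0$, forms a VC-class of bounded dimension. This is a combinatorial/structural condition, and it is exactly what the companion paper Baraud~\citeyearpar{Baraud:2014kq} is built to exploit (weak VC-major classes, not bracketing entropy). The crux of the paper's proof of Theorem~\ref{thm-mono} is therefore a short geometric observation: if $\overline s\in\overline V(D)\cap\overline S$ is based on the partition $\II\in\JJ(D+2)$, then for any non-increasing $t=f\1_{(\overline x,+\infty)}\in\overline S$ and any $\lambda\ge 0$, the set $\{t-\lambda\overline s>0\}\cap I$ is a (possibly empty) subinterval of $I$ for each $I\in\II$, because $t-\lambda\overline s$ is monotone on each piece of $\II$ beyond $\overline x$ and zero before. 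By Lemma~\ref{lem-vcm} and Lemma~\ref{L-trans} this gives a VC-dimension $\le 2(D+2)$, hence $d(\overline s)\le 4(D+2)$ by Proposition~\ref{prop-fonda}, and Theorem~\ref{thm-main} concludes immediately. Your proposal misses this step entirely.

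The bracketing-entropy sketch you substitute for it is both off-model and unconvincing on its own terms. First, it does not feed into Theorem~\ref{thm-main}: that theorem takes a degree $d(\overline s)$, not a bracketing number, and Baraud~\citeyearpar{Baraud:2014kq} does not deliver chaining bounds from bracketing entropy — it bounds the supremum of the empirical process over weak VC-major classes. Second, the heuristic ``dyadic refinement of depth $O(\log_+(n/D))$'' is unjustified: there is no reason a Hellinger ball around $\overline s$ of radius $r$ restricts the depth of a dyadic refinement by $\log_+(n/D)$, and densities in $\overline S$ have arbitrary unbounded support, so the sandwiching argument you sketch does not even set up cleanly. Third, your account of where the $\log^3$ factor comes from (bracket count / chaining / peeling) does not match the paper's mechanism, which is: $\widetilde{\Gamma}(d)\approx d\log(n/d)$ from the weak-VC bound, multiplied by $\log^2(en/(n\wedge d))$ after squaring $B$ in \eref{eq-bDS}. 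If you want to rescue your sketch you would need to re-derive the entire machinery of Theorem~\ref{thm-main} from bracketing entropy, which is a much harder task than the three-line VC argument the paper actually uses.
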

%
\begin{remark}
Since $C\le1$, the left-hand side is always bounded by one so that it is useless to consider
values of $D$ that lead to a bound which is not smaller than one, in particular $D\ge n$,
and (\ref{eq-1}) is actually equivalent to
\[
C\E\!\cro{h^{2}(s,\widehat s)}\le\inf_{1\le D<n}\cro{h^{2}\!\left(s,\overline V(D)\cap\overline S\right)
+{D\over n}\log_+^{3}\!\pa{n\over D}}.
\]
Although we shall not repeat it systematically, the same remark will hold for all our subsequent results.
\end{remark}
Bound (\ref{eq-1}) means that the risk function $s\mapsto \E\!\cro{h^{2}(s,\widehat s)}$ of 
$\widehat s$ over $\LL_{\mu}$ can  be quite small in the neighbourhood of some specific densities 
$t\in\overline S$: if $s$ belongs to $\overline V(D)\cap\overline S$ with $D<n$ or is close enough to some density $t\in\overline V(D)\cap\overline S$, the risk of 
$\widehat s$ is of order $D/n$, up to logarithmic factors. More precisely, 
\[
\sup_{s\in \overline V(D)\cap\overline S}\E\!\cro{h^{2}(s,\widehat s)}\le C'{D\over n}\log_+^{3}\!
\pa{n\over D}\quad\mbox{for}\quad1\le D<n.
\]
When $n$ becomes large and $D$ remains fixed, the rate of convergence of $\widehat s$ towards an element of $\overline V(D)\cap\overline S$ is therefore almost parametric.

Of particular interest are the densities $t$ which are bounded, supported on a compact
interval $[a,b]$ of $\R$ (for numbers $a<b$ depending on $t$) and non-increasing on
$(a,b)$. Given $\overline{M}\ge0$, we introduce the set $\overline S(\overline{M})$ of 
densities $t$ of this form and for which 
\begin{equation}
(b-a)V^2_{[a,b]}\left(\sqrt{t}\right)=M(t)\le \overline{M},
\label{Eq-Mt}
\end{equation}
where the {\em variation} $V_{[a,b]}\left(\sqrt{t}\right)$ of the non-increasing function $\sqrt{t}$ on $[a,b]$ is defined
in the following way:
\begin{defi}\label{def-var}
Let the function $f$ be defined on some interval $I$ (with positive length) of $\R$ and monotone on the interior $\mathring{I}$ of $I$. Its variation on $I$ is given by 
\begin{equation}
V_{I}(f)=\sup_{x\in\mathring{I}}f(x)-\inf_{x\in\mathring{I}}f(x)\in[0,+\infty].
\label{Eq-var}
\end{equation}

\end{defi}

Note that $\overline S(0)$ is the set of uniform densities on intervals, so that 
$\overline S(0)=\overline V(1)$, and that $\overline S(\overline{M})$ is not compact and contains densities that can be arbitrarily large in sup-norm. The functional $M$ remains invariant by translation and scaling: if $u(\cdot)=\lambda t\!\left(\lambda(\cdot-\tau)\str{3.5}\right)$ with $\lambda>0$ and $\tau\in\R$, then $M(u)=M(t)$ which implies that $\overline S(\overline{M})$ is also invariant by translation and scaling. It turns out that the densities lying in $\overline S(\overline{M})$ can be well approximated by elements of $\overline V(D)$. More precisely, the following approximation result holds.
\begin{prop}\label{prop-approx}
For all $D\in\N^*$ and $t\in\bigcup_{\overline{M}\ge0}\overline S(\overline{M})$,
\[
h^2\!\left(t,\overline V(D)\cap\overline S\right)\le\left[M(t)/(2D)^2\right]\wedge1.
\]  
\end{prop}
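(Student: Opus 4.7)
The plan is to exhibit, for each $D \ge 1$, a concrete density $u \in \overline V(D) \cap \overline S$ satisfying $h^2(t,u) \le M(t)/(2D)^2$; the $\wedge 1$ clause is automatic since $h \le 1$. Write $f := \sqrt t$ and $V := V_{[a,b]}(f)$, where $[a,b]$ is the support of $t$, so that $M(t) = (b-a) V^2$.

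First I would partition $(a,b)$ by equi-slicing the range of the monotone function $f$: setting $y_j = \sup_{(a,b)} f - jV/D$ and $x_j = \inf\{x \in (a,b) : f(x) \le y_j\}$ for $j = 0, \ldots, D$ produces a partition of $(a,b)$ (up to a null set) into intervals $I_1, \ldots, I_D$ (some possibly empty) with $V_{I_j}(f) \le V/D$ on each non-empty piece. I would then set $u_j := \ell(I_j)^{-1} \int_{I_j} t\, d\mu$ and $u := \sum_{j=1}^D u_j \1_{I_j}$, extended by zero outside $(a,b)$. Because the $(u_j)$ are averages of the non-increasing density $t$ over left-to-right ordered intervals, the sequence is itself non-increasing, and $\int u\, d\mu=1$ by construction, placing $u$ in $\overline V(D) \cap \overline S$ (the underlying $\JJ(D+2)$-partition of $\R$ has its two unbounded extremal pieces outside $[a,b]$, where $u$ vanishes).

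The heart of the argument is the identity, valid on each non-empty $I_j$ with $\overline f_j := \ell(I_j)^{-1} \int_{I_j} f\, d\mu$,
\[
\int_{I_j} \pa{\sqrt t - \sqrt{u_j}}^2 d\mu \;=\; \frac{2\sqrt{u_j}}{\sqrt{u_j} + \overline f_j}\cdot \ell(I_j)\, \Var_{I_j}(f),
\]
where $\Var_{I_j}(f)$ denotes the variance of $f$ under the uniform distribution on $I_j$. This follows by expanding squares, using $\int_{I_j} f^2\, d\mu = \ell(I_j) u_j$, and invoking $u_j - \overline f_j^{\,2} = \Var_{I_j}(f)$. Since $\overline f_j \ge 0$, the prefactor is bounded by $2$; Popoviciu's inequality then gives $\Var_{I_j}(f) \le V_{I_j}(f)^2/4 \le (V/D)^2/4$. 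Summing over $j$ and using $\sum_j \ell(I_j) = b-a$ yields
\[
\int \pa{\sqrt t - \sqrt u}^2 d\mu \;\le\; \frac{(b-a) V^2}{2D^2} \;=\; \frac{M(t)}{2D^2},
\]
so $h^2(t, u) \le M(t)/(4D^2) = M(t)/(2D)^2$, as required.

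The main subtlety is extracting the factor $4$ rather than $2$ in the denominator. A blunt $L^\infty$ estimate $|\sqrt t - \sqrt{u_j}| \le V_{I_j}(f) \le V/D$ on each $I_j$ only delivers $h^2(t,u) \le M(t)/(2D^2)$. The choice $u_j = $ average of $t$ on $I_j$ (forced upon us so that $u$ is a density) is what permits the sharper identity above; Popoviciu's quartic bound applied to the variance of $\sqrt t$ then closes the factor-of-two gap.
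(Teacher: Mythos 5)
Your proof is correct, and it takes a genuinely different route from the paper's. The paper approximates $f=\sqrt t$ by the piecewise constant function $f_{\II}$ whose value on each interval is the \emph{mean of $\sqrt t$}; this $f_{\II}^2$ is not a density, so the paper invokes the orthogonal-projection inequality of Lemma~\ref{L-Hell} (namely $h^2(t,u)=1-\cos\alpha\le\sin^2\alpha=\inf_\lambda\|\sqrt t-\lambda\sqrt u\|^2$) to normalize without paying any price, and then the $1/4$ comes from Lemma~\ref{lem-meanapprox} (the mean is the best constant $L^2$-approximation, combined with the oscillation bound). You instead average \emph{$t$ itself}, so that $u=\sum_j u_j\1_{I_j}$ is automatically a density and belongs to $\overline V(D)\cap\overline S$ with no normalization needed; the price is that you then have to control $\frac12\|\sqrt t-\sqrt u\|^2$ directly, which you do through the exact identity
$\int_{I_j}(\sqrt t-\sqrt{u_j})^2\,d\mu=\frac{2\sqrt{u_j}}{\sqrt{u_j}+\overline f_j}\,\ell(I_j)\,\Var_{I_j}(f)$
(which is correct: it reduces to $2\ell(I_j)\sqrt{u_j}(\sqrt{u_j}-\overline f_j)$ and $\Var_{I_j}(f)=(\sqrt{u_j}-\overline f_j)(\sqrt{u_j}+\overline f_j)$) together with the bound $\le 2$ on the prefactor and Popoviciu's inequality $\Var_{I_j}(f)\le V_{I_j}(f)^2/4$. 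Both approaches land on the same constant, and you correctly diagnose that the blunt $L^\infty$ estimate $|\sqrt t-\sqrt{u_j}|\le V/D$ would only yield $M(t)/(2D^2)$. One small thing worth tightening is the construction of the range partition: your definition of $x_j$ is fine when $f$ is continuous, but when $f$ has a jump several $x_j$ may coincide (yielding empty $I_j$), and it would be cleaner to say explicitly, as the paper does in Lemma~\ref{lem-debase}, that one simply discards the degenerate pieces and checks $V_{I_j}(f)\le V/D$ on each of the at most $D$ surviving intervals, which indeed follows from $y_j<f\le y_{j-1}$ on $(x_{j-1},x_j)$.
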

Using the triangle inequality, the right-hand side of~\eref{eq-1} can be bounded from above in the following way: for all $\overline{M}\ge0$ and $t\in \overline S(\overline{M})$,
\begin{eqnarray*}
\lefteqn{ \inf_{D\ge1}\cro{h^{2}\!\left(s,\overline V(D)\cap\overline S\right)+{D\over n}\log_+^{3}\!\pa{n\over D}}}\hspace{35mm}\\&\le& 2h^{2}(s,t)+\inf_{D\ge1}\cro{2h^{2}\!\left(t,\overline V(D)\cap\overline S\right)+{D\over n}\log_+^{3}\!\pa{n\over D}}\\&\le& 2h^{2}(s,t)+\inf_{D\ge1}\cro{\frac{\overline{M}}{2D^2}+{D\over n}\log_+^{3}\!\pa{n\over D}}.
\end{eqnarray*}
Finally, since $t$ is arbitrary in $\overline S(\overline{M})$,
\[
\inf_{D\ge1}\cro{h^{2}\!\left(s,\overline V(D)\cap\overline S\right)+{D\over n}\log_+^{3}\!\pa{n\over D}}\le 2h^{2}\!\left(s,\overline S(\overline{M})\right)+\inf_{D\ge1}\cro{\frac{\overline{M}}{2D^2}+{D\over n}\log_+^{3}\!\pa{n\over D}}.
\]
Optimizing the right-hand side with respect to $D$ and using the facts that $\overline{M}$ is arbitrary and $\log n>1$, we derive the following corollary of Theorem~\ref{thm-mono}.
\begin{cor}\label{cor-decrease}
For all probabilities $P_s$ in $\cP_{\!\mu}$, any $\rho$-estimator $\widehat s$ of $s$ on $\overline S$ 
satisfies, for some constant $C\in(0,1]$,
\begin{equation}\label{res-2}
C\E\!\cro{h^{2}(s,\widehat s)}\le\inf_{\overline{M}\ge0}\cro{h^{2}\!\left(s,\overline S(\overline{M})\right)+\left(\left(\overline{M}^{1/3}n^{-2/3}(\log n)^{2}\right)\bigvee \left(n^{-1}(\log n)^{3}\right)\right)}.
\end{equation}
\end{cor}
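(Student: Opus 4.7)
The plan is to pick up exactly where the paragraph preceding the corollary leaves off. That paragraph, combined with Theorem~\ref{thm-mono}, the triangle inequality and Proposition~\ref{prop-approx}, already yields, for every $\overline{M}\ge0$,
\[
C\E\!\cro{h^{2}(s,\widehat s)}\le 2h^{2}\!\left(s,\overline S(\overline{M})\right)+\inf_{D\ge1}g(D),\qquad g(D):=\frac{\overline{M}}{2D^{2}}+\frac{D}{n}\log_+^{3}\!\pa{\frac{n}{D}}.
\]
So all that is left is a deterministic optimization of $g$ in $D$, followed by taking the infimum over $\overline{M}\ge 0$ and absorbing the factor $2$ and other multiplicative constants into $C$.

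Since $\E\!\cro{h^{2}(s,\widehat s)}\le 1$ always, I may freely restrict to $1\le D<n$, for on this range $\log_+(n/D)=\log(n/D)\le\log n$, hence $g(D)\le \overline{M}/(2D^{2})+(D/n)(\log n)^{3}$. I would balance the two summands with the choice
\[
D^{\star}:=\left\lceil \overline{M}^{1/3}n^{1/3}(\log n)^{-1}\right\rceil\vee 1.
\]
In the regime $\overline{M}\ge (\log n)^{3}/n$ one has $D^{\star}\asymp\overline{M}^{1/3}n^{1/3}/\log n$, both summands are then of order $\overline{M}^{1/3}n^{-2/3}(\log n)^{2}$, and one checks that $D^{\star}<n$ outside the trivial regime where the bound already exceeds $1$. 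In the complementary regime $\overline{M}<(\log n)^{3}/n$ the choice collapses to $D^{\star}=1$ and gives $g(1)\le \overline{M}/2+n^{-1}(\log n)^{3}\le \tfrac{3}{2}n^{-1}(\log n)^{3}$. In either case $g(D^{\star})$ is bounded, up to a universal multiplicative constant, by the maximum appearing inside the brackets of~\eref{res-2}. Taking the infimum over $\overline{M}\ge 0$ yields the stated bound.

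I do not expect any genuine obstacle here: all probabilistic content is already encoded in Theorem~\ref{thm-mono}, and the remaining work is elementary calculus on $g$. The only mild care needed is the boundary case where the naive balancing choice would fall below $1$, which is handled by the two-regime split above; constants are absorbed by shrinking $C$.
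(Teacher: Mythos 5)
Your proposal is correct and follows exactly the route the paper indicates: you start from the inequality established in the paragraph preceding the corollary, then perform the deterministic optimization over $D$ (balancing $\overline{M}/(2D^2)$ against $(D/n)(\log n)^3$, with the two-regime split handling the boundary $D=1$ and the trivial case), and finally take the infimum over $\overline{M}$. The paper compresses this into the phrase ``optimizing the right-hand side with respect to $D$ and using the facts that $\overline{M}$ is arbitrary and $\log n>1$,'' so your argument is simply the fleshed-out version of the same proof.
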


In particular, if $s\in\overline S(\overline{M})$ for some $\overline{M}\ge n^{-1}(\log n)^{3}$, the risk bound of the estimator is not larger (up to a universal constant) than $\overline{M}^{1/3}n^{-2/3}(\log n)^{2}$ while for smaller values of $\overline{M}$ it is bounded by $n^{-1}(\log n)^{3}$. Up to logarithmic factors, this rate
(with respect to $n$) is optimal since it corresponds to the lower bound of order $n^{-2/3}$ for
the minimax risk on the subset of $\overline S(\overline{M})$ consisting of the non-increasing densities 
supported in $[0,1]$ and bounded by $\overline{M}$. This lower bound follows from the proof of Proposition~1 of Birg\'e~\citeyearpar{MR902241}. 
The result was actually stated in this paper for the $\IL_1$-distance 
but its proof shows that it applies to the Hellinger distance as well. This property means that, although the set $\overline S(\overline{M})$ is not compact because the support of the densities is unknown, the minimax risk on $\overline S(\overline{M})$ is finite. We do not know any other estimator with the same performance which is also robust with respect to Hellinger deviations.

Note that Corollary~\ref{cor-decrease} can also be used to determine the rate of estimation for
decreasing densities $s$ with possibly unbounded support and maximum value, provided that
we have some assumption about the behaviour of the function $\overline{M}\mapsto h\!\left(s,\overline S(\overline{M})\right)$ when $\overline{M}$ goes to infinity.

\section{The main result}\label{sect-main}
Let us start with some definitions.
\begin{defi}
A class $\CC$ of subsets of $\X$ is said to shatter a finite subset $A=\{x_{1},\ldots,x_{m}\}$
of $\X$ if the class of subsets
\begin{equation}
\CC\cap A=\{C\cap A,\ C\in\CC\}
\label{Eq-CA}
\end{equation}
is equal to the class of all subsets of $A$ or, equivalently, if $|\CC\cap A|=2^{m}$. A non-empty class 
$\CC$ of 
subsets of $\X$ is a VC-class with dimension $d\in\N$ if there exists some integer $m$
such that no finite subset $A\subset \X$ with cardinality $m$ can be shattered
by $\CC$ and $d+1$ is the smallest $m$ with this property.
\end{defi}
\begin{defi}\label{def-wVC}
Let $\FF$ be a non-empty class of functions on a set $\X$ with values in $[-\infty,+\infty]$. We shall say that $\FF$ is weak VC-major with dimension $d\in\N$ if $d$ is the smallest integer $k\in\N$ such that, for all $u\in\R$, the class
\begin{equation}
\CC_{u}(\FF)=\left\{\strut\{x\in\X,\ \ f(x)>u\},\ f\in\FF\right\}
\label{Eq-Cu}
\end{equation}
is a VC-class of subsets of $\X$ with dimension not larger than $k$.  
\end{defi}
We may now introduce the main property to be used in this paper.
\begin{defi}\label{def-extremal}
Let $\FF$ be a class of real-valued functions on $\X$. We shall say that an element $\overline f\in \FF$ is extremal in $\FF$ (or is an extremal point of $\FF$) with degree $d(\overline f)=d\vee 1\in\N^*$ if the class of functions
\[
(\FF/\overline f)=\{{f/\overline f},\ f\in\FF\},
\]
is weak VC-major with dimension $d$.
\end{defi}
\begin{prop}\label{prop-fonda}
Let $\FF$ be a class of nonnegative functions on $\X$. The element $\overline f\in \FF$ is extremal in $\FF$ with degree not larger than $2d$ if for all $\lambda\ge 0$, 
\[
\CC(\FF,\overline f,\lambda)=\ac{\X}\cup\left\{\strut\{x\in\X\,|\,f(x)-\lambda\overline f(x)>0\},
\; f\in \FF\right\}
\]
is a VC-class with dimension not larger than $d\ge 1$. 
\end{prop}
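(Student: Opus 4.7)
The plan is to verify the weak VC-major condition for $\FF/\overline f$ directly, by analyzing, for each $u\in\R$ and each $f\in\FF$, the level set $\{x\in\X\,:\, f(x)/\overline f(x) > u\}$, and showing that the associated class of subsets has VC-dimension at most $d$.

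First I would split $\X$ into $A=\{\overline f=0\}$ and $B=\{\overline f>0\}$ and, using the conventions $0/0=1$ and $a/0=+\infty$ for $a>0$, describe the level set on each piece. This gives three cases according to the value of $u$. If $u<0$, then since $f,\overline f\ge 0$ the ratio $f/\overline f$ is everywhere $\ge 0>u$ (on $A$ it equals $+\infty$ or $1$, on $B$ it is nonnegative), so $\{f/\overline f>u\}=\X$, which belongs to $\CC(\FF,\overline f,0)$. If $u\ge 1$, then on $A$ the ratio exceeds $u$ iff $f(x)>0$, which (because $\overline f=0$ there) is the same as $f(x)-u\overline f(x)>0$; on $B$ the ratio exceeds $u$ iff $f(x)-u\overline f(x)>0$. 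So $\{f/\overline f>u\}=\{f-u\overline f>0\}\in\CC(\FF,\overline f,u)$. In both of these cases $\CC_u(\FF/\overline f)\subseteq\CC(\FF,\overline f,\lambda)$ for a suitable $\lambda\ge 0$, so its VC-dimension is at most $d$ by hypothesis.

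The remaining case is $0\le u<1$. On $A$ the ratio equals $+\infty$ or $1$, both $>u$, so all of $A$ is included; on $B$ we again get $\{f-u\overline f>0\}\cap B$. Hence $\{f/\overline f>u\}=A\cup\{f-u\overline f>0\}$, which is the union of a \emph{fixed} set $A$ with a member of $\CC(\FF,\overline f,u)$. I would then invoke the elementary fact that if $\CC$ is a VC-class of dimension at most $d$ on $\X$ and $A\subseteq\X$ is fixed, then $\{A\cup C:C\in\CC\}$ has VC-dimension at most $d$: indeed, if $\{x_1,\ldots,x_m\}$ were shattered, no $x_i$ could lie in $A$ (otherwise the subset omitting $x_i$ would never be realised), and then the traces of $A\cup C$ on the $x_i$'s coincide with those of $C$, so $\CC$ itself would shatter $\{x_1,\ldots,x_m\}$, forcing $m\le d$. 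Applying this to $\CC=\CC(\FF,\overline f,u)$ yields again a VC-dimension bound of $d$.

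Combining the three cases, $\CC_u(\FF/\overline f)$ is a VC-class of dimension at most $d$ for every $u\in\R$, so $\FF/\overline f$ is weak VC-major with dimension at most $d$, and $\overline f$ is extremal in $\FF$ with degree $d\vee 1\le d\le 2d$, as claimed. The only delicate point is the bookkeeping imposed by the conventions on $\overline f=0$, which forces the case distinction at the thresholds $u=0$ and $u=1$; once this is handled cleanly the conclusion is immediate. (The factor $2$ in the stated bound is therefore rather generous; a bound of $d$ is actually available from the same argument.)
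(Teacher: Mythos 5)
Your proof is correct, and it takes a genuinely different (and sharper) route than the paper's. The paper handles the case $u\ge 0$ by applying part~3 of Lemma~\ref{L-trans}: it bounds the VC dimension of $\CC_u(\FF/\overline f)\cap A$ and of $\CC_u(\FF/\overline f)\cap A^c$ (with $A=\{\overline f>0\}$) separately by $d$ and then adds, which is where the factor $2$ comes from. You instead observe that for $0\le u<1$ the level set has the \emph{exact} form $A\cup\{f-u\overline f>0\}$ with $A=\{\overline f=0\}$ a fixed set, and you establish the elementary but pointed fact that unioning every member of a VC-class with one fixed set does not increase the VC dimension (no shattered point can lie in $A$, so the shattering reduces to the original class). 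This avoids the additive bookkeeping entirely and yields a VC dimension $\le d$ in all three cases $u<0$, $0\le u<1$, $u\ge1$, hence degree $\le d\le 2d$. So your approach is not only valid but shows that the factor $2$ in the statement is superfluous; the paper's version is coarser because it routes through the generic partition lemma rather than exploiting the special structure of the union. Your handling of the conventions $0/0=1$, $a/0=+\infty$ on $\{\overline f=0\}$ is also correct, including the observation that the portion of $\{f-u\overline f>0\}$ inside $\{\overline f=0\}$ is absorbed by $A$ in the union.
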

\begin{proof}
Let us bound the VC dimension of $\CC_{u}((\FF/\overline f))$ according to the value of $u\in\R$. If $u<0$, $\CC_{u}((\FF/\overline f))=\{\X\}$ and is therefore VC with dimension not larger than $0\le 2d$. Let us now assume that $u\ge 0$ and set $A=\{\overline f>0\}$. Using Lemma~\ref{L-trans} (in Section~\ref{prelim}), it suffices to prove that $\CC_{u}((\FF/\overline f))\cap A$ and $\CC_{u}((\FF/\overline f))\cap A^{c}$ are two VC-classes with dimensions not larger than $d$. For all $x\in A$ and $f\in\FF$
\[	
f(x)/\overline{f}(x)>u\qquad\mbox{is equivalent to}\qquad f(x)>u\overline{f}(x)
\]
showing thus that $\CC_{u}((\FF/\overline f))\cap A\subset \CC(\FF,\overline f,u)\cap A$ and is therefore VC with dimension not larger than $d$. Let us now turn to the case where $x\not\in A$, which means that $\overline f(x)=0$ so that $f(x)/\overline f(x)$ is either 1 or $+\infty$ (with our conventions). For $u\ge 1$ and all $x\in A^{c}$,
\[
f(x)/\overline{f}(x)>u\qquad\mbox{is equivalent to}\qquad f(x)>0
\]
and $\CC_{u}((\FF/\overline f))\cap A^{c}\subset \CC(\FF,\overline f,0)\cap A^{c}$ and is therefore VC with dimension not larger than $d$.
For $u\in [0,1)$, $(f/\overline{f})(x)>u$ for all $x\in A^{c}$, hence $\CC_{u}((\FF/\overline f))\cap A^{c}=\{A^{c}\}$ which is VC with dimension $0<d$ and this concludes the proof.
\end{proof}

Let us now state our main result.
\begin{thm}\label{thm-main}
Let $\overline S$ be a model with a non-void set $\overline \Lambda$ of extremal points. Any $\rho$-estimator $\widehat{s}$ on $\overline S$ satisfies, for some universal 
constant $C\in(0,1]$,
\[
\P_s\!\cro{Ch^{2}(s,\widehat s)\le\inf_{\overline s\in\overline \Lambda}
\cro{h^{2}(s,\overline s)+{d(\overline s)\over n}\log_+^{3}\!\pa{n\over d(\overline s)}}+{\xi\over n}}
\ge 1-e^{-\xi}\quad\mbox{for all }\xi>0,
\]
whatever the true distribution $P_s\in \cP_{\!\mu}$. Consequently, 
\[
C\E\!\cro{h^{2}(s,\widehat s)}\le\inf_{\overline s\in\overline \Lambda}
\cro{h^{2}(s,\overline s)+{d(\overline s)\over n}\log_+^{3}\!\pa{n\over d(\overline s)}}.
\]
\end{thm}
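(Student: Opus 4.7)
The plan is to combine three ingredients: (i) the oracle-type deviation inequality that every $\rho$-estimator satisfies, as established in Baraud et al.\ \citeyearpar{Baraud:2014qv}; (ii) the reduction of the local complexity term appearing in (i) to an empirical process indexed by the ratio class $(\overline S/\overline s)$; and (iii) the sharp supremum bound for empirical processes over weak VC-major classes proved in Baraud \citeyearpar{Baraud:2014kq}. Once we show that the local complexity at any extremal $\overline s\in\overline\Lambda$ is at most $(d(\overline s)/n)\log_+^3(n/d(\overline s))$, the conclusion is obtained by selecting $\overline s$ as an approximate minimizer on the right-hand side and integrating.

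\textbf{Step 1: deviation inequality at a fixed reference point.} From the construction of $\widehat s$ as an approximate minimizer of $\gup(S,\cdot)$ and the concentration properties of the criterion $\gT(\gX,t,t')$, one obtains the following inequality (detailed in Baraud et al.\ \citeyearpar{Baraud:2014qv}): for each fixed $\overline s$ in the countable dense subset $S$ of $\overline S$ and for every $\xi>0$, with probability at least $1-e^{-\xi}$,
\[
Ch^2(s,\widehat s)\le h^2(s,\overline s)+\Delta_n(\overline s)+\frac{\xi}{n},
\]
where $\Delta_n(\overline s)$ is controlled by the expectation of the supremum, over $t$ in a Hellinger neighbourhood of $\overline s$ in $\overline S$, of the centred empirical process of functions $\psi\bigl(\sqrt{t/\overline s}\bigr)$. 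Passing from a fixed $\overline s$ to the infimum over $\overline\Lambda$ on the right-hand side is then purely deterministic: an approximate minimizer exists for every $\eta>0$, and letting $\eta\to0$ along a countable sequence yields the infimum form.

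\textbf{Step 2: reduction to a weak VC-major class.} The function $\psi$ is bounded and strictly increasing on $[0,+\infty]$, so for every $u\in[-1,1)$ the level set $\{\psi(\sqrt{t/\overline s})>u\}$ coincides with $\{t/\overline s>(\psi^{-1}(u))^2\}$. Consequently the class $\{\psi(\sqrt{t/\overline s}):t\in\overline S\}$ is weak VC-major with dimension at most $d(\overline s)$ as soon as $(\overline S/\overline s)$ is, which is precisely Definition \ref{def-extremal}; in favourable situations this last property can moreover be checked through Proposition \ref{prop-fonda}. This transfers the extremality hypothesis into a VC-dimension bound on the level sets of the indexing class of the relevant empirical process.

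\textbf{Step 3: empirical process bound and conclusion.} Applying the main inequality of Baraud \citeyearpar{Baraud:2014kq} to this weak VC-major class of $[-1,1]$-valued functions yields an expected supremum bound of order $\sqrt{(d(\overline s)/n)\log_+^3(n/d(\overline s))}$. Feeding this into a localization/peeling argument that balances the empirical radius against the Hellinger radius around $\overline s$ produces the fixed point
\[
\Delta_n(\overline s)\le C'\frac{d(\overline s)}{n}\log_+^3\!\pa{\frac{n}{d(\overline s)}}.
\]
Combining with Step 1, taking the infimum over $\overline s\in\overline\Lambda$, and integrating the deviation bound gives both assertions of Theorem~\ref{thm-main}.

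\textbf{Main obstacle.} I expect the hardest step to be Step 3, specifically the localization argument that converts the global supremum bound from Baraud \citeyearpar{Baraud:2014kq} into a local complexity of the right order with the correct cubic logarithmic factor. The classical bracketing-entropy approach \emph{à la} van de Geer would only deliver a weaker rate and would prevent the superminimaxity phenomenon; the gain comes from the fact that weak VC-major classes admit sharper concentration than their bracketing entropy suggests. By contrast, passing the $\psi$-transformation through the level-set argument without losing VC-dimension is routine once Definition \ref{def-extremal} and Proposition \ref{prop-fonda} are in place.
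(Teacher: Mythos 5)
Your proposal is correct and follows essentially the same path as the paper: Theorem~1 of Baraud et al.\ (2014) supplies the deviation inequality with the local complexity $D^{S}(\gs,\overline\gs)$, the monotone transformation $x\mapsto\psi(\sqrt{x})$ preserves the weak VC-major dimension (Proposition~3 of Baraud, 2016), and Corollary~1 of Baraud (2016) gives the supremum bound that, combined with the variance control and the fixed-point definition of $D^{S}(\gs,\overline\gs)$, produces the $(d/n)\log_+^3(n/d)$ term. The ``localization/peeling'' step you flag as the main obstacle is exactly the self-bounding fixed point $D^{S}(\gs,\overline\gs)=y_0^2\vee 1$ with $y_0=\sup\{y:\w^{S}(\gs,\overline\gs,y)>c_0y^2\}$, and the passage to the infimum over $\overline\Lambda$ is handled, as you anticipate, by enlarging $S$ to contain a countable dense subset of each $\overline\Lambda(d)$.
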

Note that the boundedness of $h$ implies that values of $d(\overline s)\ge n$ lead to a trivial bound so that the infimum could be reduced to those $\overline{s}$ such that $d(\overline s)<n$. We do not know to what extend the $\log^{3}$ factor is necessary. We believe that it is not optimal although a log-factor appears to be necessary in some situations as shown by the example of Section~\ref{histo} below.

\section{Applications}\label{sect-appli}
Throughout this section, $(\X,\A)=(\R,\B(\R))$ and $\mu$ is the Lebesgue measure on $\R$. In particular, we shall only consider densities with respect to the Lebesgue measure. We start with the following useful lemma:
\begin{lem}\label{lem-vcm}
If $\D$ is a class of subsets of $\R$ such that each element of $\D$ is the union of at most $k$ intervals, $\D$ is VC with dimension at most $2k$.
\end{lem}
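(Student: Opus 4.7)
The plan is a direct shattering argument. To show that the VC dimension of $\D$ is at most $2k$, it suffices to exhibit, for every finite set $A\subset\R$ of cardinality $2k+1$, a subset $B\subset A$ which cannot be written as $D\cap A$ for any $D\in\D$. Fix $A=\{x_1<x_2<\cdots<x_{2k+1}\}$ and take $B$ to be the set of points with odd index, $B=\{x_1,x_3,\ldots,x_{2k+1}\}$, so $|B|=k+1$.

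Suppose for contradiction that $B=D\cap A$ for some $D\in\D$, where $D=I_1\cup\cdots\cup I_r$ is a union of $r\le k$ intervals. Each of the $k+1$ odd-indexed points $x_{2j-1}\in B$ must lie in some interval $I_{\sigma(j)}$. The key observation is that two distinct odd-indexed points $x_{2j-1}<x_{2j'-1}$ (with $j<j'$) cannot lie in the same interval $I_\ell$: such an $I_\ell$ would necessarily contain the intermediate even-indexed point $x_{2j}\in A\setminus B$, contradicting $D\cap A=B$. Hence $j\mapsto \sigma(j)$ is injective, forcing $r\ge k+1$, a contradiction.

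This shows that no subset of $\R$ of size $2k+1$ is shattered by $\D$, so the VC dimension is at most $2k$, proving the lemma. The argument has essentially no obstacle; it is the standard pigeonhole/alternation trick, the only thing to be slightly careful about is the convention on whether an ``interval'' is open, closed, or half-open, but the argument goes through in any case since the containment of the intermediate point $x_{2j}$ by any connected subset of $\R$ containing $x_{2j-1}$ and $x_{2j+1}$ is automatic.
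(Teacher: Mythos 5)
Your argument is correct and is essentially the paper's own proof: the paper takes the same $2k+1$ points, proposes picking out the odd-indexed ones, and states that a union of at most $k$ intervals cannot do so, leaving the verification as ``easy to check.'' You have simply filled in the pigeonhole/alternation details that the paper omits.
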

\begin{proof}
Let $x_{1}<x_2<\ldots<x_{2k+1}$ be $2k+1$ points of $\R$. It is easy to check that elements of the form $J_{1}\cup\dots\cup J_{l}\in\D$, where the $J_{j}$ are disjointed intervals and $l\le k$, cannot pick up the subset of points $\bigcup_{i=0}^{k}\{x_{2i+1}\}$.
\end{proof}
%

\subsection{Piecewise constant densities\label{histo}}
Let us now consider the model $\overline{V}(D)$ of Section~\ref{Mon} to build a $\rho$-estimator. If 
$f$ and $\overline{f}$ belong to $\overline{V}(D)$, for all $\lambda\ge0$, $f-\lambda\overline{f}$ is of the form $\sum_{j=1}^{k}a_j\1_{(x_j,x_{j+1}]}$ with $k<2(D+1)$ so that $\{x\in\X\,|\,f(x)-\lambda\overline f(x)>0\}$ is the union of at most $D+1$ disjointed intervals. Applying Lemma~\ref{lem-vcm} and Proposition~\ref{prop-fonda} to the sets $\D=\CC(\overline V(D),\overline f,\lambda)$ with $\lambda\ge 0$, we obtain that all the elements of $\overline{V}(D)$ are extremal in $\overline{V}(D)$ and their degrees are not larger than $4(D+1)$. We therefore deduce from Theorem~\ref{thm-main} that
\begin{equation}
\sup_{s\in\overline{V}(D)}\E\!\cro{h^{2}(s,\widehat s)}\le C{D\over n}\log_+^{3}\!\pa{n\over D},
\label{Eq-histo}
\end{equation}
which, up to the logarithmic factor, corresponds to a parametric rate (with respect to $n$) although the partition that defines $s$ can be arbitrary in $\JJ(D+2)$ and the support of
$s$ is unknown. It follows from Birg\'e and Massart~\citeyearpar{MR1653272}, Proposition~2,
that a lower bound for the minimax risk on $\overline{V}(D)$ is of the form $C'(D/n)\log_+(n/D)$, which shows that some power of $\log_+(n/D)$ is necessary in (\ref{Eq-histo}). We suspect that the power three for the logarithm is not optimal.

\subsection{Piecewise monotone densities\label{Sect6}}
Let us now see how Theorem~\ref{thm-main} can be applied in the simple situation of piecewise monotone
densities.
\begin{defi}
Given $k\in\N^*$ and a partition $\II=\{I_0,\ldots, I_{k-1}\}\in\JJ(k)$, a real-valued function $f$
on $\R$ will be called piecewise monotone (with $k$ pieces) based on $\II$ if $f$ is monotone on each open interval $I_j$, $0\le j\le k-1$. The set of all such functions will be denoted by $\GG_k$. For $k\ge2$ (since no density is monotone on $\R$), $\FF_k$ is the set of densities (with respect to the Lebesgue measure) that belong to $\GG_k$.  
\end{defi}
Clearly, $\GG_k\subset\GG_l$ and $\FF_k\subset\FF_l$ for all $l>k$. 
\begin{prop}\label{prop-extremal1}
For $D\ge1$ and $k\ge2$, any element $\overline{f}$ of $\FF_k\cap\overline{V}(D)$ is extremal in $\FF_k$ with degree not larger than $3(k+D+1)$. 
\end{prop}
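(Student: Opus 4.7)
My plan is to apply Proposition~\ref{prop-fonda}: proving that $\overline f$ has extremal degree at most $3(k+D+1)$ in $\FF_k$ reduces to showing that, for every $\lambda\ge 0$, the class
\[
\CC(\FF_k,\overline f,\lambda)=\{\R\}\cup\left\{\{x\in\R\,|\,f(x)-\lambda\overline f(x)>0\},\ f\in \FF_k\right\}
\]
is a VC class of dimension $d$ with $2d\le 3(k+D+1)$.

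The combinatorial heart of the argument is a partition-refinement step. Given $f\in \FF_k$ based on some partition $\II_f\in \JJ(k)$ (with $k-1$ interior endpoints) and the fixed $\overline f\in \overline V(D)$ based on $\overline\II\in \JJ(D+2)$ (with $D+1$ interior endpoints), the common refinement $\II_f\vee\overline\II$ has at most $k+D$ interior endpoints, hence at most $k+D+1$ open intervals. On each such open interval, $f$ is monotone and $\overline f$ is constant, so $f-\lambda\overline f$ is monotone and its positivity set reduces to a single sub-interval. Consequently, every element of $\CC(\FF_k,\overline f,\lambda)$ is, outside a finite null set of refinement endpoints, a union of at most $k+D+1$ intervals, and Lemma~\ref{lem-vcm} bounds its VC dimension by $2(k+D+1)$.

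To tighten the final degree to $3(k+D+1)$ rather than the loose $4(k+D+1)$ that this naive chain produces, I would open up the proof of Proposition~\ref{prop-fonda} and split the analysis along $A=\{\overline f>0\}$ and its complement. On $A$, which is a union of at most $D$ bounded intervals coming from $\overline\II$, the restriction $\{f>\lambda\overline f\}\cap A$ is a union of at most $k_A+D$ intervals, where $k_A\le k$ counts the pieces of $\II_f$ meeting $A$; on $A^c$, which consists of the two unbounded intervals on which $\overline f$ vanishes, the set reduces (according to the value of the threshold $u$) either to $A^c$ itself or to $\{f>0\}\cap A^c$, which is a union of at most $k-k_A+1$ intervals because $f$ is monotone on each piece of $\II_f$ meeting $A^c$. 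Summing the two VC dimensions additively via Lemma~\ref{L-trans}, exactly as inside the proof of Proposition~\ref{prop-fonda}, should produce the announced constant.

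I expect the main technical obstacle to be precisely this last sharpening: the straightforward chain refinement~$\to$~Lemma~\ref{lem-vcm}~$\to$~Proposition~\ref{prop-fonda} overshoots by a factor $4/3$, so one must track carefully how the $k$ monotone pieces of $f$ split between $A$ and $A^c$, and exploit the simplified structure on $A^c$ where the threshold collapses to $0$, to convert the refinement count into the tighter joint VC bound.
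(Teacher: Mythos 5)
Your overall approach --- refine the partitions $\II_f$ and $\overline\II$, observe that $f-\lambda\overline f$ is piecewise monotone on the common refinement with at most $k+D+1$ pieces, count the intervals of its positivity set, and conclude via Lemma~\ref{lem-vcm} and Proposition~\ref{prop-fonda} --- is the same as the paper's. However, there is a concrete gap in your interval count. You assert that each element of $\CC(\FF_k,\overline f,\lambda)$ is, ``outside a finite null set of refinement endpoints,'' a union of at most $k+D+1$ intervals, and then feed this count into Lemma~\ref{lem-vcm}. This step is not valid: Lemma~\ref{lem-vcm} applies to the actual sets, not to their equivalence classes modulo null sets, and the endpoints are far from negligible for VC-dimension purposes. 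If $x_j$ is a refinement endpoint with $f(x_j)-\lambda\overline f(x_j)>0$ while the two adjacent open pieces miss the threshold, then $\{x_j\}$ is an \emph{additional} singleton interval of the positivity set. Controlling the number of such extra intervals is precisely the content of Lemma~\ref{lem-km}: a function $g\in\GG_m$ has superlevel sets $\{g>a\}$ that are unions of at most $m+\lceil(m-1)/2\rceil\le(3/2)m$ intervals. The paper applies Lemma~\ref{lem-km} with $m=k+D+1$ to obtain the count $(3/2)(k+D+1)$, and this lemma is the missing ingredient in your argument; without it your count of $k+D+1$ is unjustified.

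Your proposed sharpening via the decomposition $A=\{\overline f>0\}$, $A^c$ is only sketched. The final sentence, ``should produce the announced constant,'' is a hope rather than a verified step, and as described --- summing VC dimensions additively via Lemma~\ref{L-trans} --- it reproduces rather than avoids the factor of two already built into Proposition~\ref{prop-fonda}, so it does not obviously improve the naive chain. The paper itself performs no such sharpening; its proof is exactly: refinement, then Lemma~\ref{lem-km} (to get $(3/2)(k+D+1)$ intervals), then Lemma~\ref{lem-vcm}, then Proposition~\ref{prop-fonda}. You might also note that this chain, read literally, gives a degree bound of $2\cdot 2\cdot(3/2)(k+D+1)=6(k+D+1)$, which is twice the stated $3(k+D+1)$. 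Reaching $3(k+D+1)$ requires bounding the VC dimension of the classes $\CC_u(\FF_k/\overline f)$ directly --- observing that for each $u$ the set $\{(f/\overline f)>u\}$ is itself the positivity set of a function in $\GG_{k+D+1}$, so Lemma~\ref{lem-km} and Lemma~\ref{lem-vcm} apply to it without the extra doubling from Proposition~\ref{prop-fonda}. In that sense your instinct to open up Proposition~\ref{prop-fonda} is the right one, but it must be executed, and it must be combined with Lemma~\ref{lem-km}, which you currently omit.
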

\begin{proof}
Let $f$ be a piecewise monotone density on $\R$ based on a partition $\II_0\in\JJ(k)$, therefore with $k-1$ endpoints. Let $\overline{f}\in\overline{V}(D)\cap\FF_{k}$ be a piecewise constant density based on a partition $\II_1\in\JJ(D+2)$ (with $D+1$ endpoints) and let $\II_2=\II_1\vee\II_0$. It is a partition of $\R$ with at most $k+D$ endpoints, therefore at most $k+D+1$ intervals and on each such interval $f$ is monotone and $\overline{f}$ is constant which implies that $f-\lambda\overline{f}$ belongs to $\GG_{k+D+1}$ for all $\lambda\ge0$. It then follows from Lemma~\ref{lem-km} below that the sets $\{x\in\X\,|\,f(x)-\lambda\overline f(x)>0\}$ are unions of at most $(3/2)(k+D+1)$ intervals. The conclusion follows from Lemma~\ref{lem-vcm} and Proposition~\ref{prop-fonda} applied to $\D=\CC(\FF_{k},\overline f,\lambda)$ with $\lambda\ge 0$. 
\end{proof}
\begin{lem}\label{lem-km}
If $f\in\GG_k$, whatever $a\in\R$ the set $\{x\in\X\,|\,f(x)>a\}$ can be written as a
union of at most $k+\lceil(k-1)/2\rceil\le3k/2$ intervals and the set $\{x\in\X\,|\,f(x)\le a\}$ as well.
\end{lem}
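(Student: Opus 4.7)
The plan is to decompose $\{f > a\}$ according to the partition underlying $f$ and count intervals carefully. Write $I_j = (x_j, x_{j+1})$ for the $k$ open pieces (with $x_0 = -\infty$, $x_k = +\infty$) and $U_j = \{x \in I_j : f(x) > a\}$. By monotonicity of $f$ on each $I_j$, every $U_j$ is an interval (possibly empty or unbounded), so $\{f > a\}$ is the union of the (at most $k$) non-empty $U_j$'s together with those singletons $\{x_j\}$, $1 \le j \le k-1$, for which $f(x_j) > a$. The task is to bound how many extra intervals these singletons contribute.

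A direct case check (going through both monotonicity directions on $I_{j-1}$) shows that $U_{j-1}$ has $x_j$ as a limit point iff $f(x_j-) > a$; analogously, $U_j$ touches $x_j$ from the right iff $f(x_j+) > a$. I call the singleton $\{x_j\}$ (when $f(x_j) > a$) \emph{mergeable} when at least one of these holds, in which case it fuses with an adjacent $U$ into a single interval, and \emph{unmergeable} otherwise; let $B$ denote the number of unmergeable singletons. Viewing the non-empty $U_j$'s and the in-set singletons as the nodes of a linear graph, with edges placed exactly at the mergings, the elementary identity \emph{components $=$ nodes $-$ edges} yields
\[
|\{\text{maximal intervals of }\{f > a\}\}| \le |\{j : U_j \neq \varnothing\}| + B.
\]

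The key combinatorial input is the observation that if $x_j$ and $x_{j+1}$ are both unmergeable then $f(x_j+) \le a$ and $f(x_{j+1}-) \le a$, which forces $\sup_{I_j} f \le a$ by monotonicity, hence $U_j = \varnothing$. Consequently, if the $B$ unmergeable endpoints form $r$ maximal runs of consecutive indices, at least $B - r$ of the pieces $U_j$ are emptied, so $|\{j : U_j \neq \varnothing\}| \le k - (B - r)$. Finally, $r$ such runs separated by at least one non-unmergeable slot fit into the $k - 1$ endpoint positions only when $2r - 1 \le k - 1$, giving $r \le \lfloor k/2 \rfloor = \lceil (k-1)/2 \rceil$. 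Putting everything together, the total bound becomes $(k - B + r) + B = k + r \le k + \lceil (k-1)/2 \rceil \le 3k/2$.

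For the set $\{f \le a\}$ the same scheme applies symmetrically: replace $U_j$ by $V_j := \{x \in I_j : f(x) \le a\}$ (still an interval by monotonicity) and declare $x_j$ unmergeable when $f(x_j) \le a$, $f(x_j-) > a$, $f(x_j+) > a$; the piece-emptying step then uses $\inf_{I_j} f > a$ in place of $\sup_{I_j} f \le a$. I expect the main technical point to be the careful verification that ``$U_{j-1}$ extends to $x_j$ iff $f(x_j-) > a$'' in each monotonicity case, and aligning this with the node/edge bookkeeping on the line; once that is settled the combinatorial inequality $r \le \lfloor k/2 \rfloor$ is immediate.
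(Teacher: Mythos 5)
Your proof is correct and starts from the same decomposition as the paper's (write $\{f>a\}$ as the union of the traces $U_j=\{f>a\}\cap I_j$ together with the in-set singletons $\{x_j\}$), but the subsequent bookkeeping is genuinely different, and in fact more careful. The paper singles out the singletons satisfying $f(x_{j-1}+)>a$, $f(x_j-)\le a$, $f(x_j)>a$, $f(x_j+)\le a$ and $f(x_{j+1}-)>a$, notes that two consecutive indices cannot both satisfy all five (the condition $f(x_j+)\le a$ at $j$ clashes with $f(x_j+)>a$ at $j+1$), and so bounds their number by $\lceil(k-1)/2\rceil$; the reason these are the relevant ones, and why isolated singletons flanked by empty pieces do not spoil the count, is left implicit. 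Your argument does not insist that the two flanking $U$'s be non-empty: you count all ``unmergeable'' singletons (those with $f(x_j\pm)\le a$), allow them to form consecutive runs, and compensate by the observation that a run of length $m$ forces $U_j=\varnothing$ on the $m-1$ intermediate pieces, which is exactly what turns $k+r$ (with $r$ the number of maximal runs) into an honest upper bound on the component count. This cleanly covers configurations such as $f>a$ precisely at two adjacent endpoints $x_j,x_{j+1}$ with $U_j=\varnothing$, which fall outside the paper's five-condition description yet still contribute separate components; your run/compensation step is precisely the bookkeeping the paper's version is silently relying on. Both routes land on $k+\lfloor k/2\rfloor=k+\lceil(k-1)/2\rceil\le 3k/2$. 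One minor inaccuracy in your write-up: the assertion that $U_{j-1}$ has $x_j$ as a limit point \emph{if and only if} $f(x_j-)>a$ is not an equivalence (take $f$ strictly decreasing to the value $a$ on $I_{j-1}$, so that $f(x_j-)=a$ yet $U_{j-1}=I_{j-1}$ touches $x_j$); only the ``if'' direction is true, but since that is the only direction you invoke when placing edges, the resulting bound is unaffected.
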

\begin{proof}
Let $f\in\GG_{k}$, $\II$ be the partition of $\R$ with $k$ open intervals associated to $f$ and $x_{1},\ldots,x_{k-1}$ the $k-1$ endpoints of this partition. For $I_j\in\II$, $\{x\in\X\,|\,f(x)>a\}\cap I_j$ is either 
$\varnothing$ or a non-void interval and
\begin{equation}\label{eq-decompI}
\{x\in\X\,|\,f(x)>a\}=\left(\bigcup_{j=1}^{k}\cro{\{f>a\}\cap I_{j}}\right)\bigcup\left(\bigcup_{j=1}^{k-1}\cro{\{x_{j}\}\cap\{f>a\}}\right).
\end{equation}
This decomposition shows that $\{x\in\X\,|\,f(x)>a\}$ is the union of at most $2k-1$ disjointed intervals. Nevertheless, this bound can be refined as follows. If $f(x_{j})\le a$, $\{x_{j}\}\cap\{f>a\}=\varnothing$ and the only situation we need to consider is when $f(x_{j})>a$ in which case $\{x_{j}\}\cap\{f>a\}=\{x_{j}\}$. If $x_{j}$ belongs to the closure of one of the intervals of the form $\{f>a\}\cap I_{j'}$,  the set $\cro{\{f>a\}\cap I_{j'}}\cup\{x_{j}\}$ only counts for one interval in~\eref{eq-decompI}. The only situation for which $\{x_{j}\}$ adds an extra interval occurs when $f(x_{j-1}+)>a$, $f(x_{j}-)\le a$, $f(x_{j})>a$, $f(x_{j}+)\le a$ and $f(x_{j+1}-)>a$. The number of such points $x_{j}$ is not larger than $\lceil(k-1)/2\rceil$ and $\{x\in\X\,|\,f(x)>a\}$  is therefore the union of at most $k+\lceil(k-1)/2\rceil$ intervals. The proof for $\{x\,|\,f(x)\le a\}$ is the same.
\end{proof}
An application of Theorem~\ref{thm-main} with $\overline \Lambda=\FF_{k}\bigcap\cro{\bigcup_{D\ge 1}\overline{V}(D)}$, the elements of which are extremal in $\FF_{k}$ by Proposition~\ref{prop-extremal1}, leads to the following result.
\begin{cor}\label{cor-Fk}
For all $k\ge2$, any $\rho$-estimator on $\FF_k$ satisfies, for all distributions $P_s\in\cP_{\!\mu}$,
\[
C\E\!\cro{h^{2}(s,\widehat s)}\le\inf_{D\ge1}\cro{\inf_{\overline s\in\FF_k\bigcap\overline{V}(D)}
h^{2}(s,\overline s)+{k+D\over n}\log_+^{3}\!\pa{n\over k+D}}
\]
where $C\in(0,1]$ is a universal constant.
\end{cor}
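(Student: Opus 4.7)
The result should follow by combining Theorem~\ref{thm-main} with Proposition~\ref{prop-extremal1}, plus a short bookkeeping step converting the degree-dependent term into the stated form. First I would apply Theorem~\ref{thm-main} to the model $\overline S=\FF_k$ with the chosen set of extremal points
\[
\overline\Lambda=\FF_k\cap\bigcup_{D\ge 1}\overline V(D).
\]
This set is non-void since, for instance, any uniform density on a bounded interval lies in $\overline V(1)\cap\FF_k$ for every $k\ge 2$. By Proposition~\ref{prop-extremal1}, each $\overline s\in\FF_k\cap\overline V(D)$ is indeed extremal in $\FF_k$ with degree $d(\overline s)\le 3(k+D+1)$, and splitting the resulting infimum over $\overline\Lambda$ into a double infimum over $D\ge 1$ and $\overline s\in\FF_k\cap\overline V(D)$ gives
\[
C\,\E\!\cro{h^{2}(s,\widehat s)}\le\inf_{D\ge 1}\inf_{\overline s\in\FF_k\cap\overline V(D)}\!\cro{h^{2}(s,\overline s)+\frac{d(\overline s)}{n}\log_+^{3}\!\pa{\frac{n}{d(\overline s)}}}.
\]

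Next I would replace $d(\overline s)$ by $k+D$ at the cost of a universal multiplicative constant. Since $k\ge 2$ and $D\ge 1$ we have $k+D\ge 3$, hence $d(\overline s)\le 3(k+D+1)\le 4(k+D)$. Setting $f(x)=(x/n)\log_+^{3}(n/x)$, the claim reduces to $f(d(\overline s))\le C'f(k+D)$ for a universal $C'$. I would use the following easily checked properties of $f$ on $[1,+\infty)$: $f$ is increasing on $[1,n/e^{3}]$, decreasing on $[n/e^{3},n]$, coincides with $x/n$ on $[n,+\infty)$, and is uniformly bounded by $27/e^{3}$. A three-case analysis then does the job. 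If $d(\overline s)\ge k+D$, the monotonicity of $x\mapsto\log_+(n/x)$ gives $f(d(\overline s))\le (d(\overline s)/(k+D))\,f(k+D)\le 4\,f(k+D)$. If $d(\overline s)\le k+D\le n/e^{3}$, $f$ is increasing on $[d(\overline s),k+D]$, so $f(d(\overline s))\le f(k+D)$. If $d(\overline s)\le k+D$ with $k+D\ge n/e^{3}$, the lower bound $f(k+D)\ge e^{-3}$ combined with the uniform upper bound $f(d(\overline s))\le 27/e^{3}$ gives $f(d(\overline s))\le 27\,f(k+D)$. Absorbing $C'$ into $C$ concludes the argument.

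\textbf{Main obstacle.} The argument is essentially an assembly of earlier ingredients, and the only mildly delicate point is the last step, since $f$ is not monotone on $[1,+\infty)$. The case analysis above handles this non-monotonicity cleanly. A superficially more elegant alternative would consist in picking, from the start, an $\overline s\in\FF_k\cap\overline V(D)$ realizing simultaneously a near-optimal Hellinger approximation of $s$ and a degree comparable to $k+D$, but this seems more awkward than comparing $f(d(\overline s))$ and $f(k+D)$ directly.
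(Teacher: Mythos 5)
Your approach is exactly the paper's: apply Theorem~\ref{thm-main} to the model $\FF_k$ with $\overline\Lambda=\FF_k\cap\bigcup_{D\ge1}\overline V(D)$, use Proposition~\ref{prop-extremal1} to bound $d(\overline s)\le3(k+D+1)$, and absorb the discrepancy between $d(\overline s)$ and $k+D$ into the universal constant. The paper in fact stops at that sentence and states the corollary without spelling out the bookkeeping, so your attempt to make that last step explicit is a reasonable addition.

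However, the properties of $f(x)=(x/n)\log_+^3(n/x)$ that you list are not quite right, and one of them is genuinely false. Since $\log_+(n/x)=1$ as soon as $x\ge n/e$ (not $x\ge n$), $f$ coincides with $x/n$ on $[n/e,+\infty)$ and is therefore \emph{increasing} there; in particular $f$ is decreasing only on $[n/e^3,n/e]$, not on $[n/e^3,n]$, and the claim that $f$ is ``uniformly bounded by $27/e^3$'' on $[1,+\infty)$ is wrong, since $f(x)=x/n\to\infty$. As written, your third case uses this false uniform bound. The repair is easy: in the case $d(\overline s)\le k+D$ with $k+D\ge n/e^3$, split further according to whether $d(\overline s)<n/e$ or $d(\overline s)\ge n/e$. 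In the first sub-case $f(d(\overline s))\le\sup_{[1,n/e]}f=27/e^3$ while $f(k+D)\ge\inf_{[n/e^3,\infty)}f=1/e$, giving $f(d(\overline s))\le(27/e^2)f(k+D)$. In the second sub-case both $d(\overline s)$ and $k+D$ lie in $[n/e,\infty)$ where $f(x)=x/n$ is increasing, so $f(d(\overline s))\le f(k+D)$ directly. With that correction the three-case argument is sound and yields the stated bound.
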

Note that the bound is trivial for $k\ge n-1$ and that using $D$ with $k+D\ge n$ also leads to a trivial bound so that we should restrict ourselves to $D<n-k$ when $k<n-1$.

Since, for $t\in\FF_k$,
\[
\inf_{\overline s\in\FF_k\bigcap\overline{V}(D)}h(s,\overline s)\le\inf_{t\in\FF_k} \left[h(s,t)+\inf_{\overline s\in\FF_k\bigcap\overline{V}(D)}h(t,\overline s)\right],
\]
to go further with our analysis it will be necessary to evaluate $\inf_{\overline s\in\FF_k\bigcap\overline{V}(D)}h(t,\overline s)$ for $t\in\FF_k$.
In order to do this we shall use an approximation result based on the following functional $M_k$.
\begin{defi}
Let $t\in\FF_k$ for $k\ge2$ and $\II=\{I_0,\ldots, I_{k-1}\}\in\JJ(k)$ a partition on which $t$ is based. Using the convention $(+\infty)\times0=0$, we define 
\begin{equation}
M_k(t,\II)=\left[\sum_{j=0}^{k-1}\left[\ell(I_j)V_{I_j}^2\left(\sqrt{t}\right)\right]^{1/3}\right]^3\le+\infty,
\label{Eq-Mk}
\end{equation}
where $V_{I_j}\left(\sqrt{t}\right)$ is the variation of $\sqrt{t}$ on $I_j$ given by
(\ref{Eq-var}). The functional $M_{k}$ is defined on $\FF_{k}$ as 
\[
M_k(t)=\inf_{\II}M_k(t,\II)\quad\mbox{for all }t\in\FF_{k},
\]
where the infimum runs among all partitions $\II\in\JJ(k)$ on which $t$ is based. For $0\le \overline{M}<+\infty$ and $k\in\N^*$, we denote by $\FF_{k+2}(\overline{M})$ the subset of $\FF_{k+2}$ of those densities $t$ such that $M_{k+2}(t)\le\overline{M}$.
\end{defi}
Note that with our convention, if $M_k(t,\II)<+\infty$, $t$ is equal to zero on $I_0\cup I_{k-1}$, in which case the summation in (\ref{Eq-Mk}) can be restricted to $1\le j\le k-2$, which requires that $k>2$, and that $\FF_{k+2}(0)$ is equal to $\overline{V}(k)$ (in the $\Bbb{L}_1$ sense). The functional $M_k$ is translation and scale invariant which means that it takes the same value at $t$ and $\lambda^{-1}t((\cdot-\tau)/\lambda)$ whatever $\lambda>0$ and $\tau\in\R$. Besides, it possesses the following property.
\begin{lem}\label{lem-comp}
For all $l>k$ and $t\in\FF_{k}\subset \FF_{l}$, $M_l(t)\le M_k(t)$.
\end{lem}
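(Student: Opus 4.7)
My plan is to show that for any partition $\II\in\JJ(k)$ on which $t$ is based and any refinement $\II'\in\JJ(l)$ of $\II$ with $l$ intervals, one has $M_l(t,\II')\le M_k(t,\II)$. Since monotonicity of $t$ on each $I_j\in\II$ automatically extends to every sub-interval of $I_j$ appearing in $\II'$, every such $\II'$ qualifies as a partition on which $t$ is based, and in particular is admissible in the infimum defining $M_l(t)$. Taking the infimum over refinements will then give $M_l(t)\le M_k(t,\II)$, and taking the infimum over $\II$ will yield the claim. One may clearly reduce to the case $M_k(t)<+\infty$.

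The heart of the argument is an interval-wise comparison. Fix $I_j\in\II$ and let $I_j^{(1)},\ldots,I_j^{(n_j)}$ denote its sub-intervals in $\II'$. Lengths add exactly, $\sum_m\ell(I_j^{(m)})=\ell(I_j)$, while variations are subadditive, $\sum_m V_{I_j^{(m)}}(\sqrt{t})\le V_{I_j}(\sqrt{t})$. The latter follows from monotonicity of $\sqrt{t}$ on $I_j$: the variations on adjacent pieces telescope and lose exactly the non-negative jumps of $\sqrt{t}$ at the added endpoints. I then invoke H\"older's inequality with conjugate exponents $3$ and $3/2$ applied to the sequences $\ell(I_j^{(m)})^{1/3}$ and $V_{I_j^{(m)}}(\sqrt{t})^{2/3}$ to get
\[
\sum_{m=1}^{n_j}\Bigl[\ell(I_j^{(m)})\,V_{I_j^{(m)}}^2(\sqrt{t})\Bigr]^{1/3}\le\Bigl(\sum_m\ell(I_j^{(m)})\Bigr)^{1/3}\Bigl(\sum_m V_{I_j^{(m)}}(\sqrt{t})\Bigr)^{2/3}\le\Bigl[\ell(I_j)\,V_{I_j}^2(\sqrt{t})\Bigr]^{1/3}.
\]
Summing over $j$ and cubing yields $M_l(t,\II')\le M_k(t,\II)$, as required.

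The only bookkeeping concerns the two unbounded extremal intervals $I_0$ and $I_{k-1}$ of $\II$. With the convention $(+\infty)\times 0=0$ used in the definition of $M_k$, finiteness of $M_k(t,\II)$ forces $V_{I_j}(\sqrt{t})=0$ on any unbounded $I_j$ contributing a finite term; any sub-interval of such an $I_j$ then also has variation $0$, and both sides of the H\"older bound contribute $0$, so the argument goes through unchanged. I do not anticipate any real obstacle: once one notices the exact matching between the H\"older exponents $(3,3/2)$ and the cube-root of the product $\ell\cdot V^2$ appearing in the definition of $M_k$, the lemma drops out immediately.
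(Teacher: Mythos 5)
Your proof is correct and follows essentially the same route as the paper: reduce to the comparison of a single interval of $\II$ with its sub-intervals in a refinement $\II'\succeq\II$, then apply H\"older's inequality with conjugate exponents $3$ and $3/2$ to the additivity of lengths and the subadditivity of variations. The paper normalizes ($L_j=\alpha_j L$, $V_{J_j}=\beta_j V_J$) before invoking H\"older while you apply it directly, but this is a cosmetic difference; your remark about the unbounded extremal intervals and the $(+\infty)\times 0=0$ convention is a harmless extra sentence of bookkeeping.
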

\begin{proof}
Let $\II\in\JJ(k)$ on which $t$ is based. For all partitions $\II'\in\JJ(l)$ satisfying $\II' \succeq \II$, $t$ can be viewed as an element of $\FF_{l}$ based on $\II'$ and consequently it suffices to show that $M_l(t,\II')\le M_k(t,\II)$. In fact,  it suffices to show that, when we simply divide an interval $J$ of length $L$ of $\II$ into $m$ intervals $J_1,\ldots,J_m$ of respective lengths $L_1,\ldots, L_m$,
\[
\sum_{j=1}^{m}\left[L_{j}V_{J_j}^2\left(\sqrt{t}\right)\right]^{1/3}\le L V_{J}^2\left(\sqrt{t}\right)\ \ \mbox{when}\ \ \sum_{j=1}^{m}L_j=L\quad\mbox{and}\quad\sum_{j=1}^{m}V_{J_j}\left(\sqrt{t}\right)\le
V_J\left(\sqrt{t}\right).
\]
Setting $L_j=\alpha_jL$ and $V_{J_j}\left(\sqrt{t}\right)=\beta_jV_J\left(\sqrt{t}\right)$, this amounts to show that $\sum_{j=1}^{m}\alpha_j^{1/3}\beta_j^{2/3}\le1$ which follows from H\"older's Inequality.
\end{proof}
The approximation of elements of $\FF_{k+2}(\overline{M})$ by elements of $\overline{V}(D)$
is controlled in the following way.
%
\begin{prop}\label{P-approkM}
Let $k\ge1$ and $t\in\FF_{k+2}$ with $M_{k+2}(t)<+\infty$. Then, for all $D\ge1$,
\[
h^2\!\left(t,\overline{V}(D+k)\cap\FF_{k+2}\right)\le(2D)^{-2}M_{k+2}(t).
\]
\end{prop}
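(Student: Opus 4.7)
\emph{Plan.} For any partition $\II=\{I_0,\ldots,I_{k+1}\}$ on which $t$ is based with $M_{k+2}(t,\II)<+\infty$, I would explicitly construct a piecewise-constant density $u\in\overline V(D+k)\cap\FF_{k+2}$ satisfying $h^2(t,u)\le M_{k+2}(t,\II)/(2D)^2$, then take the infimum over such $\II$ to obtain the stated bound.

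\emph{Setup and construction of $u$.} The finiteness convention in the definition of $M_{k+2}$ forces $t\equiv 0$ on the two unbounded intervals $I_0,I_{k+1}$, so $t$ is supported on the bounded intervals $I_1,\ldots,I_k$ of lengths $L_j$, on each of which $\sqrt t$ is monotone with variation $V_j$. Set $a_j=(L_jV_j^2)^{1/3}$ and $A=\sum_{j=1}^k a_j$, so that $A^3=M_{k+2}(t,\II)$; if $A=0$ then $t$ itself lies in $\overline V(k)\subseteq\overline V(D+k)$ and $u:=t$ works, so assume $A>0$. I would pick $m_j=\lceil Da_j/A\rceil\vee 1$, which satisfies $m_j\ge 1$ and $\sum_jm_j\le D+k$, and subdivide each $I_j$ into sub-intervals $J_{j,1},\ldots,J_{j,m_j}$ on which $\sqrt t$ has variation exactly $V_j/m_j$ (possible by monotonicity). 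On the refined partition I would set $u=\mu_{j,i}^2/Z^2$ on $J_{j,i}$, with $\mu_{j,i}=L_{j,i}^{-1}\int_{J_{j,i}}\sqrt t\,d\mu$ and $Z^2=\sum_{j,i}\mu_{j,i}^2L_{j,i}$, and $u=0$ on $I_0\cup I_{k+1}$. The normalization gives $\int u\,d\mu=1$, while the monotonicity of $\sqrt t$ on $I_j$ transfers to the averages $(\mu_{j,i})_i$, so $u$ is monotone on each $I_j$ and lies in $\overline V(D+k)\cap\FF_{k+2}$.

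\emph{Estimate.} A direct computation yields $\int\sqrt{tu}\,d\mu=Z$, so $h^2(t,u)=1-Z$. Cauchy--Schwarz applied on each $J_{j,i}$ gives $\mu_{j,i}^2 L_{j,i}\le\int_{J_{j,i}}t\,d\mu$ and hence $Z^2\le 1$. Combining with the elementary inequality $1-Z\le 1-Z^2$ produces the key identity
\[
h^2(t,u)\le 1-Z^2=\sum_{j,i}L_{j,i}(\bar t_{j,i}-\mu_{j,i}^2)=\sum_{j,i}L_{j,i}\Var_{J_{j,i}}(\sqrt t),
\]
where $\bar t_{j,i}$ is the average of $t$ on $J_{j,i}$. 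Since $\sqrt t$ is monotone on $J_{j,i}$ with variation $V_j/m_j$, the standard bound $\Var_J(f)\le V_J(f)^2/4$ reduces this to $h^2(t,u)\le\sum_j L_j V_j^2/(4m_j^2)$, and the lower bound $m_j\ge Da_j/A$ forces $L_jV_j^2/m_j^2\le a_jA^2/D^2$. Summing in $j$ yields $A^3/(4D^2)=M_{k+2}(t,\II)/(2D)^2$.

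\emph{Main subtlety.} The delicate point will be the choice of the approximating values: the pointwise-optimal constant on each $J_{j,i}$ (the midrange of $\sqrt t$) would directly give the factor $1/4$ but would not produce a density, while taking $u$ equal to the average of $t$ on each sub-interval would preserve $\int u\,d\mu=1$ at the cost of a factor-$4$ loss in the pointwise bound. Using the normalized values $\mu_{j,i}^2/Z^2$ together with the Cauchy--Schwarz-based passage to $\sum L\Var(\sqrt t)$ is what simultaneously recovers the factor $1/4$ and keeps $u$ a monotone density.
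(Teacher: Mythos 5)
Your proposal is correct and follows essentially the same route as the paper's proof: approximate $\sqrt t$ on each bounded interval $I_j$ by its piecewise-constant local averages on a sub-partition of $m_j$ pieces chosen by the same ceiling formula, bound the per-piece $\IL_2$ error via $\Var\le V^2/4$ (the paper's Lemma~\ref{lem-meanapprox}), normalize to a density, and control the Hellinger distance by that $\IL_2$ error. Your inline derivation $h^2(t,u)=1-Z\le 1-Z^2=\left\|\sqrt t-f\right\|^2$ is exactly the orthogonal-projection argument used to prove Lemma~\ref{L-Hell}, and the equal-variation partitioning (up to the usual care with jump points of a monotone function, which the paper handles by taking $R_j>V_j$ and letting $R_j\to V_j$) plays the role of the second half of Lemma~\ref{lem-debase}.
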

 Applying Corollary~\ref{cor-Fk} leads to the following bound which is valid for all $t\in\FF_{k+2}$ with $M_{k+2}(t)<+\infty$ and whatever the distribution $P_s$ of the observations:
\begin{equation}
C\E\!\cro{h^{2}(s,\widehat s)}\le h^2(s,t)+\inf_{D\ge1}
\cro{\frac{M_{k+2}(t)}{D^2}+{k+D\over n}\log_+^{3}\!\pa{n\over k+D}}.
\label{Eq-22}
\end{equation}
A final optimization with respect to $D$ leads to
\[
C\E\!\cro{h^{2}(s,\widehat s)}\le h^2(s,t)+\left[M_{k+2}(t)\right]^{1/3}n^{-2/3}(\log n)^{2}+kn^{-1}(\log n)^{3}.
\]
Since this result is valid for all densities $t\in\FF_{k+2}$, we can again optimize it with respect to $t$
which finally leads to:
\begin{thm}\label{thm-kmon}
Any $\rho$-estimator $\widehat{s}$ based on the model $\FF_{k+2}$ for some $k\ge1$ satisfies
\[
C\E\!\cro{h^{2}(s,\widehat s)}\le\inf_{\overline{M}>0}\left[h^2\!\left(s,\FF_{k+2}(\overline{M})\right)+(\log n)^{2}\left(\left(\overline{M}n^{-2}\right)^{1/3}\bigvee\left(kn^{-1}\log n\right)\right)\right],
\]
 for all distributions $P_s\in\cP_{\!\mu}$. In particular
\[
\sup_{s\in\FF_{k+2}(\overline{M})}\E\!\cro{h^{2}(s,\widehat s)}\le C(\log n)^{2}\left[\left(\overline{M}n^{-2}\right)^{1/3}\bigvee\left(kn^{-1}\log n\right)\right].
\]
\end{thm}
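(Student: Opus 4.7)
The starting point is the bound~\eqref{Eq-22}, valid for every density $t\in\FF_{k+2}$ with $M_{k+2}(t)<+\infty$. My plan has three steps: first, carry out the optimization over $D\ge 1$ to obtain a bound depending only on $t$; second, restrict $t$ to $\FF_{k+2}(\overline M)$ and take the infimum over such $t$; third, take the infimum over $\overline M>0$. The second assertion of the theorem then follows by specializing the first one to $s\in\FF_{k+2}(\overline M)$, which makes the approximation term vanish.

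For the first step, I would bound the logarithmic factor by $(\log n)^{3}$ (using $\log_+(n/(k+D))\le\log n$) and split the resulting expression as
\[
h^{2}(s,t)+\frac{k(\log n)^{3}}{n}+\inf_{D\ge 1}\cro{\frac{M_{k+2}(t)}{D^{2}}+\frac{D(\log n)^{3}}{n}}.
\]
Minimizing the last bracket on $\R_{+}$ gives (up to a constant) $D^{\star}=(M_{k+2}(t)\,n/(\log n)^{3})^{1/3}$ with optimal value of order $M_{k+2}(t)^{1/3}n^{-2/3}(\log n)^{2}$. When $D^{\star}\ge 1$ I pick $D=\lceil D^{\star}\rceil$; when $D^{\star}<1$ I pick $D=1$, and then $M_{k+2}(t)<(\log n)^{3}/n$, so $M_{k+2}(t)+(\log n)^{3}/n$ is absorbed into $kn^{-1}(\log n)^{3}$ since $k\ge 1$. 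In either regime the outcome is
\[
C\,\E\!\cro{h^{2}(s,\widehat s)}\le h^{2}(s,t)+C'(\log n)^{2}\cro{(M_{k+2}(t)n^{-2})^{1/3}\bigvee (kn^{-1}\log n)}.
\]

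For the second step, I fix $\overline M>0$ and restrict to $t\in\FF_{k+2}(\overline M)$; by definition $M_{k+2}(t)\le\overline M$, so the bracket above is controlled by the one with $M_{k+2}(t)$ replaced by $\overline M$. Taking the infimum over such $t$ converts $h^{2}(s,t)$ into $h^{2}(s,\FF_{k+2}(\overline M))$, which is well-defined because $\FF_{k+2}(\overline M)\supset\FF_{k+2}(0)=\overline V(k)$ is non-empty. Absorbing $C'$ into the universal constant $C$ and taking the infimum over $\overline M>0$ yields the first displayed bound of the theorem. The second (uniform) bound is obtained by choosing $s\in\FF_{k+2}(\overline M)$, whence $h^{2}(s,\FF_{k+2}(\overline M))=0$.

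I do not anticipate a genuine obstacle: the argument is a standard balancing optimization. The only delicate point is to handle the small-$M$ regime where the continuous minimizer $D^{\star}$ falls below $1$, so that the integer constraint $D\ge 1$ becomes active and the residual $M_{k+2}(t)$ must be shown to be dominated by $kn^{-1}(\log n)^{3}$; the hypothesis $k\ge 1$ is exactly what makes this absorption work.
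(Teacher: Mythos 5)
Your proposal is correct and follows essentially the same route as the paper: start from the bound~\eqref{Eq-22}, optimize over $D$ (including the careful handling of the integer constraint $D\ge 1$ when the continuous minimizer is small, which the paper glosses over), then pass from $M_{k+2}(t)$ to $\overline M$ for $t\in\FF_{k+2}(\overline M)$, take the infimum over $t$, and finally over $\overline M$. The remark that $k\ge1$ is what allows the small-$M$ regime to be absorbed into $kn^{-1}(\log n)^3$, and that the sum of the two terms is bounded by twice their maximum, completes the passage to the displayed form with the $\bigvee$, exactly as in the paper.
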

If we want to estimate a bounded unimodal density $s$ with support of finite length $L$, we may build a $\rho$-estimator on $\FF_4$. In such a case, $M_4(s)$ can be bounded by 
$4L\|s\|_\infty\ge4$ (since $s$ is a density, $L\|s\|_\infty\ge1$) and the performance of the $\rho$-estimator for such a unimodal density $s$ will be given by
\[
\E\!\cro{h^{2}(s,\widehat s)}\le C\left(L\|s\|_\infty n^{-2}\right)^{1/3}(\log n)^{2}.
\]
%
\subsection{Piecewise concave-convex densities\label{conc}}
In the previous sections we considered densities $t$ which were piecewise monotone or constant which implied the same properties for $\sqrt{t}$ but it follows from Proposition~\ref{P-approkM} that it is actually the approximation properties of $\sqrt{t}$ that matter. This derives from the fact that the Hellinger distance is an $\IL_{2}$-distance between the square roots of the densities. When going to more sophisticated properties than monotonicity, it is no more the same to state them for $t$ or for $\sqrt{t}$ which accounts for the slightly more complicated structure of this section.
%
\begin{defi}\label{D-Conc}
Let $\II\in\JJ(k)$ be a partition with $k$ intervals. A function $f$ is piecewise convex-concave based on $\II$ if it is either convex or concave on each (open) interval $I_j$ of the partition. The set of all such functions when $\II$ varies in $\JJ(k)$ will be denoted by $\GG^1_k$. For $D\in\N^*$ we denote by $\overline W_1(D)$ the set of all functions $\gamma$ of the form $\gamma=\sum_{j=1}^D\gamma_j\1_{(x_j,x_{j+1}]}$ with $x_1<x_2<\cdots<x_{D+1}$ where $\gamma_j$ is an affine function for all $j$. The sets 
$\FF^1_k$ and $\overline V_1(D)$ are the sets of those densities $t$ such that $\sqrt{t}$ belongs to $\GG^1_k$ and $\overline W_1(D)$ respectively.
\end{defi}
We recall that if $f$ is either concave or convex on some open interval $I$, it is continuous on $I$ and admits on $I$ a right-hand derivative $f'$ which is monotone.

The following result will prove useful to find extremal points of $\FF_k^1$.
\begin{lem}\label{lem-14}
For all $k\in\N^*$, $\GG^1_k\subset \GG_{2k}$.
\end{lem}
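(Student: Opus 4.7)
The plan is to show that on each piece of the original partition where $f$ is convex or concave, $f$ can be split into at most two subintervals on which $f$ is monotone. Starting from a partition with $k$ intervals, this yields a refined partition with at most $2k$ intervals on which $f$ is monotone, proving $f \in \GG_{2k}$.

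More precisely, I would proceed as follows. Take $f \in \GG_k^1$ based on some partition $\II = \{I_0, \ldots, I_{k-1}\} \in \JJ(k)$, and fix one open interval $I_j = (a_j, b_j)$ on which $f$ is (say) convex. The paper already reminds us that $f$ is then continuous on $I_j$ and admits on $I_j$ a right-hand derivative $f'_+$ which is non-decreasing (for the concave case, $f'_+$ is non-increasing; the argument is symmetric). Define
\[
c_j = \inf\{x \in I_j : f'_+(x) \ge 0\}\in[a_j,b_j],
\]
with the convention that $c_j = b_j$ if the set is empty. Since $f'_+$ is monotone, $f'_+ \le 0$ on $(a_j, c_j)$ and $f'_+ \ge 0$ on $(c_j, b_j)$, so $f$ is non-increasing on $(a_j, c_j)$ and non-decreasing on $(c_j, b_j)$. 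Thus $f$ is monotone on each of the at most two open subintervals obtained by inserting $c_j$ into $I_j$.

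Carrying out this splitting on each of the $k$ intervals of $\II$ produces a refined partition $\II'$ obtained by adjoining to the $k-1$ endpoints of $\II$ at most $k$ additional points $c_0, \ldots, c_{k-1}$. The refined partition thus has at most $2k-1$ endpoints, hence at most $2k$ intervals, and by construction $f$ is monotone on each of them. Therefore $f \in \GG_{2k}$, as required.

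The main conceptual point is the one-line observation that monotonicity of the one-sided derivative of a convex or concave function forces at most one change of sign, hence at most one change of monotonicity of $f$ itself on each piece. There is no real obstacle here; the only thing to be slightly careful about is the bookkeeping of endpoints versus intervals when passing from $\II$ to $\II'$, and handling the degenerate cases (e.g. $c_j \in \{a_j, b_j\}$ when $f$ is already monotone on $I_j$) using the convention that an empty interval is simply dropped from the refined partition.
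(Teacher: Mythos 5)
Your proof is correct and follows essentially the same approach as the paper: on each piece of the original partition, the monotonicity of the right-hand derivative of the convex or concave function forces at most one change of monotonicity of $f$, so each piece splits into at most two subintervals, giving at most $2k$ intervals in total. You are somewhat more explicit than the paper in locating the splitting point $c_j$ and in the endpoint bookkeeping, but the underlying idea is identical.
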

\begin{proof}
Since $f\in \GG^1_k$ there exists $\II\in\JJ(k)$ such that $f$ is either convex or concave on each open interval $I_j$ of $\II$. The right-hand derivative $f'$ of $f$ on $I_{j}$ being monotone, the sets $\{x\in I_{j},\ f'\le 0\}$ and $\{x\in I_{j},\ f_{j}'>0\}$ are two disjointed subintervals of $I_{j}$ on which $f$ is monotone.
\end{proof}
\begin{prop}\label{prop-extremal2}
For all $D,k\in\N^*$, the elements $\overline f\in\FF^1_k\cap\overline V_1(D)$ are extremal in $\FF^1_k$ with degrees not larger than $12(D+k+1)$.
\end{prop}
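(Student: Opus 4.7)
The plan is to mimic the argument of Proposition~\ref{prop-extremal1}, but at the level of square roots, since $\FF^1_k$ and $\overline V_1(D)$ are defined through $\sqrt{t}$, and then to fall back to piecewise monotonicity via Lemma~\ref{lem-14}. As in the piecewise monotone case, the goal is to apply Proposition~\ref{prop-fonda}, i.e., to bound the VC dimension of $\CC(\FF^1_k,\overline f,\lambda)$ uniformly in $\lambda\ge 0$ by $6(D+k+1)$, which then yields the claimed degree $12(D+k+1)$.

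First I would reduce the problem from $f$ to $\sqrt{f}$ by the observation that, since $f,\overline f\ge0$ and $\lambda\ge 0$, one has the equivalence
\[
\{x\in\R\,|\,f(x)-\lambda\overline f(x)>0\}=\{x\in\R\,|\,\sqrt{f}(x)-\sqrt{\lambda}\sqrt{\overline f}(x)>0\}.
\]
This replaces the study of $f-\lambda\overline f$ by that of $g:=\sqrt{f}-\sqrt{\lambda}\sqrt{\overline f}$, which has a much more tractable structure: $\sqrt{f}\in\GG^1_k$ is based on some partition $\II_0\in\JJ(k)$ (with $k-1$ endpoints) on each piece of which it is convex or concave, and $\sqrt{\overline f}\in\overline W_1(D)$ is piecewise affine on a partition $\II_1$ with $D+1$ endpoints (it is affine on the $D$ bounded pieces of $\II_1$ and identically zero, hence affine, on the two unbounded pieces).

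Next I would pass to the common refinement $\II_2=\II_0\vee\II_1$, which has at most $(k-1)+(D+1)=D+k$ endpoints, hence at most $D+k+1$ intervals. On each interval of $\II_2$, $\sqrt{f}$ is convex or concave while $\sqrt{\overline f}$ is affine, so their difference $g$ is convex or concave. This gives $g\in\GG^1_{D+k+1}$. Applying Lemma~\ref{lem-14} yields $g\in\GG_{2(D+k+1)}$, and then Lemma~\ref{lem-km} (with $a=0$) shows that $\{g>0\}$ is a union of at most $3(D+k+1)$ intervals of $\R$. Lemma~\ref{lem-vcm} then bounds the VC dimension of $\CC(\FF^1_k,\overline f,\lambda)$ by $6(D+k+1)$, and an appeal to Proposition~\ref{prop-fonda} concludes with degree at most $12(D+k+1)$.

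The main obstacles are essentially bookkeeping: first, one must be careful that the representation of $\sqrt{\overline f}$ from Definition~\ref{D-Conc} really gives an affine behaviour (including the value zero) on \emph{every} interval of $\II_1$, not merely on the $D$ bounded ones; and second, one must correctly combine the endpoint counts of $\II_0$ and $\II_1$ and propagate them through Lemmas~\ref{lem-14}, \ref{lem-km} and~\ref{lem-vcm} so that the final constant $12(D+k+1)$ comes out, rather than some larger multiple. Both of these are straightforward but easy to miscount by factors of 2.
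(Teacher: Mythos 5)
Your proof is correct and follows essentially the same argument as the paper: common refinement of the two partitions, Lemma~\ref{lem-14}, Lemma~\ref{lem-km}, Lemma~\ref{lem-vcm}, and finally Proposition~\ref{prop-fonda}, yielding the same constant $12(D+k+1)$. The only difference (equally valid and slightly more direct) is that you pass to square roots by rewriting $\{f-\lambda\overline f>0\}=\{\sqrt{f}-\sqrt{\lambda}\,\sqrt{\overline f}>0\}$, whereas the paper first proves that $\sqrt{\overline f}$ is extremal in $\GG^1_k$ and then transfers to $\FF^1_k$ via Lemma~\ref{L-trans}~(2) with $\alpha=1/2$.
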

\begin{proof}
Let us consider $g\in\GG^1_k$ and $\overline g\in \GG^1_k\cap \overline{W}_1(D)$. There exists a partition $\II_0$ with $k-1$ endpoints such that $g$ is either convex or concave on each interval of $\II_0$ and a partition $\II_{1}$ with $D+1$ endpoints such that $\overline g$ is affine on each interval of $\II_{1}$. The partition $\II_0\vee\II_1$ contains at most $k+D+1$ intervals and on each of these intervals  
$g-\lambda \overline g$ is either convex or concave for all $\lambda\in\R_+$. Hence, the function $g-\lambda \overline g$ belongs to $\GG_{k+D+1}^{1}$ which is a subset of $\GG_{2(k+D+1)}$ by Lemma~\ref{lem-14}. It then follows from Lemma~\ref{lem-km} that $\{g-\lambda \overline g>0\}$ is the union of at most $3(k+D+1)$ intervals. Since $\lambda$ is arbitrary in $\R_{+}$ we conclude with Lemma~\ref{lem-vcm} that $\CC(\GG_{k}^{1},\overline g)$ is VC with dimension not larger than $6(D+k+1)$, which shows by Proposition~\ref{prop-fonda} that the elements $\overline{g}\in\GG^1_k\cap\overline{W}_1(D)$ are extremal in $\GG^1_k$ with degrees not larger than $12(D+k+1)$. The conclusion follows from an application of Lemma~\ref{L-trans} of Section~\ref{prelim} with $\alpha=1/2$.
\end{proof}
 We may now apply Theorem~\ref{thm-main} with $\overline \Lambda=\FF_{k}^{1}\bigcap\cro{\bigcup_{D\ge 1}\overline V_{1}(D)}$ which consists of extremal points of $\FF_{k}^{1}$ and deduce the following risk bound from Proposition~\ref{prop-extremal2}:
\begin{cor}\label{cor-cc}
For all $k\ge2$, any $\rho$-estimator on $\FF_k^1$ satisfies for all distributions $P_s\in\cP_{\!\mu}$,
\[
C\E\!\cro{h^{2}(s,\widehat s)}\le\inf_{D\ge1}\cro{\inf_{\overline s\in\overline V_1(D)\cap\FF^1_k}h^{2}(s,\overline s)+{k+D\over n}\log_+^{3}\!\pa{n\over k+D}}
\]
where $C\in(0,1]$ is a universal constant.
\end{cor}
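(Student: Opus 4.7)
The plan is to apply Theorem~\ref{thm-main} directly, taking the model to be $\FF_k^1$ and the candidate extremal set to be $\overline{\Lambda} = \FF_k^1 \cap \bigcup_{D\ge 1}\overline V_1(D)$. Proposition~\ref{prop-extremal2} already supplies everything needed to verify that $\overline{\Lambda}$ consists of extremal points: any $\overline s \in \overline V_1(D) \cap \FF_k^1$ is extremal in $\FF_k^1$ with degree $d(\overline s) \le 12(D+k+1)$. Since $k \ge 2$ and $D \ge 1$, we have $D+k+1 \le 2(k+D)$, so in particular $d(\overline s) \le 24(k+D)$.

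Substituting this $\overline{\Lambda}$ into Theorem~\ref{thm-main} yields, for some universal $C_0 \in (0, 1]$,
\[
C_0\, \E\!\cro{h^{2}(s,\widehat s)} \le \inf_{D\ge 1}\inf_{\overline s \in \overline V_1(D) \cap \FF_k^1}\cro{h^{2}(s,\overline s) + \frac{d(\overline s)}{n}\log_+^{3}\!\pa{\frac{n}{d(\overline s)}}}.
\]
The only remaining task is to replace $d(\overline s)$ by $k+D$ in the logarithmic term, at the cost of a universal multiplicative constant. Writing $\phi(x) = x\log_+^{3}(n/x)$, a short calculus exercise shows that $\phi$ is increasing on $(0,n/e^{3}]$, decreasing on $[n/e^{3},n/e]$ and linear on $[n/e,n]$, with global maximum $27 n/e^{3}$ attained at $x=n/e^{3}$. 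From this, a straightforward case analysis splitting on whether $k+D \le n/e^{3}$ or $k+D > n/e^{3}$ delivers a universal $C_1$ such that $\phi(d) \le C_1\,\phi(k+D)$ whenever $1 \le d \le 24(k+D)$. Applying this inequality with $d = d(\overline s)$ upper-bounds the bracketed term by
\[
h^{2}(s,\overline s) + C_1\,\frac{k+D}{n}\log_+^{3}\!\pa{\frac{n}{k+D}},
\]
and setting $C = C_0/C_1$ produces the stated bound.

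No substantive obstacle is expected here. The combinatorial core of the argument already lives in Proposition~\ref{prop-extremal2} (itself proved via Lemmas~\ref{lem-14}, \ref{lem-km} and \ref{lem-vcm}), and the probabilistic core is encapsulated in Theorem~\ref{thm-main}. The only routine check is the monotonicity/rescaling property of $\phi$ described above, which is elementary real analysis and of the same flavour as the constant-absorption steps used implicitly in Corollary~\ref{cor-Fk}.
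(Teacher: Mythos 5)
Your proposal is correct and follows the same route as the paper: apply Theorem~\ref{thm-main} with $\overline{\Lambda}=\FF_k^1\cap\bigcup_{D\ge1}\overline V_1(D)$ and invoke Proposition~\ref{prop-extremal2} to bound the degrees. You are merely more explicit than the paper about absorbing the factor coming from $d(\overline s)\le 12(D+k+1)$ into the universal constant via the monotonicity and boundedness of $x\mapsto x\log_+^3(n/x)$, a routine step the paper leaves implicit.
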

The control of the approximation term $\inf_{\overline s\in\overline V_1(D)\cap\FF^1_k}h(s,\overline s)$ is analogue to the one we derived in the previous section for $\inf_{\overline s\in\FF_k\bigcap\overline{V}(D)}h(t,\overline s)$ but is based on a new functional:
\begin{defi}
Let $t\in\FF^1_k$ and $\II=\{I_0,\ldots, I_{k-1}\}\in\JJ(k)$ a partition on which $t$ is based, that is, $\sqrt{t}$ is either convex or concave with monotone right-hand derivative $\left(\sqrt{t}\right)'$ on each $I_j$. Using the convention $(+\infty)\times0=0$, we define 
\[
M_{k,1}(t,\II)=\left[\sum_{j=0}^{k-1}\left[[\ell(I_j)]^3V_{I_j}^2\left(\left(\sqrt{t}\right)'\right)\right]^{1/5}\right]^5\le+\infty,
\]
where $V_{I_j}\left(\left(\sqrt{t}\right)'\right)$ is the variation of $\left(\sqrt{t}\right)'$ on $I_j$. The functional $M_{k,1}$ is defined on $\FF^1_k$ as
\[
M_{k,1}(t)=\inf_{\II}M_{k,1}(t,\II)\quad\mbox{for all }t\in\FF^{1}_{k},
\]
where the infimum runs among all partitions $\II\in\JJ(k)$ on which $t$ is based. For $0\le \overline{M}<+\infty$ and $k\in\N^*$, we denote by $\FF^1_{k+2}(\overline{M})$ the subset of $\FF^1_{k+2}$ of those densities $t$ such that $M_{k+2,1}(t)\le\overline{M}$.
\end{defi}
Note that if $M_{k,1}(t,\II)$ is finite, the density $t$ is necessarily zero on the two extremal (unbounded) intervals of the partition $\II$ and therefore $k\ge3$. An analogue of Lemma~\ref{lem-comp} holds for the functional $M_{k,1}(t)$ with a similar proof, saying that if $l>k$ and $t\in\FF^1_{k}\subset \FF^1_{l}$,
then $M_{l,1}(t)\le M_{k,1}(t)$. We omit the details.
%
\begin{prop}\label{P-approcc}
Let $k\ge1$ and $t\in\FF^1_{k+2}$ with $M_{k+2,1}(t)<+\infty$. Then, for $D\ge1$,
\[
h^2\!\left(t,\overline{V}_1\!\left(2(D+k)\strut\right)\cap\FF^1_{k+2}\right)\le(D/2)^{-4}M_{k+2,1}(t).
\]
\end{prop}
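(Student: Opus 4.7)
The strategy is to approximate $\sqrt{t}$ by a non-negative piecewise affine function $\tilde g$ and then normalize $\tilde g^2$ to obtain a density $u\in\overline V_1(2(D+k))\cap\FF^1_{k+2}$. First, I pick a partition $\II=\{I_0,\ldots,I_{k+1}\}\in\JJ(k+2)$ on which $\sqrt t$ is piecewise convex--concave and $M_{k+2,1}(t,\II)$ is arbitrarily close to $M_{k+2,1}(t)$; the convention $(+\infty)\times 0=0$ together with finiteness of $M_{k+2,1}(t,\II)$ forces $t\equiv 0$ on the two unbounded extremal intervals $I_0,I_{k+1}$, leaving only the $k$ bounded middle intervals $I_1,\ldots,I_k$ (of length $L_j>0$ and derivative-variation $V_j=V_{I_j}((\sqrt t)')$) to approximate. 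On each such $I_j$, I subdivide into $D_j\ge 1$ sub-intervals (to be optimised below) and let $\tilde g$ be the continuous piecewise linear interpolant of $g:=\sqrt t$ on these subdivisions, extended by zero on $I_0\cup I_{k+1}$. Since the linear interpolant of a convex (resp.\ concave) function at any subdivision is itself convex (resp.\ concave) and agrees with $g$ at the subdivision endpoints, $\tilde g$ is non-negative, belongs to $\GG^1_{k+2}$ (based on $\II$), and to $\overline W_1(\sum_j D_j)$.

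The second step is the local $L^2$-error bound. On a sub-interval of length $h$ where $g'$ has total variation $V_{\mathrm{loc}}$, the identity $\tilde g(x)-g(x)=\int_0^h[x/h-\1_{y<x}](g'(y)-c)\,dy$ (valid for every constant $c$) combined with Cauchy--Schwarz, the choice $c=h^{-1}\int_0^h g'$, and the observation that a monotone function with total variation $V_{\mathrm{loc}}$ has variance at most $V_{\mathrm{loc}}^2/4$, yields $\int_0^h(\tilde g-g)^2\,dx\le h^3V_{\mathrm{loc}}^2/24$. Getting the optimal $D_j^{-4}$ decay then requires a specific subdivision: I place the interior endpoints of $I_j=(a_j,a_{j+1})$ at the level sets of the non-decreasing function $F_j(x)=(x-a_j)/L_j+((\sqrt t)'(x+)-(\sqrt t)'(a_j+))/V_j\in[0,2]$, so that $h_{j,i}/L_j+V_{j,i}/V_j=2/D_j$ on every sub-interval. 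A one-variable AM--GM applied to $u^3v^2$ under $u+v=2/D_j$ gives $h_{j,i}^3V_{j,i}^2\lesssim L_j^3V_j^2D_j^{-5}$, and summing over $i$ produces $\int_{I_j}(\tilde g-g)^2\lesssim L_j^3V_j^2D_j^{-4}$.

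To distribute the integer budget optimally across intervals I set $D_j=\lceil(2D+k)\rho_j\rceil$ with $\rho_j=(L_j^3V_j^2)^{1/5}/\sum_{j'}(L_{j'}^3V_{j'}^2)^{1/5}$, which guarantees $\sum_jD_j\le 2(D+k)$ and $D_j\ge(2D+k)\rho_j$. A H\"older-type summation then gives
\[
\|\tilde g-g\|_2^2\;\lesssim\;\sum_jL_j^3V_j^2D_j^{-4}\;\le\;(2D+k)^{-4}\Big[\textstyle\sum_j(L_j^3V_j^2)^{1/5}\Big]^5\;\le\;(2D)^{-4}M_{k+2,1}(t,\II).
\]
Finally, I set $u=\tilde g^2/\|\tilde g\|_2^2$, so that $\sqrt u=\tilde g/\|\tilde g\|_2$ is still in $\overline W_1(2(D+k))\cap\GG^1_{k+2}$ and $u\in\overline V_1(2(D+k))\cap\FF^1_{k+2}$. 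From $|\|\tilde g\|_2-1|=|\|\tilde g\|_2-\|\sqrt t\|_2|\le\|\tilde g-g\|_2$ and the triangle inequality, $\|\sqrt t-\sqrt u\|_2\le 2\|\tilde g-g\|_2$, hence $h^2(t,u)\le 2\|\tilde g-g\|_2^2$; letting $\II$ realise the infimum in $M_{k+2,1}(t)$ and checking that the numerical constants (from $1/24$, AM--GM and H\"older rounding) fit comfortably below $(D/2)^{-4}=16D^{-4}$ completes the proof.

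The hard part is obtaining the sharp $D_j^{-4}$ local rate: an equal-length subdivision only gives $D_j^{-3}$ and an equal-variation subdivision only $D_j^{-2}$. The subdivision driven by the combined profile $F_j$ is precisely what balances the two contributions and matches the $5$-th power structure of $M_{k+2,1}$; once this local rate is in hand, the $5/1$-H\"older allocation of pieces across the $k$ intervals and the passage from $L^2$ to Hellinger are routine.
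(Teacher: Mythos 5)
Your proof is correct and, structurally, matches the paper's: approximate $\sqrt{t}$ on each bounded interval $I_j$ of the underlying partition by a piecewise-affine interpolant on $D_j$ pieces, bound the local $\IL_2$ error by a constant times $[\ell(I_j)]^3V_{I_j}^2\!\left(\left(\sqrt{t}\right)'\right)D_j^{-4}$, allocate the integer budget $\{D_j\}$ across the $k$ intervals by a $5$-th-power H\"older optimization with ceiling rounding (so that $\sum_j D_j\le 2(D+k)$), and renormalize to pass to Hellinger. The one genuine variation is in how you get the local $D_j^{-4}$ rate: the paper uses the sup-norm bound of Lemma~\ref{L-conc} combined with Lemma~\ref{lem-debase} to produce, for each $I_j$, at most $2D_j$ sub-intervals with length $\le\ell(I_j)/D_j$ and derivative variation $\le V_j/D_j$; you instead obtain a direct $\IL_2$ bound of $h^3V_{\mathrm{loc}}^2/24$ from the exact representation $\tilde g(x)-g(x)=\int_0^h\bigl(x/h-\1_{y<x}\bigr)\bigl(g'(y)-c\bigr)\,dy$ together with Cauchy--Schwarz and Lemma~\ref{lem-meanapprox}, and you build a single subdivision from the combined profile $F_j$ so that $h_{j,i}/L_j+V_{j,i}/V_j\le 2/D_j$, which gives the $D_j^{-4}$ decay with only $D_j$ pieces and a slightly better constant. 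Both versions leave ample room for the stated $(D/2)^{-4}=16D^{-4}$ (the paper actually obtains $\frac{1}{16}D^{-4}M_{k+2,1}(t)$). Two minor points worth fixing: (i) as written, $F_j$ is increasing only when $\left(\sqrt{t}\right)'$ is non-decreasing; on concave pieces you should replace $\left(\sqrt{t}\right)'(x+)-\left(\sqrt{t}\right)'(a_j+)$ by the running variation $V_{(a_j,x)}\!\left(\left(\sqrt{t}\right)'\right)$ so that $F_j$ runs from $0$ to $2$ in all cases (and since $F_j$ may jump, the level-set endpoints should be taken as infima, giving $\le 2/D_j$ rather than equality, which AM--GM still handles); (ii) the paper's Lemma~\ref{L-Hell} is a bit sharper than your triangle inequality, yielding $h^2(t,u)\le\|\sqrt{t}-\tilde g\|^2$ rather than $2\|\sqrt{t}-\tilde g\|^2$, but this only changes constants.
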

Now arguing as we did in the previous section we derive from Corollary~\ref{cor-cc} and Proposition~\ref{P-approcc} our concluding result.
%
\begin{thm}\label{thm-kcc}
Any $\rho$-estimator $\widehat{s}$ based on the model $\FF_{k+2}^1$ with $k\ge1$ satisfies, for all $P_s\in\cP_{\!\mu}$,
\[
C\E\!\cro{h^{2}(s,\widehat s)}\le\inf_{\overline{M}>0}\left[h^2\!\left(s,\FF_{k+2}^1(\overline{M})\right)+\left(\left(\overline{M}n^{-4}\right)^{1/5}(\log n)^{12/5}\right)\bigvee\left(kn^{-1}(\log n)^3\right)\right].
\]
If, in particular, $s\in\FF_{k+2}^1(\overline{M})$, then
\[
\E\!\cro{h^{2}(s,\widehat s)}\le C\left[\left(\left(\overline{M}n^{-4}\right)^{1/5}(\log n)^{12/5}\right)\bigvee\left(kn^{-1}(\log n)^{3}\right)\right].
\]
\end{thm}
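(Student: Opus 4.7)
The plan is to mirror the derivation of Theorem~\ref{thm-kmon} from Corollary~\ref{cor-Fk} and Proposition~\ref{P-approkM}, substituting the concave-convex analogues Corollary~\ref{cor-cc} and Proposition~\ref{P-approcc}. First I would apply Corollary~\ref{cor-cc} to the model $\FF^1_{k+2}$, which gives
\[
C\E\!\cro{h^2(s,\widehat s)}\le\inf_{D\ge 1}\cro{\inf_{\overline s\in\overline V_1(D)\cap\FF^1_{k+2}}h^2(s,\overline s)+\frac{k+D}{n}\log_+^3\!\pa{\frac{n}{k+D}}}.
\]
For a fixed $t\in\FF^1_{k+2}$, I would use the triangle inequality $h^2(s,\overline s)\le 2h^2(s,t)+2h^2(t,\overline s)$ to separate a squared bias term $h^2(s,t)$ from an approximation error measured at $t$, and then invoke Proposition~\ref{P-approcc} to control the latter.

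The next step is to convert Proposition~\ref{P-approcc}, which bounds $h^2(t,\overline V_1(2(D'+k))\cap\FF^1_{k+2})$ by $(D'/2)^{-4}M_{k+2,1}(t)$, into a clean bound in the variable $D$ appearing in Corollary~\ref{cor-cc}. Setting $D=2(D'+k)$ and restricting to $D\ge 4k$ so that $D'=D/2-k\ge D/4$, I obtain
\[
h^2\!\pa{t,\overline V_1(D)\cap\FF^1_{k+2}}\le C'\,\frac{M_{k+2,1}(t)}{D^4}.
\]
Combining with Corollary~\ref{cor-cc}, the bound for $t\in\FF^1_{k+2}$ takes the form
\[
C\E\!\cro{h^2(s,\widehat s)}\le h^2(s,t)+C''\!\cro{\frac{M_{k+2,1}(t)}{D^4}+\frac{k+D}{n}\log_+^3\!\pa{\frac{n}{k+D}}}
\]
for any admissible $D$.

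The main step is the optimization in $D$. Balancing $M_{k+2,1}(t)/D^4$ against $D(\log n)^3/n$ points to $D^{\star}\sim(M_{k+2,1}(t)\,n/(\log n)^3)^{1/5}$, making both terms of order $(M_{k+2,1}(t)/n^4)^{1/5}(\log n)^{12/5}$. In the opposite regime, where $D^{\star}$ falls below $4k$, I would take $D\sim k$ and observe that $M_{k+2,1}(t)/k^4$ is itself then bounded by $k(\log n)^3/n$, so the per-$t$ bound is the maximum of the two rates. Taking the infimum over $t\in\FF^1_{k+2}(\overline M)$ replaces $h^2(s,t)$ by $h^2(s,\FF^1_{k+2}(\overline M))$, and optimizing in $\overline M$ yields the first inequality; the second is its specialization to $s\in\FF^1_{k+2}(\overline M)$. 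The only delicate point I anticipate is the index shift $D\mapsto 2(D+k)$ in Proposition~\ref{P-approcc}, which forces a two-regime optimization rather than a single clean balance; both regimes produce the same exponents in $\overline M$ and $n$, so this remains bookkeeping rather than a genuine difficulty.
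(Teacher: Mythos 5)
Your proposal is correct and follows essentially the same route the paper takes (the paper itself just says ``arguing as we did in the previous section''). The only cosmetic difference is your handling of the index shift: you rename the argument of $\overline V_1$ and restrict to $D\ge 4k$, whereas the paper simply substitutes $D\mapsto 2(D+k)$ directly into the infimum of Corollary~\ref{cor-cc} (the penalty $(k+2)+2(D+k)$ stays of order $k+D$), which avoids an explicit two-regime split; both ways of bookkeeping produce the stated $\big(\overline M n^{-4}\big)^{1/5}(\log n)^{12/5}\vee kn^{-1}(\log n)^3$ bound.
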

%
\subsection{Log-concave densities\label{lconc}}
We now want to investigate a situation which is close to the previous one, the case of log-concave densities on the line. These are densities of the form $\1_I\exp(g)$ for some open interval $I$ of $\R$, possibly of infinite length, and some concave function $g$ on $I$. Let us denote by $\FF'$ the set of all such densities and by $\overline{V}'(D)$ the subset of $\FF'$ of those densities for which $g$ is piecewise affine on $I$ with $D$ pieces. For instance, the exponential density belongs to $\overline{V}'(1)$ while the Laplace density belongs to $\overline{V}'(2)$. Also note that if $\1_I\exp(g)$ is log-concave, the same holds for its square root $\1_I\exp(g/2)$.
\begin{prop}\label{prop-extremal3}
For all $D\in\N^*$, the elements of $\,\overline{V}'(D)$ are extremal in $\FF'$ with degrees not larger than $12(D+2)+4$. 
\end{prop}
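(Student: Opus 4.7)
The plan is to apply Proposition~\ref{prop-fonda} directly. Fix $\overline f=\1_{\overline I}\exp(\overline g)\in\overline V'(D)$ with $\overline g$ piecewise affine on the open interval $\overline I$ with $D$ pieces. For any $\lambda\ge 0$ and any $f=\1_I\exp(g)\in\FF'$, I would bound the number of intervals needed to describe the set $\{x\in\R\,|\,f(x)-\lambda\overline f(x)>0\}$ and then invoke Lemma~\ref{lem-vcm}.

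For $\lambda=0$ the set is simply $I$, a single interval. For $\lambda>0$ I would split $\R$ into three regions according to the two supports. Outside $I$ we have $f=0\le\lambda\overline f$, so nothing is contributed. On $I\setminus\overline I$ we have $\overline f=0$ and $f>0$, so the whole region belongs to the set; since $I$ and $\overline I$ are intervals, $I\setminus\overline I$ consists of at most two intervals. On $I\cap\overline I$, where both densities are strictly positive, the inequality $f>\lambda\overline f$ is equivalent to $g-\overline g>\log\lambda$. Partition $I\cap\overline I$ into the at most $D$ subintervals on which $\overline g$ is affine; on each such piece $g-\overline g$ is concave (concave minus affine), so its super-level set is an interval. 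This contributes at most $D$ intervals, giving a total of at most $D+2$ intervals for the whole set.

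Applying Lemma~\ref{lem-vcm}, $\CC(\FF',\overline f,\lambda)$ is then a VC-class with dimension at most $2(D+2)$, and Proposition~\ref{prop-fonda} yields that $\overline f$ is extremal in $\FF'$ with degree at most $4(D+2)$, which is well within the claimed bound $12(D+2)+4$. I do not anticipate any genuine obstacle in carrying this out: the argument is an elementary case analysis, and the only structural input is the classical fact that super-level sets of concave functions are intervals, which transfers cleanly to $g-\overline g$ piece-by-piece because the affine structure of $\overline g$ preserves the concavity of $g$ on each piece.
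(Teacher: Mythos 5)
Your proof is correct, and it actually takes a more direct route than the paper's, yielding a sharper constant. The paper defines $h=(g-\overline g)\1_{I\cap J}+(\log\lambda)\1_{(I\cap J)^c}$, observes that $h\in\GG^1_{D+2}$, then invokes Lemma~\ref{lem-14} to get $h\in\GG_{2(D+2)}$ and Lemma~\ref{lem-km} to bound the super-level set of $h$ by $3(D+2)$ intervals, arriving at a total of $3(D+2)+1$ intervals and hence degree at most $12(D+2)+4$. You instead exploit the elementary fact that the super-level set of a concave function on an interval is convex, hence an interval: on each of the at most $D$ affine pieces of $\overline g$ inside $I\cap\overline I$, the function $g-\overline g$ is concave, so contributes at most one interval, giving a total of at most $D+2$ intervals and degree at most $4(D+2)$. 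This bypasses the two intermediate lemmas entirely and is also tighter, though of course it fits within the paper's stated bound. The only point worth making explicit (which you gloss over with ``transfers cleanly'') is that $\overline g$, being concave and piecewise affine, is automatically continuous on the interior of $\overline I$, so there is no pathology at the break-points of the affine pieces and the super-level sets on adjacent pieces either abut or are separated, never creating extra components; your count of at most $D$ intervals on $I\cap\overline I$ is thus justified.
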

\begin{proof}
Let us consider $\1_I\exp(g)\in\FF'$ and $\1_J\exp(\overline{g})\in\overline{V}'(D)$. Then 
the set on which $\1_I\exp(g)>\lambda\1_J\exp(\overline{g})$, with $\lambda\ge0$ is the subset of $I$ on which
\[
g>\log\lambda+\log\1_J+\overline{g},
\]
with the convention that $\log0=-\infty$. If $\lambda=0$ it is the set $I$ itself. Otherwise
it is equal to the union of $I\cap J^c$ and $I\cap J\cap\{g-\overline{g}>\log\lambda\}$. Since on the interval $I\cap J$, $g$ is concave and $\overline g$ piecewise affine with at most $D$ pieces, the function $h=(g-\overline g)\1_{I\cap J}+(\log\lambda)\1_{(I\cap J)^c}$ is piecewise concave on $\R$ with at most $D+2$ pieces. Hence $h$ belongs to $\GG_{D+2}^{1}$ and by Lemma~\ref{lem-14} it also belongs to $\GG_{2(D+2)}$ and it follows from Lemma~\ref{lem-km} that $I\cap J\cap\{g-\overline{g}>\log\lambda\}=\{h>\log \lambda\}$ is the union of at most $3(D+2)$ intervals. Consequently, the set $\{\1_I\exp(g)>\lambda\1_J\exp(\overline{g})\}$ is the union of at most $3(D+2)+1$ intervals and we derive from Lemma~\ref{lem-vcm} that the VC-dimension of $\CC(\FF',\1_J\exp(\overline{g}))$ is not larger than $6(D+2)+2$. The conclusion then follows from Proposition~\ref{prop-fonda}. 
\end{proof}
We may now apply Theorem~\ref{thm-main} with $\overline \Lambda=\bigcup_{D\ge 1}\overline V'(D)$ and use Proposition~\ref{prop-extremal3} to derive the following risk bound.
\begin{cor}\label{cor-lc}
Any $\rho$-estimator on $\FF'$ satisfies, for all distributions $P_s\in\cP_{\!\mu}$,
\[
C\E\!\cro{h^{2}(s,\widehat s)}\le\inf_{D\ge1}\cro{\inf_{\overline s\in\overline V'(D)}h^{2}(s,\overline s)+{D\over n}\log_+^{3}\!\pa{n\over D}}
\]
where $C\in(0,1]$ is a universal constant.
\end{cor}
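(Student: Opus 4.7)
The plan is to apply Theorem~\ref{thm-main} to the model $\overline S = \FF'$ with the specific choice $\overline \Lambda = \bigcup_{D \ge 1} \overline V'(D)$, and then replace the degree $d(\overline s)$ by $D$ in the resulting bound.

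First, I would check that the hypotheses of Theorem~\ref{thm-main} are satisfied. The set $\overline \Lambda$ is non-void since, for instance, the exponential density lies in $\overline V'(1)$. Proposition~\ref{prop-extremal3} asserts that for every $D \ge 1$ and every $\overline s \in \overline V'(D)$, the element $\overline s$ is extremal in $\FF'$ with $d(\overline s) \le 12(D+2)+4$, hence $d(\overline s) \le 40 D$ for all $D \ge 1$. Applying Theorem~\ref{thm-main} and then restricting the infimum to each slice $\overline V'(D) \subset \overline \Lambda$ (which only enlarges it), I obtain
\[
C\,\E\!\cro{h^{2}(s,\widehat s)} \le \inf_{D \ge 1}\inf_{\overline s \in \overline V'(D)}\cro{h^{2}(s,\overline s) + \frac{d(\overline s)}{n}\log_+^{3}\!\pa{\frac{n}{d(\overline s)}}}.
\]

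Second, I would remove the dependence on $d(\overline s)$ in favor of $D$. The key observation is that for any integers $d, D \ge 1$ with $d \le 40 D$ one has
\[
\frac{d}{n}\log_+^{3}\!\pa{\frac{n}{d}} \le C'\,\frac{D}{n}\log_+^{3}\!\pa{\frac{n}{D}}
\]
for some absolute constant $C'$. Applied to $d = d(\overline s)$, this allows one to pull the deviation term out of the inner infimum and absorb $C'$ into the universal constant $C$, delivering exactly the statement of Corollary~\ref{cor-lc}.

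The only non-routine step is this last inequality. The function $g(x) = x \log_+^{3}(n/x)$ is not globally monotone: it is increasing on $(0, n/e^3]$, decreasing on $[n/e^3, n/e]$ and linear (hence increasing) on $[n/e, n]$, with global maximum $g(n/e^3) = 27 n/e^3$. I would therefore split according to the positions of $d$ and $40 D$ relative to the critical points $n/e^3$ and $n/e$. On the region where $40 D \le n/e^3$, monotonicity of $g$ plus monotonicity of $\log_+$ gives $g(d) \le g(40 D) \le 40\, g(D)$ directly. In the remaining regions one uses the bound $g(d) \le 27 n/e^3$ together with an explicit lower bound on $g(D)$ (e.g.\ $g(D) \ge D \ge n/(40 e)$ when $D \ge n/(40 e^3)$) to obtain the required constant-factor comparison. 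The constant $C' = 40$ suffices, but this bookkeeping is essentially the only technical content of the proof---the substantive work has already been done in Theorem~\ref{thm-main} and in Proposition~\ref{prop-extremal3}.
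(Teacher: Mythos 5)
Your proof follows exactly the paper's one-line argument (apply Theorem~\ref{thm-main} with $\overline\Lambda=\bigcup_{D\ge1}\overline V'(D)$ and invoke Proposition~\ref{prop-extremal3}), and you additionally spell out the comparison $d\log_+^3(n/d)\le C'D\log_+^3(n/D)$ for $d\le 40D$ that the paper leaves implicit. The argument is correct; note only the slip in your lower bound for $g(D)$, which should read $D\ge n/(40e^3)$ rather than $D\ge n/(40e)$ (and the linear regime of $g$ extends over all of $[n/e,\infty)$, covering the case $d>n$).
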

In particular, if $s\in\overline V'(D)$,
\[
\E\!\cro{h^{2}(s,\widehat s)}\le C{D\over n}\log_+^{3}\!\pa{n\over D}, 
\]
which means that the elements of $\overline V'(D)$ can be estimated by the $\rho$-estimator at a parametric rate, up to some $(\log n)^3$ factor. This is the case for all uniform densities, for exponential densities and their translates and for the Laplace density among many others.
%
\begin{remark}
For simplicity we have restricted our study to log-concave densities but we could as well handle the case of piecewise log-concave densities with several pieces, that is densities of the form $\sum_{j=1}^{k}\1_{I_{j}}\exp(g_{j})$ for concave functions $g_{j}$. The extension would be similar to that which leads from monotone to piecewise monotone and is straightforward. 
\end{remark}
\section{Model selection}\label{sect-MS}
All results of Sections~\ref{histo} to \ref{conc} were based on the use of a single model: $\overline{V}(D)$ in Section~\ref{histo}, $\FF_{k+2}$ in Section~\ref{Sect6} and $\FF_k^1$ in Section~\ref{conc}, which implies that our risk bounds depend on $D$ in the first case and on $k$ in the other cases. In order to get the best possible value of either $D$ or $k$ for the unknown distribution $P_s$, we may use a selection procedure. There are different ways to do this but we shall explain how to do it using Theorem~9 of Birg\'e~\citeyearpar[Section~9]{MR2219712}. To simplify the presentation, we assume that the number $n$ of observations is even with $n=2p$ and we split the sample $\bm{X}=(X_1,\ldots,X_n)$ into two parts of size $p$, $\bm{X}_1$ and $\bm{X}_2$. We also consider all the models $\FF_{j+2}$, $j\ge1$, and $\FF_{k+2}^1$, $k\ge1$, simultaneously. For each of these models we fix a weight $\Delta(j)=j$ and $\Delta(k)=k$. It follows that
\begin{equation}
\sum_{j\ge1}\exp[-\Delta(j)]+\sum_{k\ge1}\exp[-\Delta(k)]=\frac{2}{e-1}.
\label{eq-delta}
\end{equation}
We may now use each of our models to build a $\rho$-estimator based on the sample $\bm{X}_1$. This results in a family of estimators $\widehat{s}_{j}(\bm{X}_1)$, $j\ge1$, and $\widehat{s}_{k}(\bm{X}_1)$, $k\ge1$. The risks of these estimators are bounded according to Theorems~\ref{thm-kmon} and \ref{thm-kcc}. In the second step, we consider these preliminary estimators based on sample $\bm{X}_1$ as a set of points in $\LL_\mu$. We may apply to them the selection procedure described in Section~9.1 of 
Birg\'e~\citeyearpar{MR2219712} via a T-estimator based on the second sample $\bm{X}_2$. Then Theorem~9 of that paper applies with the parameters $\Sigma=2/(e-1)$, $\lambda=1$, $q=2$, $d=h$, $\kappa=4$ and $a=p/4$. It follows that the selection procedure results in an estimator $\widehat{s}$ which satisfies
\[
C\E\!\cro{h^{2}(s,\widehat s)\,\left|\,\bm{X}_1\right.}\le\min\left\{\inf_{j\ge1}\cro{h^{2}\!\left(s,\widehat{s}_{j}(\bm{X}_1)\right)+(j/p)},\inf_{k\ge1}\cro{h^{2}\!\left(s,\widehat{s}_{k}(\bm{X}_1)\right)+(k/p)}\right\}.
\]
We may then take the expectation with respect to $\bm{X}_1$ and get
\[
C\E\!\cro{h^{2}(s,\widehat s)}\le\min\left\{\inf_{j\ge1}\cro{\E\!\cro{h^{2}\!\left(s,\widehat{s}_{j}(\bm{X}_1)\right)}+(j/p)},\inf_{k\ge1}\cro{\E\!\cro{h^{2}\!\left(s,\widehat{s}_{k}(\bm{X}_1)\right)}+(k/p)}\right\}.
\]
Now applying Theorems~\ref{thm-kmon} and \ref{thm-kcc} in order to bound $\E\!\cro{h^{2}\!\left(s,\widehat{s}_{j}(\bm{X}_1)\right)}$ and $\E\!\cro{h^{2}\!\left(s,\widehat{s}_{k}(\bm{X}_1)\right)}$ respectively
we derive that the two following bounds hold simultaneously: 
\[
C\E\!\cro{h^{2}(s,\widehat s)}\le\inf_{j\ge1,\,\overline{M}>0}\left[h^2\!\left(s,\FF_{j+2}(\overline{M})\right)+\left(\left[\left(\overline{M}n^{-2}\right)^{1/3}(\log n)^{2}\right]\bigvee\left[jn^{-1}(\log n)^{3}\right]\right)\right]
\]
and
\[
C\E\!\cro{h^{2}(s,\widehat s)}\le\inf_{k\ge1,\,\overline{M}>0}\left[h^2\!\left(s,\FF_{k+2}^1(\overline{M})\right)+\left(\left(\overline{M}n^{-4}\right)^{1/5}(\log n)^{12/5}\right)\bigvee\left(kn^{-1}(\log n)^3\right)\right].
\]
This is only a simple example and the same procedure could be applied to a larger family of models and preliminary estimators but we shall not insisit on that here, the important point being that we may easily extend the results we got for a single model to large families of models and get a final bound corresponding to the best bound among all models involved in the procedure.

An alternative selection procedure leading to the same result is described in Baraud~\citeyearpar[Section~6.2]{MR2834722}. It is also possible to avoid the splitting device by using all models 
simultaneously and a penalized $\rho$-estimator as indicated in Section~7 of Baraud~{\it et al.}~\citeyearpar{Baraud:2014qv}. Again, we would get in the end the same type of risk bounds. For simplicity, we shall not insist on this other approach here.
\section{Proofs}\label{sect-proof}
\subsection{Preliminaries}\label{prelim}
In the sequel, we shall use the following elementary properties. 
%
\begin{lem}\label{L-trans}\mbox{}

1) If $\CC$ is a VC-class of subsets of $\X$ with dimension not larger than $d$ and $A\subset\X$,
then the same holds for the class $\CC\cap A$ defined by (\ref{Eq-CA}).

2) Let $\GG$ be a class of real-valued functions on a set $\X$, $\overline g$ an extremal point of $\GG$ with degree $d(\overline g)$ and $\phi(x)=x^\alpha$ for some positive $\alpha$. Let $\FF$ be a class of non-negative functions on $\X$ such that

i) $\phi(\FF)=\{\phi(f)\,|\,f\in\FF\}\subset\GG$;

ii) there exists $\overline{f}\in\FF$ such that $\phi(\overline{f})=\overline{g}$.\\
Then $\overline f$ is extremal in $\FF$ with degree not larger than $d(\overline g)$.

3) Let $\CC$ be a class of subsets of $\X$ and $A_{1},\ldots,A_{k}$ be a partition of $\X$. If for all $j\in\{1,\ldots,k\}$, $\CC\cap A_{j}$ is a VC-class with dimension not larger than $d_{j}$ then $\CC$ is a VC-class with dimension not larger than $d=\sum_{j=1}^{k}d_{j}$.
\end{lem}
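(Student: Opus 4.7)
The lemma splits into three independent parts. Parts (1) and (3) are purely combinatorial facts about VC-classes and I would dispose of them first as warm-ups, while part (2) is the substantive claim and reduces to comparing the level sets of $f/\overline f$ and $\phi(f)/\phi(\overline f)=\phi(f)/\overline g$.

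For part (1), the key observation is that for any $B\subset A$ and any $C\in\CC$, $(C\cap A)\cap B = C\cap B$; hence a subset $B\subset A$ shattered by $\CC\cap A$ is already shattered by $\CC$, and $\dim(\CC\cap A)\le\dim(\CC)\le d$. For part (3) I would argue by contradiction with a pigeonhole step: assume $\CC$ shatters some $B\subset\X$ with $|B|=d+1=1+\sum_{j=1}^{k}d_j$ and set $B_j=B\cap A_j$. The $B_j$ partition $B$, so $\sum_{j}|B_j|=d+1$ forces some $|B_{j_0}|\ge d_{j_0}+1$. For any $B'\subset B_{j_0}$, shattering of $B$ yields $C\in\CC$ with $C\cap B=B'$, and because $B'\subset B_{j_0}\subset A_{j_0}$ one has $(C\cap A_{j_0})\cap B_{j_0}=C\cap B_{j_0}=B'$, so $\CC\cap A_{j_0}$ shatters $B_{j_0}$, contradicting the bound on its dimension.

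For part (2) I would verify, for every $u\in\R$, that $\CC_u(\FF/\overline f)$ is a VC-class with dimension at most $d(\overline g)$. With the conventions $0/0=1$ and $a/0=+\infty$ for $a>0$, the ratio $f/\overline f$ takes values in $[0,+\infty]$ since $f,\overline f\ge0$; thus for $u<0$ the level set is all of $\X$ and $\CC_u(\FF/\overline f)\subset\{\X\}$ has VC dimension zero. For $u\ge0$, the strict monotonicity of $\phi(x)=x^{\alpha}$ on $[0,+\infty)$ yields the identity
\[
\{x\in\X\,:\,f(x)/\overline f(x) > u\}\;=\;\{x\in\X\,:\,\phi(f)(x)/\overline g(x) > u^{\alpha}\},
\]
so that $\CC_u(\FF/\overline f)\subset\CC_{u^{\alpha}}(\GG/\overline g)$ because $\phi(f)\in\GG$ and $\phi(\overline f)=\overline g$. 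Extremality of $\overline g$ in $\GG$ then bounds the right-hand class by $d(\overline g)$, and the desired bound on the degree of $\overline f$ follows.

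The only mildly delicate point, which I view as the main technical obstacle, is the verification of the displayed identity at points $x$ where $\overline f(x)=0$: one checks that if $f(x)=0$ both ratios equal $1$ by the $0/0=1$ convention, while if $f(x)>0$ both equal $+\infty$ by the $a/0=+\infty$ convention, so the equality of level sets persists. Apart from this case-check, the lemma is essentially a piece of bookkeeping whose sole purpose is to feed into Proposition~\ref{prop-fonda} and Propositions~\ref{prop-extremal2}–\ref{prop-extremal3}.
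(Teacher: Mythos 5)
Your proof is correct and follows essentially the same route as the paper's terse argument: part~1 via the identity $(C\cap A)\cap B=C\cap B$ for $B\subset A$ (together with the trivial observation, which you leave implicit but the paper makes explicit, that any $B$ meeting $A^{c}$ cannot be shattered by $\CC\cap A$ since every member of $\CC\cap A$ is a subset of $A$), part~3 via the same pigeonhole contradiction, and part~2 via the level-set identity giving $\CC_u(\FF/\overline f)\subset\CC_{u^\alpha}(\phi(\FF)/\overline g)\subset\CC_{u^\alpha}(\GG/\overline g)$ for $u\ge0$. Your explicit case-check at points where $\overline f(x)=0$, using the conventions $0/0=1$ and $a/0=+\infty$, is a helpful elaboration of a detail the paper compresses into the one-line inclusion $\CC(\FF,\overline f)=\CC(\phi(\FF),\overline g)\subset\CC(\GG,\overline g)$.
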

\begin{proof}
Let $B\subset\X$ be a set with cardinality $d+1$. Either $B\subset A$ and $C\cap A\cap B=C\cap B$ 
for all $C\in\CC$ so that $B$ cannot be shattered by $\CC$ or $B\cap A^c$ is not empty and cannot be of the form $C\cap A\cap B$, which proves our first statement. The second statement follows from the fact that $\CC(\FF,\overline f)=\CC(\phi(\FF),\overline g)\subset\CC(\GG,\overline g)$.
For the third one, we argue as follows: if $\CC$ could shatter $d+1$ points, there would exist some $j\in\{1,\ldots,k\}$ and $d_{j}+1$ points of $A_{j}$ that could be shattered by $\CC$ and hence by $\CC\cap A_{j}$. This would be contradictory with the fact that $\CC\cap A_{j}$ is a VC-class with dimension not larger than $d_{j}$.
\end{proof}

\subsection{Proof of Theorem~\ref{thm-main}}
For $d\in\N^*$, let $\overline \Lambda(d)=\{\overline s\in \overline \Lambda,\ d(\overline{s})=d\}$.  Since $\overline S$ is assumed to be separable, $\overline \Lambda(d)\subset\overline S$ is also separable and we may therefore choose a countable and dense subset $\Lambda(d)$ of $\overline \Lambda(d)$ for each  $d\in\N^*$. Let us now choose a countable and dense subset $S$ for $\overline S$. Possibly changing $S$ into $\left(\bigcup_{d\ge1}\Lambda(d)\right)\bigcup S$, we may assume with no loss of generality that $\Lambda(d)\subset S$ for all $d\in\N^*$. Finally, we define our estimator as (any) $\rho$-estimator $\widehat s$ of $s$ based on $S$ following the construction described in Section~4.2 of Baraud {\it et al.}~\citeyearpar{Baraud:2014qv} as well as the notations of this paper.

For $y\ge 1$ and $\overline s\in\overline \Lambda$, we set 
\[
\B^{S}(s,\overline s,y)=\ac{t\in S\,\left|\,h^{2}(s,t)+h^{2}(s,\overline s)\le 
y^{2}/n\right.}.
\]
Note that $\B^{S}(s,\overline s,y)$ may be empty. We start our proof with the following lemma.
\begin{lem}\label{lem1}
For all $y\ge 1$ and $\overline s\in\overline \Lambda$
\[
\FF(S,\overline s,y)=\ac{\psi\left(\sqrt{{t}/\overline{s}}\right),\ {t}\in \B^{S}(s,\overline s,y)}
\]
is a weak VC-major class with dimension not larger than $d(\overline s)$.
\end{lem}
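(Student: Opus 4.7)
The plan is to transfer the weak VC-major property of the class $(\overline S/\overline s)$, which is available by extremality of $\overline s$, directly to $\FF(S,\overline s,y)$ via the strict monotonicity of $\psi\circ\sqrt{\cdot\,}$. First, I would observe that $\B^{S}(s,\overline s,y)\subset S\subset\overline S$, so that the family $\{t/\overline s\,|\,t\in\B^{S}(s,\overline s,y)\}$ is a subset of $(\overline S/\overline s)$. Since $\overline s$ is extremal in $\overline S$, Definition~\ref{def-extremal} guarantees that $(\overline S/\overline s)$ is weak VC-major with dimension $d(\overline s)$, so that $\CC_{v}((\overline S/\overline s))$ is a VC-class of subsets of $\X$ with dimension at most $d(\overline s)$ for every $v\in\R$.

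The main step is then to rewrite the upper level sets of $\psi(\sqrt{t/\overline s})$ as upper level sets of $t/\overline s$. Since $\psi:[0,+\infty]\to[-1,1]$ is strictly increasing with $\psi(0)=-1$ and $\psi(+\infty)=1$, for every $u\in\R$ one obtains
\[
\{x\in\X\,|\,\psi(\sqrt{(t/\overline s)(x)})>u\}=\begin{cases}\X,&u<-1,\\ \{t/\overline s>[\psi^{-1}(u)]^{2}\},&u\in[-1,1),\\ \varnothing,&u\ge 1,\end{cases}
\]
with the conventions from Section~\ref{sect-conv} making sense of $t/\overline s$ on $\{\overline s=0\}$. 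Consequently, for every $u\in\R$, the class $\CC_{u}(\FF(S,\overline s,y))$ either equals $\{\X\}$ or $\{\varnothing\}$, or is contained in $\CC_{[\psi^{-1}(u)]^{2}}((\overline S/\overline s))$ possibly augmented by the trivial sets $\X$ and $\varnothing$.

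To finish, I would invoke the elementary fact that adjoining $\X$ or $\varnothing$ to a VC-class cannot increase its dimension: any set $A$ shattered by the enlarged class must, in particular, be intersected to give $A$ itself and to give $\varnothing$, so if $\X$ or $\varnothing$ are the only elements producing these intersections, a strict subset of $A$ of the same cardinality minus one would already be shattered by the original class, forcing $|A|\le d(\overline s)+1$. Combining this remark with the previous step, $\CC_{u}(\FF(S,\overline s,y))$ is a VC-class with dimension at most $d(\overline s)$ for every $u\in\R$, which is exactly the claim.

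There is no serious obstacle here; the only point requiring care is the handling of the conventions $0/0=1$ and $a/0=+\infty$ used to define the ratio $t/\overline s$ globally on $\X$, which is needed to make the level-set identity above hold pointwise. Note also that the proof does not use the Hellinger constraint defining $\B^{S}(s,\overline s,y)$: only the inclusion $\B^{S}(s,\overline s,y)\subset\overline S$ matters, so the bound $d(\overline s)$ is actually uniform in $y\ge 1$.
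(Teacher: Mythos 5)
Your approach is essentially the paper's: both proofs transfer the weak VC-major property from $(\overline S/\overline s)$, guaranteed by extremality of $\overline s$, to $\FF(S,\overline s,y)$ via the monotonicity of $x\mapsto\psi(\sqrt{x})$ and the observation that only the inclusion $\B^{S}(s,\overline s,y)\subset S$ matters. The single structural difference is that the paper delegates the monotone-transfer step to Proposition~3 of Baraud~(2016), whereas you re-derive it explicitly through the level-set identity $\{\psi(\sqrt{t/\overline s})>u\}=\{t/\overline s>[\psi^{-1}(u)]^{2}\}$ for $u\in[-1,1)$ (with the trivial cases $\{\X\}$, $\{\varnothing\}$ for $u<-1$ and $u\ge1$). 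That makes your version more self-contained at the cost of a few extra lines.

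One caution: the ``elementary fact'' in your closing paragraph --- that adjoining $\X$ or $\varnothing$ to a VC-class cannot increase its dimension --- is false in general. On $\X=\{1\}$ the class $\CC=\{\{1\}\}$ has VC-dimension $0$, while $\CC\cup\{\varnothing\}$ shatters $\{1\}$ and has VC-dimension $1$. Moreover, the argument you sketch only yields $|A|\le d(\overline s)+1$ for a shattered set $A$, which would give the wrong dimension bound even where it works. Fortunately none of this is needed: for $u\in[-1,1)$ the level-set identity already gives the clean inclusion $\CC_{u}(\FF(S,\overline s,y))\subset\CC_{[\psi^{-1}(u)]^{2}}((\overline S/\overline s))$ without any augmentation, since each upper level set $\{t/\overline s>[\psi^{-1}(u)]^{2}\}$ with $t\in\B^{S}(s,\overline s,y)\subset\overline S$ is literally an element of the right-hand class. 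Drop the augmentation remark and your proof closes exactly as intended.
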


\begin{proof}
Since $(S/\overline s)$ is weak VC-major with dimension not larger than $d(\overline s)$ and the map $x\mapsto \psi(\sqrt{x})$ is increasing from $[0,+\infty]$ to $[-1,1]$, it follows from Baraud~\citeyearpar[Proposition~3]{Baraud:2014kq} that
\[
\FF'(S,\overline s,y)=\ac{\psi\left(\sqrt{{t}/\overline{s}}\right),\ t\in S}
\]
is weak VC-major with dimension not larger than $d(\overline s)$ and so is $\FF(S,\overline s,y)\subset \FF'(S,\overline s,y)$.
\end{proof}
Let us now go on with the proof of Theorem~\ref{thm-main}. We fix $y\ge 1$, $\overline s\in\overline{\Lambda}$ and $d=d(\overline{s})$. It follows from Baraud~\citeyearpar[Proposition 3 on page 386 with $\psi/\sqrt{2}$ in place of $\psi$]{MR2834722} and the definition of $ \B^{S}(s,\overline s,y)$ that, for all $t\in \B^{S}(s,\overline s,y)$, 
\[
\E\!\cro{\psi^{2}\pa{\sqrt{{t\over \overline s}(X_{1})}}}\le \cro{6\left(h^{2}(s,t)+h^{2}(s,\overline s)\right)}\wedge 1\le \pa{6y^{2}\over n}\wedge 1.
\]
Since $S$ is countable and $\psi$ bounded by 1, the family $\FF(S,\overline s,y)$ is also countable  and its elements are bounded by 1. Besides, Lemma~\ref{lem1} ensures that $\FF(S,\overline s,y)$ is a weak VC-major class with dimension not larger than $d\ge 1$. We may therefore apply Corollary~1 of Baraud~\citeyearpar{Baraud:2014kq} to the family $\FF(S,\overline s,y)$ with $b=1$, $\sigma^{2}=(6y^{2}/n)\wedge 1$ and get
\begin{eqnarray*}
\w^{S}(\gs,\overline \gs,y)&=&\E\!\cro{\sup_{f\in \FF(S,\overline s,y)}\ab{\sum_{i=1}^{n}\pa{f(X_{i})-\E\!\cro{f(X_{i})}\strut}}}\\
&\le& \left[4\sqrt{2n\overline \Gamma(d)}\times \sigma\log\pa{e\over\sigma}\right]+16\overline \Gamma(d)\\
&\le& \left[8\sqrt{3\overline \Gamma(d)}\times y\log\pa{e\pa{{n\over 6y^{2}}\vee 1}}\right]+16\overline \Gamma(d),
\end{eqnarray*}
with
\[
\overline \Gamma(d)=\log\pa{2\sum_{j=0}^{d\wedge n}\binom{n}{j}}\le\widetilde{\Gamma}(d)=
\log 2+(d\wedge n)\log\pa{en\over d\wedge n}.
\]
In particular, if $y^{2}\ge\widetilde{\Gamma}(d)/6\ge (d\wedge n)/6$ then $\widetilde{\Gamma}(d)\le y\sqrt{6\widetilde{\Gamma}(d)}$, hence
\begin{equation}
\w^{S}(\gs,\overline \gs,y)\le 8y\sqrt{3\widetilde{\Gamma}(d)}\cro{\log\pa{{en\over n\wedge d}}+2\sqrt{2}}
\quad\mbox{for }y\ge\sqrt{\widetilde{\Gamma}(d)/6}.
\label{Eq-ws}
\end{equation}
We recall that the quantity $D^{S}(\gs,\overline \gs)$ is defined in Section~4.3 of Baraud~{\it et al.}~\citeyearpar{Baraud:2014qv} by 
\[
D^{S}(\gs,\overline \gs)=y_{0}^{2}\vee1\quad\mbox{with}\quad y_{0}=\sup\ac{y\ge0\,\left|\,\w^{S}(\gs,\overline \gs,y)>c_0y^{2}\right.}\quad\mbox{and}\quad c_0=\frac{\sqrt{2}-1}{2\sqrt{2}}.
\]
It follows from (\ref{Eq-ws}) that $c_{0}y^{2}<\w^{S}(\gs,\overline \gs,y)$ implies that either $y<\sqrt{\widetilde{\Gamma}(d)/6}$ or 
\[
y\le (c_{0}y)^{-1}\w^{S}(\gs,\overline \gs,y)\le 8c_{0}^{-1}\sqrt{3\widetilde{\Gamma}(d)}\cro{\log\pa{{en\over n\wedge d}}+2\sqrt{2}}=B.
\]
Since in both cases, $y^{2}\le \max\left\{{\widetilde{\Gamma}(d)/6};B^{2}\right\}=B^{2}$ and $d=d(\overline{s})$, we deduce that
\begin{equation}\label{eq-bDS}
D^{S}(\gs,\overline \gs)\le B^2\le\overline{ \kappa}[d(\overline{s})\wedge n]\log^{3}\left({en\over d(\overline{s})\wedge n}\right)\le \overline{\kappa}d(\overline{s})\log_+^{3}\left(\frac{n}{d(\overline{s})}\right)\end{equation}
for all $s\in\LL_{\mu}$, $\overline{s}\in S$ and some positive numerical constant $\overline{\kappa}$. We now use Theorem~1 in Baraud~{\it et al.}~\citeyearpar{Baraud:2014qv} for which we recall that the notation $\gh^{2}(\gt,\gt')$ defined for densities $t,t'\in\LL_\mu$ means $nh^{2}(t,t')$. Since $\bigcup_{d\ge 1}\Lambda(d)\subset S$, we obtain that for all $\xi>0$, with probability at least $1-e^{-\xi}$,
\begin{equation}
C'h^{2}(s,\widehat s)\le \inf_{\overline s\in S}\cro{h^{2}(s,\overline s)+{D^{S}(\gs,\overline \gs)\over n}}+{\xi \over n}\le\inf_{d\ge1}\cro{\inf_{\overline s\in\Lambda (d)}h^{2}(s,\overline s)+\overline{\kappa} {d\over n}\log_+^{3}\pa{n\over d}}+{\xi \over n}.
\label{et-3}
\end{equation}
Finally, $\Lambda(d)$ being dense in $\overline \Lambda(d)$, 
\[
\inf_{\overline s\in\Lambda (d)}h^{2}(s,\overline s)=\inf_{\overline s\in\overline \Lambda (d)}h^{2}(s,\overline s)\quad\mbox{for\ all }d\in\N^*
\]
and the bracketed term on the right-hand side of~\eref{et-3} becomes
\[
\inf_{d\ge1}\cro{\inf_{\overline s\in\overline \Lambda (d)}h^{2}(s,\overline s)+\overline{\kappa}{d\over n}\log_+^{3}\pa{n\over d}}=
\inf_{\overline s\in \overline \Lambda}\cro{h^{2}(s,\overline s)+\overline{\kappa} {d(\overline s)\over n}\log_+^{3}\pa{n\over d(\overline s)}}.
\]
Our conclusion follows.

\subsection{Proof of Theorem~\ref{thm-mono}}
Let $D\ge 1$, $\lambda\ge 0$ and $\overline s\in \overline V(D)$ be based on the partition $\II\in\JJ(D+2)$. For all $t=f\1_{(\overline x,+\infty)}$ and $I\in\II$, the positive part $(t-\lambda\overline s)_{+}$ of  $t-\lambda\overline s$ is 0 on $(-\infty,\overline x]\cap I$ and is non-increasing on $I\cap (\overline x,+\infty)$. Consequently, $\{t-\lambda\overline s>0\}\cap I=\{(t-\lambda\overline s)_{+}>0\}\cap I\cap (\overline x,+\infty)$ is a sub-interval of $I$ (possibly empty) and $\CC(\overline S,\overline s,\lambda)\cap I$ is therefore VC with dimension not larger than 2. By Lemma~\ref{L-trans}, $\CC(\overline S,\overline s,\lambda)$ is VC with dimension not larger than $2(D+2)$ and by Proposition~\ref{prop-fonda} the element $\overline s$ is extremal in $\overline S$ with dimension not larger than $4(D+2)$. Finally Theorem~\ref{thm-mono} follows from Theorem~\ref{thm-main}. 

\subsection{Proof of Proposition~\ref{prop-approx}}
It relies on a series of approximation lemmas that shall also prove useful in the sequel.
\begin{lem}\label{lem-meanapprox}
Let $f$ be a monotone function with finite variation $V_I(f)$ on some interval $I$ of finite length $l$. Then 
\[
\int_I\left[f(x)-\overline{f}\right]^2dx\le \frac{l\left[V_I(f)\right]^2}{4}\qquad\mbox{with}\qquad
\overline{f}=\frac{1}{l}\int_If(x)\,dx
\]
and the factor $1/4$ is optimal.
\end{lem}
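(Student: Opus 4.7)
The approach is to reduce to a bounded non-decreasing function taking values in $[0,V]$ and then invoke the elementary inequality $x(V-x)\le V^2/4$. Both sides of the claimed inequality are invariant under the substitutions $f\mapsto f+c$ and $f\mapsto -f$, so I may assume that $f$ is non-decreasing on $I$ with $\inf_{\mathring{I}}f=0$. Since $V=V_I(f)$ is assumed finite, this normalisation gives $0\le f(x)\le V$ for almost every $x\in I$.

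The pointwise bound $f^{2}\le Vf$ then integrates to
\[
\int_I f^{2}\,dx\;\le\;V\int_I f\,dx\;=\;Vl\,\overline f.
\]
Combined with the identity $\int_I(f-\overline f)^{2}\,dx=\int_I f^{2}\,dx-l\overline f^{\,2}$, this yields
\[
\int_I(f-\overline f)^{2}\,dx\;\le\;l\,\overline f(V-\overline f)\;\le\;\frac{l V^{2}}{4},
\]
where the last step is just the elementary bound $x(V-x)\le V^{2}/4$ for $x\in[0,V]$, saturated at $x=V/2$.

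For the optimality of the constant $1/4$, I would exhibit the step function $f=V\1_{(m,b)}$ on $I=(a,b)$ with $m=(a+b)/2$: it is non-decreasing, has variation exactly $V$, mean $\overline f=V/2$, and a direct computation gives $\int_I(f-V/2)^{2}\,dx=lV^{2}/4$, which saturates the inequality and shows that $1/4$ cannot be replaced by a smaller constant.

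There is essentially no obstacle here; the only minor point to check is that finiteness of $V$ forces $f$ to be bounded on $\mathring I$, so that the normalisation $\inf f=0$ is legitimate and the pointwise inequality $f\le V$ holds almost everywhere on $I$.
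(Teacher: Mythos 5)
Your proof is correct, and it takes a genuinely different route from the paper's. The paper normalizes so that $\overline f=0$ on $(0,l)$, splits the integral of $f^2$ at the unique sign change $\lambda l$ of $f$, bounds $f^2$ by $af$ on the positive piece and by $-bf$ on the negative piece (with $a=f(0+)$, $b=-f(l-)$, $a+b=V$), and finishes with an explicit maximization of $\min\{(R-b)\lambda,\,b(1-\lambda)\}$ over $\lambda$ and $b$. You instead normalize so that $\inf_{\mathring I}f=0$, which gives the single pointwise bound $f^2\le Vf$ on all of $I$; combined with the orthogonality identity $\int_I(f-\overline f)^2=\int_I f^2-l\,\overline f^{\,2}$, this reduces the problem to the elementary inequality $\overline f(V-\overline f)\le V^2/4$. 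Your version avoids locating the zero crossing and the two-parameter maximization, so it is somewhat shorter; both arguments are equally elementary and apply to the same class of monotone functions. Your optimality example $f=V\1_{(m,b)}$ is, up to the vertical shift, the same step function the paper uses, so the optimality discussion coincides. The one hypothesis you flag at the end --- that finite variation forces $f$ to be bounded on $\mathring I$ --- is immediate from the paper's Definition~\ref{def-var} of $V_I$, so the normalisation is indeed legitimate.
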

\begin{proof}
Assuming, without loss of generality, that $f$ is non-increasing, let us observe that one can replace 
$f$ by $g$ with $g(x)=f(x-c)-\overline{f}$ where $c$ is the left-hand point of $I$. This
amounts to assume that $c=0=\overline{f}$. Let $f(0+)=a$, $f(l-)=-b$, $a+b=V_I(f)=R$ and $\lambda=l^{-1}\sup\{x\,|\,f(x)>0\}\in(0,1)$. Then
\[
\int_0^{\lambda l} f(x)\,dx=-\int_{\lambda l}^l f(x)\,dx=Al\le l\min\{a\lambda,b(1-\lambda)\}
=l\min\{(R-b)\lambda,b(1-\lambda)\}.
\]
A maximization with respect to $b$ and $\lambda$ shows that $A\le R/4$ and it follows that
\[
\int_0^lf^2(x)\,dx=\int_0^{\lambda l}f^2(x)\,dx+\int_{\lambda l}^lf^2(x)\,dx\le(a+b)Al=RAl\le \frac{lR^2}{4}.
\]
The optimality follows by considering the case of $f=(R/2)\left(\1_{(0,l/2]}-\1_{(l/2,l)}\right)$.
\end{proof}
Our next lemma involves the norm in $\IL_{2}(\R,\B(\R),dx)$ hereafter denoted by $\norm{\cdot}$. 
\begin{lem}\label{lem-debase}
Let $f$ be a non-increasing function on $(a,b)$ with finite variation $V_{(a,b)}(f)<R$. For all $D\ge 1$, there exists a partition $\II$ of $(a,b)$ into at most $D$ intervals and a function $f_{\II}$ which is piecewise constant on each element of the partition $\II$ and non-increasing such that $f(b-)\le f_{\II}\le f(a+)$, $\norm{f_{\II}\1_{(a,b)}}\le\norm{f\1_{(a,b)}}$ and 
\begin{equation}\label{eq-approx1}
\int_a^bf_{\II}(x)\,dx=\int_a^bf(x)\,dx,\qquad\norm{(f-f_{\II})\1_{(a,b)}}\le\frac{R\sqrt{b-a}}{2D}.
\end{equation}
Besides, there exists a partition $\II'$ of $(a,b)$ into at most $2D$ intervals of length not larger than $(b-a)/D$ such that for all $I\in\II'$, $V_{I}(f)\le RD^{-1}$. The same results hold for non-decreasing functions on $(a,b)$.
\end{lem}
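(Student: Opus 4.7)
The plan is to build $\II$ by horizontally slicing the range of $f$ into $D$ equal pieces of size $R/D$, and to define $f_\II$ as the piecewise average of $f$ on the resulting partition of $(a,b)$.

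More concretely, assuming $f$ non-increasing (the other case being symmetric, for instance by applying the argument to $-f$ combined with a reflection) with $A = f(a+)$ and $B = f(b-)$ both finite since $V_{(a,b)}(f) < R$, I would set $x_0 = a$, $x_D = b$ and
\[
x_i = \sup\{x \in (a,b) \,:\, f(x) > A - iR/D\} \quad \text{for } 1 \leq i \leq D-1.
\]
The $(x_i)$ form a non-decreasing sequence, hence $I_i = (x_{i-1}, x_i)$, $1 \le i \le D$ (some possibly empty, which we discard), is a partition of $(a,b)$ into at most $D$ intervals. By monotonicity of $f$ together with the defining property of $x_{i-1}$ and $x_i$, on each non-empty $I_i$ the values of $f$ all lie in the range $(A - iR/D, A - (i-1)R/D]$, which gives $V_{I_i}(f) \leq R/D$.

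I would then define $f_\II$ to equal $\overline{f}_{I_i} = |I_i|^{-1}\int_{I_i} f$ on each $I_i$. By construction $f_\II$ is piecewise constant and $\int_a^b f_\II = \int_a^b f$; it is non-increasing since $\overline{f}_{I_i} \geq \sup_{I_{i+1}} f \geq \overline{f}_{I_{i+1}}$; its values lie in $[B, A] = [f(b-), f(a+)]$; and Cauchy--Schwarz applied on each $I_i$ gives $|I_i| \overline{f}_{I_i}^{\,2} \leq \int_{I_i} f^2$, which sums to $\|f_\II \1_{(a,b)}\| \leq \|f \1_{(a,b)}\|$. The $\IL_2$ error bound in \eref{eq-approx1} then follows from Lemma~\ref{lem-meanapprox} applied on each $I_i$ combined with $V_{I_i}(f) \leq R/D$:
\[
\|(f-f_\II)\1_{(a,b)}\|^2 \leq \sum_i \frac{|I_i|(R/D)^2}{4} = \frac{(b-a)R^2}{4D^2}.
\]

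For the second partition, I would simply take $\II_2$ to be the uniform partition of $(a,b)$ into $D$ equal subintervals of length $(b-a)/D$ and set $\II' = \II \vee \II_2$. Both $\II$ and $\II_2$ have at most $D-1$ interior endpoints, so $\II'$ has at most $2(D-1)$ endpoints and therefore at most $2D-1 \leq 2D$ intervals; each $I \in \II'$ inherits length $\leq (b-a)/D$ from $\II_2$ and variation $\leq R/D$ from $\II$.

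I do not anticipate any real obstacle. The only mildly delicate point is checking that the sup-based slicing construction actually bounds $V_{I_i}(f)$ by $R/D$ on each non-empty piece, which is a routine consequence of the monotonicity of $f$ and the defining property of the supremum; everything else is linear book-keeping and Cauchy--Schwarz.
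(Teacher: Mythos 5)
Your proof is correct and follows essentially the same route as the paper: both partition $(a,b)$ so that $f$ varies by at most $R/D$ on each piece (yours by explicit level-set cuts at thresholds $A - iR/D$, the paper's by a greedy recursive rule $x_j=\sup\{x\ge x_{j-1}:\ f(x_{j-1})-f(x)\le R/D\}$), then take $f_\II$ to be the piecewise average and conclude via Lemma~\ref{lem-meanapprox} and Jensen/Cauchy--Schwarz. Your construction of the second partition as the common refinement of $\II$ with a uniform grid is a slightly cleaner bookkeeping than the paper's interval-by-interval subdivision, but it yields the same bound of at most $2D$ intervals.
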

\begin{proof}
Clearly, the results remain valid if we replace $f$ by $g$ with $g=f$ almost everywhere (with respect to the Lebesgue measure), $g(a+)=f(a+)$ and $g(b-)=f(b-)$. Since $f$ is non-increasing on $(a,b)$, for all $x\in(a,b)$ $f(x+)$ exists and $f$ admits an at most countable number of discontinuities. We may therefore assume that $f$ is actually defined on $[a,b]$, right-continuous on $[a,b)$ and left-continuous at $b$.  

Starting from $x_{0}=a$, define recursively for all $j\ge 1$, 
\[
x_{j}=\sup\ac{x\in [x_{j-1},b],\ f(x_{j-1})-f(x)\le RD^{-1}}.
\] 
If $k\ge 1$ and $x_{k}<b$, $f(x_{k-1})-f(x)>RD^{-1}$ for all $x>x_{k}$ hence  $f(x_{k})-f(x_{k-1})\ge RD^{-1}$ since $f$ is right-continuous. In particular for such a $k$, we necessarily have 
\[
R>f(a)-f(x_{k})=\sum_{j=1}^{k}f(x_{j-1})-f(x_{j})\ge kRD^{-1},
\]
which implies that $k<D$. The process therefore results in a finite number of distinct points $x_{0}=a<x_{1}<\ldots<x_{K+1}=b$ with $K+1\le D$. It also follows from the definition of the $x_j$ that $f(x_{j-1})-f(x_{j}-)\le RD^{-1}$ for $1\le j\le K+1$.
Let us now set 
\[
\overline{f}_j=(x_j-x_{j-1})^{-1}\int_{x_{j-1}}^{x_j}f(x)\,dx\qquad\mbox{and}\qquad
f_{\II}=\sum_{j=1}^{K+1}\overline{f}_j\1_{(x_{j-1},x_{j}]}.
\]
Note that $f(b-)\le f_{\II}\le f(a+)$, $\int_a^bf_{\II}(x)\,dx=\int_a^bf(x)\,dx$ and that $f_{\II}$ is non-increasing and piecewise constant on a partition of $(a,b)$ into $K+1$ intervals. Since, for all $j$, $0\le f(x_{j-1})-f(x_j-)\le RD^{-1}$, it follows from Lemma~\ref{lem-meanapprox} that
\[
\norm{(f-f_{\II})\1_{(a,b)}}^{2}=\sum_{j=1}^{K+1}\int_{x_{j-1}}^{x_{j}}\pa{f-\overline{f}_j}^{2}dx\le
\left(\frac{R}{2D}\right)^2\,\sum_{j=1}^{K+1}\left(x_j-x_{j-1}\right)=\left(\frac{R}{2D}\right)^2(b-a).
\]
Moreover Jensen's Inequality implies that
\[
\int_{x_{j-1}}^{x_{j}}f_{\II}^2(x)\,dx=\left(x_j-x_{j-1}\right)\left(\frac{1}{x_j-x_{j-1}}\int_{x_{j-1}}^{x_{j}}f(x)\,dx\right)^2\le\int_{x_{j-1}}^{x_{j}}f^2(x)\,dx,
\]
which shows that $\norm{f_{\II}\1_{(a,b)}}\le\norm{f\1_{(a,b)}}$ and proves the first part of the lemma. 

For the second part, define $\II'$ as follows: for each element $I\in\II$ with length $\ell(I)$ larger than $(b-a)/D$ divide $I$ into $\lceil D \ell(I)/(b-a)\rceil$ intervals of length not larger than $(b-a)/D$. The process results in a new partition $\II'$ thinner than $\II$ and its cardinality is not larger than
\[
\sum_{I\in\II}\left\lceil \frac{D \ell(I)}{b-a}\right\rceil\le \sum_{I\in\II} \left[\frac{D \ell(I)}{b-a}+1\right]\le\left[\frac{D}{b-a} \sum_{I\in\II}\ell(I)\right]+|\II|\le 2D.
\]
Since by construction $V_{I}(f)\le RD^{-1}$ for all $I\in\II$, this property is also true for the elements $I$ of the partition $\II'$ which is thinner than $\II$. For non-decreasing functions, change $f$ to $-f$.
\end{proof}
%
\begin{lem}\label{L-Hell}
Given two probability densities $t,u$ with respect to $\mu$,
\begin{equation}
h(t,u)\le\left\|\sqrt{t}-\lambda\sqrt{u}\right\|\quad\mbox{for all }\lambda\in\R.
\label{Eq-proj}
\end{equation}
In particular, if $f$ is a non-negative element in $\IL_2(\mu)$ such that $\|f\|>0$ and $u=(f/\|f\|)^2$,  $h(t,u)\le\left\|\sqrt{t}-f\right\|$ for any probability density $t$ with respect to $\mu$.
\end{lem}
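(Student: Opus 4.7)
The plan is to reduce both inequalities to an elementary one-variable quadratic in $\lambda$, exploiting the fact that $\sqrt{t}$ and $\sqrt{u}$ are unit vectors in $\IL_2(\mu)$ since $t$ and $u$ are probability densities with respect to $\mu$.

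First I would introduce the affinity coefficient $\rho=\int\sqrt{tu}\,d\mu$ and rewrite the two sides of the inequality in terms of $\rho$. Expanding the square in the definition of the Hellinger distance and using $\|\sqrt{t}\|^{2}=\|\sqrt{u}\|^{2}=1$, one gets $h^{2}(t,u)=1-\rho$, while $\|\sqrt{t}-\lambda\sqrt{u}\|^{2}=1+\lambda^{2}-2\lambda\rho$. So the desired inequality~\eref{Eq-proj} is equivalent to
\[
\lambda^{2}-2\lambda\rho+\rho\ge 0\qquad\text{for all }\lambda\in\R.
\]

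The key observation is that $\rho\in[0,1]$: non-negativity comes from $t,u\ge 0$, and the upper bound is Cauchy--Schwarz applied to $\sqrt{t}$ and $\sqrt{u}$. One can then rewrite the quadratic as $(\lambda-\rho)^{2}+\rho(1-\rho)$, which is manifestly non-negative. (Equivalently, the discriminant $4\rho(\rho-1)$ is non-positive, or one may notice that the minimum over $\lambda$ of the right-hand side is attained at $\lambda=\rho$, the $\IL_{2}$-projection of $\sqrt{t}$ onto the line spanned by $\sqrt{u}$, giving $1-\rho^{2}\ge 1-\rho$.)

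For the second assertion, I would simply apply the first part with $\lambda=\|f\|$. Since $\sqrt{u}=f/\|f\|$ by construction, one has $\lambda\sqrt{u}=f$, and the bound specializes to $h(t,u)\le\|\sqrt{t}-f\|$.

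I do not anticipate any real obstacle: once the two quantities are written in terms of $\rho$ and once one remembers that $0\le\rho\le 1$, everything reduces to a one-line check. The only subtle point worth flagging in the write-up is the normalization $h^{2}(t,u)=1-\rho$ (with the $1/2$ factor in $h$) versus $\|\cdot\|^{2}$ without such a factor — it is precisely this mismatch that makes room for the improvement $1-\rho\le 1-\rho^{2}$, hence for the inequality to hold even at the optimal $\lambda$.
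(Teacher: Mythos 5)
Your proof is correct and is essentially the paper's argument in slightly different notation: the paper writes the affinity $\int\sqrt{tu}\,d\mu$ as $\cos\alpha$ with $\alpha\in[0,\pi/2]$ and argues via the orthogonal projection, yielding $\inf_\lambda\|\sqrt{t}-\lambda\sqrt{u}\|^2=\sin^2\alpha=1-\cos^2\alpha\ge1-\cos\alpha=h^2(t,u)$, while you keep the affinity as $\rho\in[0,1]$ and complete the square to get $(\lambda-\rho)^2+\rho(1-\rho)\ge0$; both reduce to the same observation that $1-\rho\le1-\rho^2$ for $\rho\in[0,1]$. The second assertion is handled identically by taking $\lambda=\|f\|$.
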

\begin{proof}
We notice that $\sqrt{t}$ and $\sqrt{u}$ are two vectors of norm one in $\IL_2(\mu)$ and their scalar product is $\int_{\X}\sqrt{ut}\,d\mu=\cos\alpha$ for some $\alpha\in [0,\pi/2]$. It implies that $v=\cos \alpha \sqrt{u}$ is the orthogonal projection of $\sqrt{t}$ on the linear space generated by $\sqrt{u}$, hence
\[
\inf_{\lambda\in\R}\norm{\sqrt{t}-\lambda\sqrt{u}}=\norm{\sqrt{t}-\cos \alpha \sqrt{u}}=\sin \alpha.
\]
Inequality~\eref{Eq-proj} follows from the fact that $h^{2}(t,u)=1-\cos\alpha\le \sin^{2}\alpha$ for all $\alpha\in [0,\pi/2]$. The last result is obtained from (\ref{Eq-proj}) with $\lambda=\|f\|$.
\end{proof}
To complete the proof of Proposition~\ref{prop-approx}, we apply Lemma~\ref{lem-debase} with $f=\sqrt{t}$ and $R>\sqrt{t(a+)}-\sqrt{t(b-)}$. The resulting function $f_{\II}$ is then nonnegative, non-increasing on $(a,b)$ and satisfies $0<\norm{f_{\II}}$. Setting $\overline s_{\II}=f_{\II}^{2}/\norm{f_{\II}}^{2}$, which is an element of $\overline V(D)$,  we may apply the last part of Lemma~\ref{L-Hell} with $f=f_{\II}$ which gives $h(t,\overline s_{\II})\le\norm{f-f_{\II}}\le R\sqrt{b-a}/(2D)$. The conclusion follows by letting $R$ converge to $V_{[a,b]}\left(\sqrt{t}\right)$.
\subsection{Proof of Proposition~\ref{P-approkM}}
Let $t$ be based on $\II=\{I_0,\ldots,I_{k+1}\}$, $R_j>V_{I_j}\left(\sqrt{t}\right)$ for $1\le j\le k$ and let $D_1,\ldots,D_k$ be positive integers. On the intervals $I_0$ and $I_{k+1}$, $t$ is equal to 0 and, for all other intervals of $\II$, one can apply Lemma~\ref{lem-debase}
to find an approximation $f_j$ of $\sqrt{t_j}$ which is monotone, piecewise constant with $D_j$ pieces on $I_j$ and satisfies, according to (\ref{eq-approx1}), $\left\|\left(f_j-\sqrt{t}\right)\1_{I_j}\right\|
\le R_j\sqrt{\ell(I_j)}/(2D_j)$. Therefore, if $f=\sum_{j=1}^kf_j\1_{I_{j}}$ and $u=(f/\|f\|)^2$, we derive from
Lemma~\ref{L-Hell} that
\[
h^2(t,u)\le\left\|f-\sqrt{t}\right\|^2=\sum_{j=1}^k\left\|\left(f_j-\sqrt{t}\right)\1_{I_j}\right\|^2\le
\sum_{j=1}^k\frac{\ell(I_j)R_j^2}{4D_j^2}=M.
\]
Moreover, we can always assume (modifying it on a negligeable set if necessary) that
$u$ belongs to $\overline{V}(D')\cap\FF_{k+2}$ with $D'=\sum_{j=1}^kD_j$. Given $D$, a formal
minimization with respect to the $x_j>0$ of $\sum_{j=1}^k\ell(I_j)R_j^2x_j^{-2}$
under the condition that $\sum_{j=1}^kx_j\le D$ leads to
\[
x_j=\lambda\left(\ell(I_j)R_j^2\right)^{1/3}\quad\mbox{with}\quad
\sum_{j=1}^{k}x_j=D,
\]
so that $\lambda^{-1}=D^{-1}\sum_{j=1}^k\left(\ell(I_j)R_j^2\right)^{1/3}$.
Taking into account the fact that the $D_j$ should belong to $\N^*$, we finally set
\[
D_j=\left\lceil D\left[\sum_{j=1}^k\left(\ell(I_j)R_j^2\right)^{1/3}\right]^{-1}\left(\ell(I_j)R_j^2\right)^{1/3}\right\rceil,
\]
which implies that $\sum_{j=1}^{k}D_j\le D+k$ and
\[
M\le\frac{1}{4D^2}\left[\sum_{j=1}^k\left(\ell(I_j)R_j^2\right)^{1/3}\right]^{2}
\left[\sum_{j=1}^k\left(\ell(I_j)R_j^2\right)^{1/3}\right]=\frac{1}{4D^2}
\left[\sum_{j=1}^k\left(\ell(I_j)R_j^2\right)^{1/3}\right]^3.
\]
The corresponding function $u$ belongs to $\overline{V}(D+k)\cap\FF_{k+2}$ so that
\[
h^2\!\left(t,\overline{V}(D+k)\cap \FF_{k+2}\right)\le\frac{1}{4D^2}\left[\sum_{j=1}^k\left(\ell(I_j)R_j^2\right)^{1/3}\right]^3.
\]
The conclusion follows by letting each $R_j$ converge to $V_{I_j}\left(\sqrt{t}\right)$.
\subsection{Proof of Proposition~\ref{P-approcc}}
It relies on the following approximation lemma.
%
\begin{lem}\label{L-conc}
Let $f$ be a continuous and either convex or concave function on $[a,b]$ with right-hand derivative $f'$ on $(a,b)$ satisfying $V_{(a,b)}(f')<+\infty$. The affine function $g$ on $[a,b]$ defined by $g(a)=f(a)$ and $g(b)=f(b)$ satisfies
\[
\sup_{a\le x\le b}|f(x)-g(x)|\le\frac{b-a}{4}V_{(a,b)}(f').
\]
The factor $1/4$ is optimal.
\end{lem}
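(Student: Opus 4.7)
My plan is to reduce to the convex case, then analyze the error function $\phi = g - f$ using the monotonicity of $f'$.

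First I would reduce to $f$ convex: if $f$ is concave, replace it by $-f$ and note that the chord and the variation of the derivative transform accordingly, so the inequality is equivalent. With $f$ convex, $f'$ is non-decreasing on $(a,b)$, so $V_{(a,b)}(f') = f'(b-) - f'(a+)$, which I denote by $R$. Set $\phi(x) = g(x) - f(x)$ on $[a,b]$. Then $\phi$ is concave (affine minus convex), continuous, and vanishes at the endpoints, which immediately gives $\phi\ge 0$ on $[a,b]$, so $|f - g| = \phi$.

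Next I would locate the maximum. Let $m = (f(b)-f(a))/(b-a)$ be the slope of the chord. Writing $f(b)-f(a) = \int_a^b f'(x)\,dx$ and using that $f'$ is non-decreasing, one gets $f'(a+)\le m \le f'(b-)$, so setting $\alpha = m - f'(a+)\ge 0$ and $\beta = f'(b-) - m\ge 0$ one has $\alpha + \beta = R$. The right-hand derivative $\phi'(x) = m - f'(x)$ is non-increasing, equals $\alpha$ at $a+$ and $-\beta$ at $b-$. Thus, if $x^*\in [a,b]$ achieves the maximum of $\phi$, then on $[a,x^*]$ one has $0\le \phi'(x)\le \alpha$ and on $[x^*,b]$ one has $-\beta\le \phi'(x)\le 0$. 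Integrating from $a$ and from $b$ gives the two bounds
\[
\phi(x^*) \le \alpha(x^* - a),\qquad \phi(x^*)\le \beta(b - x^*).
\]

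Finally I would optimize. Writing $\lambda = (x^* - a)/(b-a)\in[0,1]$, the bound becomes $\phi(x^*) \le (b-a)\min\{\alpha\lambda,\beta(1-\lambda)\}$. For fixed $R = \alpha + \beta$, maximizing the right-hand side over $\alpha,\beta\ge 0$ with $\alpha+\beta=R$ and $\lambda\in[0,1]$ reduces (by equalizing the two terms) to maximizing $R\lambda(1-\lambda)$, whose supremum $R/4$ is attained at $\lambda = 1/2$. This delivers the claimed inequality. The only non-routine step is the subtlety that $f'$ is a right-hand derivative (possibly discontinuous), but this is handled cleanly by using the monotonicity of $f'$ together with $\int_a^b f'(x)\,dx = f(b)-f(a)$, which still holds for the right-derivative of a convex function. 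For optimality, the piecewise affine function $f$ with $f(a) = f(b) = 0$ and $f'\equiv -R/2$ on $(a,(a+b)/2)$, $f'\equiv R/2$ on $((a+b)/2,b)$ is convex, has $V_{(a,b)}(f') = R$ and chord $g\equiv 0$, yet $|f((a+b)/2)| = R(b-a)/4$, showing that $1/4$ cannot be improved.
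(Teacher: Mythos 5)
Your proof is correct and follows essentially the same route as the paper's: reduce to a single concavity type, observe that the error function vanishes at the endpoints and has monotone (right-hand) derivative so it attains its maximum at an interior point $x^*$, bound that maximum by $\alpha(x^*-a)$ and $\beta(b-x^*)$ via the extreme values of the derivative, and conclude with the observation that $\min\{\alpha\lambda,\beta(1-\lambda)\}\le R\lambda(1-\lambda)\le R/4$; the paper phrases this last step as a convex-combination inequality rather than an explicit equalization of the two terms, but the content is identical. Your optimality example (the tent function) is the same as the paper's up to an affine change of variables.
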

\begin{proof}
Changing $f$ into $-f$, we may assume that $f$ is concave on $[a,b]$. In particular, $h(x)=f(x)-g(x)\ge0$ for $x\in [a,b]$ and since $h$ is continuous on $[a,b]$ and satisfies $h(a)=h(b)=0$, there exists some $c\in (a,b)$ such that 
\[
\sup_{a\le x\le b}h(x)=h(c)=\int_{a}^{c}\left(f'(u)-\ell\right)du=\int_{c}^{b}\left(\ell-f'(u)\right)du
\quad\mbox{with}\quad\ell={f(b)-f(a)\over b-a}.
\]
The function $f'$ being non-increasing on $(a,b)$,
\[
h(c)\le \left[(f'(a+)-\ell)(c-a)\right]\wedge \left[(\ell-f'(b-))(b-c)\right]
\]
and consequently, 
\begin{eqnarray*}
h(c)&\le& \left[\left(f'(a+)-\ell\right)(c-a){b-c\over b-a}\right]+\left[\left(\ell-f'(b-)\right)(b-c){c-a\over b-a}\right]\\
&=&  {(c-a)(b-c)\over b-a}\left[f'(a+)-\ell+\ell-f'(b-)\right]\\
&=& (b-a)\left[{c-a\over b-a}\left(1-{c-a\over b-a}\right)\right]\left[f'(a+)-f'(b-)\right]\;\;\le\;\;
 \frac{b-a}{4}V_{(a,b)}(f').
\end{eqnarray*}
The constant $1/4$ cannot be improved since it is reached for $f(x)=1-|x|$ on $[-1,1]$.
\end{proof}

Let $f'$ be the function of Lemma~\ref{L-conc} and $R>V_{(a,b)}(f')$. By Lemma~\ref{lem-debase}, one can partition $(a,b)$ into $K\le2D$ intervals $J_j$, $1\le j\le K$ of length not larger than $D^{-1}(b-a)$ with $V_{J_j}(f')< RD^{-1}$. Using this partition to approximate $f$ by a piecewise affine function $g_K$ with $K$ pieces and applying Lemma~\ref{L-conc}, we derive that
\[
\sup_{a\le x\le b}|f(x)-g_K(x)|\le(1/4)RD^{-1}[(b-a)/D]=(R/4)(b-a)D^{-2},
\]
hence
\[
\int_a^b|f(x)-g_K(x)|^2\,dx\le(R/4)^2(b-a)^3D^{-4}.
\]
Note that, by construction, $g_K$ is concave on $[a,b]$ if $f$ is and $g_K$ is convex in the opposite case.
Since $\sqrt{t}$ satisfies the assumptions of Lemma~\ref{L-conc} on each
of the $k$ non-extremal intervals of the partition $\II$ that defines $t$ and is zero on the two extremal intervals, we may use the previous approximation method on each non-extremal interval with $f=\sqrt{t}$ to get an approximation $v$ of $\sqrt{t}$ with $D'=2\sum_{j=1}^kD_j$ pieces and such that
\[
\left\|\sqrt{t}-v\right\|^2\le\frac{1}{16}\sum_{j=1}^kR_j^2[\ell(I_j)]^3D_j^{-4}\quad\mbox{if}\quad
R_j>V_{I_j}\left(\left(\sqrt{t}\right)'\right)\mbox{ for }1\le j\le k.
\]
Renormalizing $v$ as in Lemma~\ref{L-Hell}, we conclude that there exists $u$ which belongs to $\overline V_1(D')\cap\FF^1_{k+2}$ and
\[
h^2(t,u)\le M=\frac{1}{16}\sum_{j=1}^kR_j^2[\ell(I_j)]^3D_j^{-4}.
\]
We now mimic the proof of Proposition~\ref{P-approkM} to optimize the $D_j$ and get
\[
D_j=\left\lceil D\left[\sum_{j=1}^k\left([\ell(I_j)]^3R_j^2\right)^{1/5}\right]^{-1}\left([\ell(I_j)]^3R_j^2\right)^{1/5}\right\rceil,
\]
so that finally $\sum_{j=1}^{k}D_j\le D+k$ and
\[
M\le\frac{1}{16D^4}\left[\sum_{j=1}^k\left([\ell(I_j)]^3R_j^2\right)^{1/5}\right]^{4}
\left[\sum_{j=1}^k\left([\ell(I_j)]^3R_j^2\right)^{1/5}\right]=\frac{1}{16D^4}
\left[\sum_{j=1}^k\left([\ell(I_j)]^3R_j^2\right)^{1/5}\right]^5.
\]
The corresponding function $u$ belongs to $\overline{V}_1\!\left(2(D+k)\strut\right)\cap\FF^1_{k+2}$ so that
\[
h^2\!\left(t,\overline{V}_1\!\left(2(D+k)\cap\FF^1_{k+2}\right)\right)\le\frac{1}{16D^4}\left[\sum_{j=1}^k\left([\ell(I_j)]^3R_j^2\right)^{1/5}\right]^5.
\]
The conclusion follows by letting $R_j$ converge to $V_{I_j}\left(\left(\sqrt{t}\right)'\right)$ for each $j$.

\end{document}